\let\oldtocsection=\tocsection
\renewcommand{\tocsection}[2]{\hspace{0em}\scriptsize \bfseries \oldtocsection{#1}{#2}}
\newcommand{\inv}{^{-1}}
\newtheorem{theorem}{Theorem}[section]
\newtheorem{corollary}[theorem]{Corollary}
\newtheorem{lemma}[theorem]{Lemma}
\theoremstyle{definition}
\newtheorem{remark}[theorem]{Remark}
\newtheorem*{openproblem}{Open problem}
\renewcommand{\Re}{\operatorname{Re}}
\renewcommand{\Im}{\operatorname{Im}}
\newcommand{\N}{\mathbb{N}}
\newcommand{\R}{\mathbb{R}}
\newcommand{\C}{\mathbb{C}}
\numberwithin{equation}{section}
\begin{document}
\title[A general quantified Ingham-Karamata Tauberian theorem]{A general quantified Ingham-Karamata Tauberian theorem}

\author[G. Debruyne]{Gregory Debruyne}
\thanks{G. Debruyne gratefully acknowledges support by a senior postdoctoral fellowship of Research--Foundation Flanders through the grant number 1249624N}
\address{G. Debruyne\\ Department of Mathematics: Analysis, Logic and Discrete Mathematics \\ Ghent University\\ Krijgslaan 281 \\ B 9000 Gent\\ Belgium}
\email{gregory.debruyne@UGent.be}
\subjclass[2020]{Primary  40E05; Secondary 11M45, 44A10.}
\keywords{complex Tauberians; Ingham-Karamata theorem; Laplace transform; exact behavior; quantified; optimality; flexible one-sided Tauberian condition; Wiener-Ikehara theorem}

\begin{abstract} We provide a general quantified Ingham-Karamata Tauberian theorem with a flexible one-sided Tauberian condition under several types of boundary behavior for the Laplace transform. Our results in particular improve a theorem by Stahn, removing a vexing restriction on the growth of the Laplace transform. Improving existing optimality results, we also show that the obtained quantified rate is optimal in almost all cases.  
\end{abstract}

\maketitle

\tableofcontents

\section{Introduction} 

Tauberian theory occupies a central role in modern mathematical analysis, retrieving asymptotic information of an object from information about its integral transforms. It has widespread utility in versatile domains such as number theory \cite{Tenenbaumbook}, complex analysis \cite{titchmarsh}, harmonic analysis \cite{rudin}, spectral theory \cite{shubin2001}, probability theory \cite{binghambook}, operator theory \cite{batty-seifert} and differential equations \cite{a-b-h-n, b-d}, among many others \cite{korevaarbook}.  

The Ingham-Karamata theorem \cite{ingham1935,karamata1934} is a jewel of Tauberian theory and has come to be regarded as one of the landmarks of 20th century analysis \cite{c-q, lax-zalcman}. One of its many versions reads as follows. 

\begin{theorem}[Ingham, Karamata] \label{thoriginal} Let $S: [0,\infty) \rightarrow \mathbb{C}$ be a Lipschitz continuous function. If the Laplace transform $\mathcal{L}\{S;s\} := \int^\infty_{0} e^{-su} S(u) \dif u$, initially convergent on the half-plane $\Re \: s > 0$, admits a continuous extension to the line $\Re \: s = 0$, then
\begin{equation}
  S(x) = o(1), \ \ \ x \rightarrow \infty.
\end{equation} 
\end{theorem}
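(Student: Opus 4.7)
The plan is to reduce the Lipschitz statement to an Ingham-Karamata theorem for bounded functions and then settle the latter with a Newman-style contour argument. Since $S$ is Lipschitz, its almost-everywhere derivative $f := S'$ lies in $L^\infty(0,\infty)$. The linear growth $|S(u)| = O(u)$ justifies the integration by parts
\[
\mathcal{L}\{f;s\} = s\,\mathcal{L}\{S;s\} - S(0), \qquad \Re s > 0,
\]
and the right-hand side inherits a continuous extension $h$ to $\Re s \geq 0$ from the hypothesis, with $h(0) = -S(0)$. Because $S(T) = S(0) + \int_0^T f(u)\,\dif u$, the conclusion $S(T) = o(1)$ will follow once it is shown that $\int_0^T f(u)\,\dif u \to h(0)$ as $T\to\infty$.

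To prove the latter, set $g_T(s) := \int_0^T e^{-su} f(u)\,\dif u$, which is entire, and aim at $g_T(0) \to h(0)$. Fix a large radius $R$ and apply Cauchy's theorem to the integrand $(h(s) - g_T(s))\,e^{sT}(1/s + s/R^2)$ on the half-disc $\{|s|\leq R,\,\Re s\geq 0\}$, with a small semicircular indentation around $s=0$ to absorb the residue. The resulting identity expresses $g_T(0) - h(0)$ as integrals over the right semicircle $\Gamma_+ = \{|s|=R,\,\Re s \geq 0\}$ and over the imaginary segment $\Gamma_0 = \{it : |t|\leq R\}$. On $\Gamma_+$, the bounds $|e^{sT}(h(s) - g_T(s))| \leq \|f\|_\infty/\Re s$ (from boundedness of $f$) and $|1/s + s/R^2| = 2\Re s/R^2$ (valid on $|s|=R$) combine to yield a contribution of order $\|f\|_\infty/R$. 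The $g_T$-part of the $\Gamma_0$-integral may be deformed to the left semicircle $\Gamma_- = \{|s|=R,\,\Re s \leq 0\}$, because $g_T$ is entire, and handled by an analogous estimate. The remaining $h$-part equals $\int_{-R}^R h(it)\,e^{itT}(1/(it) + it/R^2)\,\dif t$, an oscillatory integral with continuous density, which tends to $0$ as $T\to\infty$ by the Riemann-Lebesgue lemma.

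The main obstacle is that Newman's classical argument assumed \emph{analytic} -- not merely continuous -- extension of $\mathcal{L}\{S;\cdot\}$, so that the contour could be shifted slightly into the left half-plane and the imaginary segment bypassed altogether. Under the weaker, purely continuous hypothesis adopted here, the contour must stay pinned to the imaginary axis and the residue-type deformation on the $h$-side must be replaced by the oscillatory Riemann-Lebesgue argument above. The proof consequently hinges on a careful order of quantifiers: first fix $R$ large enough that the semicircular contributions fall below $\varepsilon/2$ uniformly in $T$, and only then let $T\to\infty$ to drive the imaginary-axis integral below $\varepsilon/2$. Combining the two estimates yields $g_T(0) \to h(0)$, whence $S(T) \to 0$.
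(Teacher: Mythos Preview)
Your reduction to $f=S'\in L^\infty$ and the integration-by-parts identity are fine, and attempting to run Newman's contour argument under a merely continuous (rather than analytic) boundary hypothesis is a natural idea. But the step where you invoke Riemann--Lebesgue for the $h$-part on $\Gamma_0$ contains a real gap: the density $h(it)\bigl(1/(it)+it/R^2\bigr)$ is \emph{not} continuous at $t=0$, because the kernel $1/s+s/R^2$ has a simple pole there and $h(0)=-S(0)$ has no reason to vanish. The integrand is therefore not in $L^1(-R,R)$ and Riemann--Lebesgue does not apply. Worse, if one interprets the integral as a principal value (which the indentation naturally produces), it does \emph{not} tend to $0$ as $T\to\infty$: the contribution from a neighbourhood of $t=0$ is governed by $\int_0^{\delta T}\sin u/u\,\dif u\to\pi/2$ and yields a persistent term proportional to $h(0)$. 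This is exactly the difficulty your adaptation creates. In Newman's original argument the analytic-extension hypothesis lets the left part of the contour pass strictly into $\Re s<0$, so the pole at $s=0$ sits in the interior and is captured as a full residue; pinning the contour to the imaginary axis moves the pole onto the boundary, where mere continuity of $h$ is too weak to control $(h(it)-h(0))/t$. The same pole also interferes with your deformation of the $g_T$-part from $\Gamma_0$ to $\Gamma_-$, since $s=0$ lies between those two curves and contributes a residue you have not accounted for.

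The paper does not prove Theorem~\ref{thoriginal} directly (it is quoted as classical), but its Fourier-based methodology---Lemma~\ref{lemtaubarg2} together with the translation in Section~\ref{sqikexact} and the illustration in Section~\ref{qiksecunik}---would yield it and sidesteps the obstacle entirely. One convolves $S$ with $\phi_\lambda(y)=\lambda\phi(\lambda y)$ where $\hat\phi\in\mathcal{D}(-1,1)$; the Lipschitz condition gives $\bigl|S(x)-\lambda\int S(x+y)\phi(\lambda y)\,\dif y\bigr|=O(1/\lambda)$, while the convolution equals $(2\pi)^{-1}\int_{-\lambda}^{\lambda}g(t)e^{ixt}\hat\phi(-t/\lambda)\,\dif t$ with $g$ the continuous boundary function. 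This is an integral over a compact interval with bounded continuous density, so Riemann--Lebesgue applies cleanly and the convolution tends to $0$ as $x\to\infty$; letting then $\lambda\to\infty$ finishes the proof. The compactly supported $\hat\phi$ is precisely a pole-free substitute for your kernel $1/s+s/R^2$, and that is the missing ingredient in your argument.
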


Next to the assumption on the Laplace transform, the theorem has a Lipschitz regularity hypothesis on the function $S$, without which the theorem becomes false. This feature is paramount in Tauberian theory and such a regularity hypothesis on the function is called a \emph{Tauberian condition}. Karamata actually showed Theorem \ref{thoriginal} under a more general \emph{one-sided Tauberian condition}, but with a much more complicated proof. In fact, up until now one-sided Tauberian conditions have been notoriously difficult to handle, see e.g. \cite[Remark 3.2]{Debruyne-VindasComplexTauberians}. An important objective of this paper is to provide a new method that allows for an easier treatment of one-sided Tauberian conditions. 

Since Newman in 1980 popularized the Ingham-Karamata theorem with his short proof of the prime number theorem \cite{newman}, see also the presentations by Korevaar \cite{korevaar1982} and Zagier \cite{zagier1997}, this Tauberian theorem has attracted renewed interest which has led to the discovery of many new applications. The $C_0$-semigroup community in particular has in the last decades actively stimulated research in Tauberian theory after the realization the Ingham-Karamata theorem may be invoked to study the abstract Cauchy problem \cite{a-b, a-b-h-n}, but there are also recent applications in other fields \cite{d-v-l1}.

Fairly recently \cite{b-c-t, b-d}, the $C_0$-semigroup community shifted its attention to \emph{quantified versions} of the Ingham-Karamata theorem as it has consequences for decay rates of classical solutions of the abstract Cauchy problem, a textbook example being the study of \emph{damped wave equations} \cite{b-b-t}. The best result currently available is due to Stahn \cite{stahn}, which we state for scalar-valued functions. We use the Vinogradov notation $f(x) \ll g(x)$ indicating the existence of a constant $C$ such that $|f(x)| \leq Cg(x)$ in the domains of $f$ and $g$. If the constant $C$ depends on an external parameter, it is incorporated in the notation. For convenience of writing, we shall say throughout this work that a function $g$ is analytic on a closed domain if it is analytic on its interior and admits a continuous extension to the boundary of that domain.

\begin{theorem}[Stahn] \label{thqikstahn} Let $S: [0,\infty) \rightarrow \mathbb{C}$ be a Lipschitz continuous function and $M,K: \R_{+}  \rightarrow (0,\infty)$ two continuous non-decreasing functions for which there exists $\varepsilon \in (0,1)$ such that
\begin{equation} \label{eqqikcondstahn}
 K(t) \ll \exp\big(\exp\big((tM(t))^{1-\varepsilon}\big)\big),\quad t\to\infty.
\end{equation}

 If $\mathcal{L}\{S;s\}$ admits an analytic extension to the region
\begin{equation} \label{eq:defomega}
 \Omega_{M} := \{s : |\Re s| \leq 1/M(|\Im s|) \},
\end{equation} 
where $|\mathcal{L}\{S;s\}| \ll K(|s|)/|s|$ as $|s| \rightarrow \infty$, then
\begin{equation} \label{qikeqmklogstahn}
 S(x) \ll_{S,M,K} \frac{1}{M_{K,\log}^{-1}(x)}, \quad x \rightarrow \infty,
\end{equation}
where $M_{K,\log}^{-1}$ is the inverse function of $M_{K,\log}(t) = M(t) (\log t + \log \log t + \log K(t))$.

Furthermore, if $K$ is of positive increase, that is, there exist $a,t_0 > 0$ such that $t^{-a}K(t) \ll R^{-a}K(R)$ for all $t_0 \leq t \leq R$ as $R \rightarrow \infty$, then
\begin{equation} \label{eqqikmkstahn}
 S(x) \ll_{S,M,K} \frac{1}{M_{K}^{-1}(x)}, \quad x \rightarrow \infty,
\end{equation}
with $M_K^{-1}$ the inverse function of $M_{K}(t) = M(t) (\log t + \log K(t))$.
\end{theorem}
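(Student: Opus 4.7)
The plan is to prove Theorem \ref{thqikstahn} by a Newman-style contour integration adapted to the region $\Omega_M$, followed by an optimization in the height of the contour. After the standard normalization ($S$ is $1$-Lipschitz with $S(0)=0$, so $S$ is bounded by Theorem \ref{thoriginal}), I would set up an integral representation of the form
\[
 S(x) = \frac{1}{2\pi i}\int_{\Gamma_T} g(s)\,e^{sx}\,\kappa(s)\,ds + E_T(x),
\]
where $g = \mathcal{L}\{S;\,\cdot\,\}$, $\Gamma_T$ is a closed contour depending on a height parameter $T$, $\kappa$ is a Newman-type kernel such as $(1 + s^2/T^2)/s$ (or a smoother higher-order variant if one wants to make full use of the Lipschitz regularity of $S$), and $E_T(x)$ is a harmless truncation error. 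The role of $\kappa$ is to create cancellation on the right portion of $\Gamma_T$, so that the contribution there is controlled by no more than the boundedness of $S$.

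I would then deform $\Gamma_T$ into a contour hugging the left boundary of $\Omega_M$ for $|\Im s| \le T$, and split it into three pieces: a left part $\Gamma_T^{\ell}$ running along $\Re s = -1/M(|\Im s|)$, horizontal closing segments $\Gamma_T^h$ at $|\Im s| = T$, and a right piece $\Gamma_T^r$ in the right half-plane. On $\Gamma_T^{\ell}$ the hypothesis $|g(s)| \ll K(|s|)/|s|$ combines with $|e^{sx}| = e^{-x/M(|\Im s|)}$ to bound the contribution by roughly $K(T)\,e^{-x/M(T)}\,\log T$, the logarithm arising from the $1/|\tau|$ factor; near the origin one uses a separate local bound. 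On $\Gamma_T^h$ the same growth estimate together with the short length $\le 2/M(T)$ gives a contribution of order $K(T)/T$, and on $\Gamma_T^r$ the Newman cancellation combined with boundedness of $S$ produces a contribution of order $1/T$.

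Summing and optimizing in $T$ should yield the stated rates. Balancing $K(T)\log T\,e^{-x/M(T)} \asymp 1/T$ rearranges to $x = M(T)(\log T + \log\log T + \log K(T)) = M_{K,\log}(T)$, giving $T = M_{K,\log}^{-1}(x)$ and hence \eqref{qikeqmklogstahn}. Under positive increase of $K$, the assumption $t^{-a}K(t) \ll R^{-a}K(R)$ allows one to dominate $\int_1^T K(\tau)/\tau\,d\tau$ by a constant multiple of $K(T)$, thereby removing the $\log T$ factor on $\Gamma_T^{\ell}$; the sharper balance $K(T)e^{-x/M(T)} \asymp 1/T$ then solves to $x = M(T)(\log T + \log K(T)) = M_K(T)$, yielding \eqref{eqqikmkstahn}. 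The Lipschitz hypothesis finally transfers the estimate from the smoothed value produced by $\kappa$ back to $S(x)$ itself.

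The main obstacle is the growth hypothesis \eqref{eqqikcondstahn} on $K$. It enters when justifying the deformation and, more critically, when controlling $\Gamma_T^h$: along the portion of those segments lying in the right half-plane, the factor $|e^{sx}|$ can be as large as $e^{x/M(T)}$, and pairing this against $|g(s)| \ll K(T)/T$ leaves a residue that the argument can only absorb if $K$ grows no faster than an iterated exponential of the form \eqref{eqqikcondstahn}. Removing this restriction---one of the stated improvements of the present paper---seems to require either a contour that avoids horizontal closure entirely, or a symmetrized/mollified representation in which the right-half-plane portion of the closing segments contributes negligibly; this is where I anticipate the principal difficulty in any refinement of the theorem above.
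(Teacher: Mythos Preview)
Your outline follows Stahn's original complex-analytic strategy---a Newman-type contour deformed into $\Omega_M$, with the Lipschitz condition controlling the right half-plane and the bound $K(|s|)/|s|$ the left---and your diagnosis of where \eqref{eqqikcondstahn} enters (the horizontal closing segments, where $e^{sx}$ is undamped) is accurate. The paper takes a genuinely different route: it derives Stahn's conclusion as the special case $F(x)=Cx$ of Theorem~\ref{coran} via a purely Fourier-analytic argument, with no contour integration at all. The Tauberian condition is handled by Lemma~\ref{lemtaubarg2}, which bounds $S(x)$ between two convolutions $\lambda\int S(x+y)\phi(\pm\lambda y)\,dy$ using test functions with $y\phi(y)\ge 0$; the analytic hypothesis is then exploited on the Fourier side by passing to Cauchy estimates $|g^{(n)}(t)|\ll n^n e^{-n}G(t)^n\tilde H(t)$, integrating by parts $n$ times against a \emph{sequence} $\hat\phi_n\in\mathcal{D}(-1,1)$ of test functions from Lemma~\ref{lemanalyticcuts} satisfying $|\hat\phi_n^{(j)}|\le CA^j n^j$ for $j\le n$, and optimizing over $n$. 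Because the argument lives entirely on the line $\Re s=0$ and no contour enters the right half-plane, there are no closing segments and \eqref{eqqikcondstahn} becomes unnecessary (only the mild regular-growth condition \eqref{eqqikspecreggrowth} survives). Your contour approach is direct and recovers exactly Stahn's theorem as stated; the paper's method removes \eqref{eqqikcondstahn}, treats one-sided Tauberian conditions with no extra work, and extends immediately to boundary hypotheses ($C^N$, H\"older, ultradifferentiable) where no analytic continuation exists and contour deformation is unavailable.
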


We refer to appendix \ref{rfrappendix} what this entails for some concrete instances of $M$ and $K$. In \cite{DebruyneSeifert2} it was shown that one cannot improve the $M_K^{-1}$-estimate in Theorem \ref{thqikstahn} if $M_K(t) \ll e^{\alpha t}$ for some $\alpha > 0$, improving upon earlier optimality results from \cite{b-t, DebruyneSeifert1, stahn}. Furthermore, in \cite{stahn} the question was raised whether the restriction \eqref{eqqikcondstahn} is optimal in a suitable sense. The answer to this question is one of the main results of this article. We show that, at the cost of only a very mild regularity assumption, see \eqref{eqqikspecreggrowth}, one may entirely remove the hypothesis \eqref{eqqikcondstahn}. Additionally, we also improve the decay rate in Theorem \ref{thqikstahn} in some instances when $K$ is not of positive increase, for example in the important case $M(t) = K(t) = \max\{1,\log t\}$. Another main result of this paper is that we shall drastically weaken the restriction $M_K(t) \ll e^{\alpha t}$ for the optimality theorem.

Stahn's theorem, tailored for the application in the abstract Cauchy problem, has a few defects that somewhat inhibit its application in other fields. In particular, the lack of a one-sided Tauberian condition limits its use in number theory, where summatory functions of non-negative arithmetic functions are often studied, or even in probability theory, where the distribution functions must be non-decreasing.

The main goal of this paper is to devise a general quantified version of the Ingham-Karamata theorem that is applicable in a wide array of situations, in an attempt to bridge the gap between the versatile consumers of Tauberian theorems. Our main theorem is stated in section \ref{secstatemain}. Specifically, our study is centered around the following aspects.
\begin{enumerate}
 \item \textbf{Flexible one-sided Tauberian conditions}: We shall work under the Tauberian condition that $S(x) + F(x)$ is non-decreasing, where $F$ is a function that should satisfy a few mild regularity hypotheses, see section \ref{sectaubcon}. The function $F$ may be adapted depending on the application at hand and internalizes the desired flexibility. Of course the eventual rate in the quantified Tauberian theorem will inevitably depend on $F$. For example, if one wishes a one-sided version of Theorem \ref{thoriginal}, one may select $F$ to be $F(x) = Cx$, for some positive constant $C$. Our Tauberian condition covers in particular $S'(x) \geq - f(x)$ where the function function $f$ is reasonably flexible.
 
 \item \textbf{Various singularities on the Laplace transform}: A major advantage of our flexible Tauberian condition is that one may treat certain singularities at once. Concretely, if one wishes to find the asymptotic behavior of a non-decreasing function $S$, say, one should determine $F$ depending on the singularities, such that the Laplace transform $\mathcal{L}\{S - F; s\}$ does admit acceptable boundary behavior to apply a Tauberian theorem to $S - F$. The singularities of $\mathcal{L}\{S; s\}$ effectively describe the main terms of the asymptotic expansion of $S$. In this way, one may for example treat poles $s^{-k}$ corresponding to $F(x) = x^{k-1}/(k-1)!$, but one can also treat other wilder singularities, see section \ref{sectaubcon}.
 
Sometimes, such as in the \emph{Selberg-Delange method} \cite[Ch. II.5]{Tenenbaumbook} where one considers singularities related to $\zeta(s)^z$, $z$ being a complex number, the singularity must be expressed as an expansion. In our framework the function $F$ should encode a truncation of this expansion. Thus, an \emph{effective}\footnote{The location of the truncation in $F$ should be allowed to depend on the variable $x$ (from Theorem \ref{thqikstahn}).} Tauberian approach with a flexible one-sided Tauberian condition could serve as an alternative to the traditional Selberg-Delange method, but with the advantage that the boundary behavior of the Laplace transform may now be different from analyticity or differentiability assumptions. 
  
 \item \textbf{New boundary behavior of the Laplace transform}: Quantified versions of the Ingham-Karamata theorem thus far have featured mostly either analytic continuation or $C^{k}$-differentiability assumptions \cite{chill-seifert} on the Laplace transform. We shall also consider fractionally differentiable boundary behavior (H\"older continuity) and ultradifferentiable boundary behavior, a notion connecting $C^{\infty}$-functions and real analytic functions, see section \ref{qiksecprelim}. We also consider weighted $L^{p}$-variants related to these notions. These new boundary assumptions arise naturally in plenty of contexts (cf. e.g. \cite{kahane1, Malliavin}). 
 
 \item \textbf{Optimality}: The optimality of the obtained rates plays a central role in this work. We show these are optimal in almost all cases. Roughly speaking, we rigorously show optimality except under two specific circumstances. The first exception is when the bounds for the derivatives of the Laplace transform are very strong, close to being integrable. We remedy this by introducing, from the point of view of the Tauberian problem, a more appropriate notion of boundary behavior in terms of $L^1$-estimates for which one is able to always show optimality. The second exception is when the class of ultradifferentiability is close (or equal) to being analytic. Then there arises an extra restriction for the bound of the Laplace transform in terms of the shape of the region of analytic continuation or what corresponds to these notions in the ultradifferentiable setting. This extra restriction dissolves fairly rapidly as the class of ultradifferentiability strays further from encoding analyticity, see section \ref{qikremopti}. We establish in particular the optimality of the $M_K$-estimate in Theorem \ref{thqikstahn} in a broader range than the currently known $M_K(t) \ll \exp(\alpha t)$. Curiously, our new range is governed by a condition that is similar to and even slightly weaker than \eqref{eqqikcondstahn}, see \eqref{eqqikoptico}. 
 
 Nevertheless, even in the few cases where we do not quite reach optimality, we do establish a barrier for the best possible rate in our Tauberian theorem that is often fairly close to the one we actually achieve.   
 
 \item \textbf{Overall improvements}: As already mentioned, we improve the decay rate \eqref{qikeqmklogstahn} in Theorem \ref{thqikstahn} in a number of cases and omit the restriction \eqref{eqqikcondstahn} at the cost of only a mild regularity assumption.
\end{enumerate}

As a side-product of our investigations we also obtain the \emph{exact behavior} of the quantified Ingham-Karamata theorem, that is, an \emph{if-and-only-if}-theorem in the sense that the conclusion of the Tauberian theorem also implies that the Laplace transform must admit this exact behavior. This type of investigation was initiated by Korevaar in \cite{korevaar2005} where he, in a quest for a solution of the twin prime conjecture, established so-called \emph{local pseudofunction boundary behavior} as the exact behavior of the Wiener-Ikehara Tauberian theorem. A few years later local pseudofunction boundary behavior was also established as the exact behavior for the unquantified Ingham-Karamata theorem \cite{Debruyne-VindasComplexTauberians}. We mention \cite{Debruyne-VindasGeneralWI, olsen, zhang2014, zhang2019} for some alternative formulations of exact behavior in the Wiener-Ikehara Tauberian theorem. Exact behavior is especially useful in Beurling prime number theory \cite{Debruyne-VindasPNTEquivalences, d-v-l1} where the objects have very little a priori structure and where the better behavior that is otherwise assumed on the Laplace transform may not always be available. Yet these considerations may also be useful in the development of a convolution theory where the functions only admit one-sided estimates (corresponding to our flexible Tauberian condition) or in other domains where the integral transforms do not readily admit analytic continuations.

As a small sample result of our study we give the following theorem.
\begin{theorem}  \label{coran} Let $S: [0,\infty) \rightarrow \R$ be a function such that $S(x) + Cx$ is non-decreasing for some constant $C > 0$. Let $M, K$ be continuous non-decreasing functions on $\mathbb{R}_{+}$ such that $\mathcal{L}\{S;s\}$ admits an analytic extension to $\Omega_{M} $ where it is bounded by $K(\left|s\right|)/|s|$ as $|s| \rightarrow \infty$. Then, for any $c < 1$,
\begin{equation} \label{equsefulresult}
 S(x) \ll_{S,M,K,c} \frac{1}{M_{K,\log}\inv(cx)}, \ \ \ x\rightarrow \infty.
\end{equation} \label{eqqikspecr}
Furthermore, if $M_{K,\log}$ is of $M(t)t/675$-regular growth, that is, there exist $C',t_0 > 0$ such that
\begin{equation}  \label{eqqikspecreggrowth}
 \frac{M_{K,\log}(C't)}{M_{K,\log}(t)} \geq 1 + \frac{675}{M(t) t}, \ \ \ t \geq t_0,
\end{equation}
then \eqref{equsefulresult} holds with $c = 1$. 

If, additionally, $K(t)$ is of positive increase, see \eqref{eqqikpi}, or $M(t)/\log^{\beta} t$ is eventually non-decreasing for some $\beta > 0$ and $M_{K,\log}$ is of $M(t)t/1350$-regular growth, then
\begin{equation} \label{eqmkest}
 S(x) \ll_{S,M,K} \frac{1}{M_{K}\inv(x)}, \ \ \ x\rightarrow \infty.
\end{equation}

Moreover, if for some $C'' > 0$,
\begin{equation} \label{eqqikoptico} K(t) \leq \exp(\exp(C'' tM(t))), \ \ \ t \rightarrow \infty,
\end{equation}
then it is impossible to improve the decay rate in \eqref{equsefulresult} beyond $M_{K}\inv(x)\inv$.
\end{theorem}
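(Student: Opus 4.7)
The plan is to derive Theorem \ref{coran} as a specialization of the paper's general main theorem (section \ref{secstatemain}) applied to $F(x) = Cx$ in the flexible one-sided Tauberian condition, together with the analytic-continuation boundary behavior. First I would verify that $F(x) = Cx$ satisfies the mild regularity hypotheses on $F$ spelled out in section \ref{sectaubcon}; this is essentially immediate since $F$ is smooth, monotone, and has explicit Laplace transform $C/s^{2}$, so that $\mathcal{L}\{S - F;s\} = \mathcal{L}\{S;s\} - C/s^{2}$ still extends analytically to $\Omega_{M}$ with the same $K(|s|)/|s|$ bound (after harmlessly absorbing $C/s^{2}$ for large $|s|$ and handling small $|s|$ separately).

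The core estimate then comes from a Newman/Korevaar-style contour integral representation of the form
\begin{equation*}
  S(x) = \frac{1}{2\pi i}\int_{\Gamma} e^{sx}\,\mathcal{L}\{S - F;s\}\,\phi(s)\,ds + F(x) + (\text{error}),
\end{equation*}
where $\Gamma$ hugs the boundary $\Re s = -1/M(|\Im s|)$ of $\Omega_{M}$ up to height $T$ and closes off near $\Re s = 0$, and $\phi$ is a smoothing kernel adapted to the paper's new method for one-sided Tauberian conditions. On the deformed piece ($|\Im s|\leq T$) the analytic bound $K(|s|)/|s|$ combined with the gain $e^{-x/M(T)}$ from the contour depth yields an estimate of order $K(T) e^{-x/M(T)}$; on the tail ($|\Im s|>T$) the monotonicity of $S+Cx$ converts the oscillatory integral into a telescoping $1/T$ bound (the loss dictated by $F$ is benign for $F(x)=Cx$). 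Optimizing in $T$ produces the balance $x \asymp M(T)\bigl(\log T + \log\log T + \log K(T)\bigr)$, i.e.\ $T \asymp M_{K,\log}^{-1}(x)$, giving \eqref{equsefulresult} with any $c<1$; the slack factor $c$ absorbs lower-order terms, and the regular-growth hypothesis \eqref{eqqikspecreggrowth} is exactly what lets us propagate the $1/T$ estimate through the inversion and recover $c=1$.

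The sharpened bound \eqref{eqmkest} amounts to showing that the $\log\log T$ summand in $M_{K,\log}$ is redundant. If $K$ is of positive increase, then $\log\log t \ll \log K(t)$ and the term is absorbed directly into $\log K$. In the alternative regime where $M(t)/\log^{\beta} t$ is eventually non-decreasing, a dyadic comparison shows that the $M(T)\log\log T$ contribution in the balance is dominated by $M(T)\log T$, and the stronger $M(t)t/1350$-regular growth supplies enough elbow room to complete the inversion. The main technical obstacle here is keeping the constants in \eqref{eqqikspecreggrowth} explicit: the numerical constants $675$ and $1350$ trace back to specific kernel integrals in the contour estimate, so pushing through a genuinely quantitative (rather than purely asymptotic) bookkeeping is the delicate step.

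Finally, the optimality claim under \eqref{eqqikoptico} is established by an explicit construction, adapting the technique of \cite{DebruyneSeifert2}---where optimality was shown only for $M_K(t)\ll e^{\alpha t}$---to the broader range now available. The key observation is that the condition $K(t)\leq \exp(\exp(C'' tM(t)))$ still permits building an analytic $G$ on $\Omega_{M}$, bounded by $K(|s|)/|s|$, as an absolutely convergent sum $G(s)=\sum_{n} a_n/(s-s_n)$ of elementary singularities $s_n$ pushed onto $\partial\Omega_{M}$, with coefficients $a_n$ tuned so that the inverse Laplace transform $S$ satisfies one-sided monotonicity (a signed superposition of exponentials with sufficient separation) yet oscillates at amplitude $\gtrsim 1/M_{K}^{-1}(x)$. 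The hardest technical point is enforcing all three constraints simultaneously---one-sidedness of $S+Cx$, the $K$-bound on $G$, and the lower oscillation bound---in the wider parameter range of \eqref{eqqikoptico}, which requires a finer calibration of the $a_n$ and $s_n$ than the exponential regime does.
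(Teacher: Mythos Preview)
Your outline diverges from the paper on two fronts, and one of them is a genuine gap.

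\textbf{Method for the upper bound.} You sketch a Newman/Korevaar contour argument. The paper deliberately avoids this: its proof is purely Fourier analytic. The Tauberian step is Lemma~\ref{lemtaubarg2}, whose point is that if $y\phi(y)\ge 0$ and $\int\phi=1$, then the one-sided monotonicity of $S+Cx$ gives $S(x)$ sandwiched between two convolutions $S\ast\lambda\phi(\pm\lambda\,\cdot)$ up to $Cf(x)/\lambda$. The convolutions are then rewritten as $\langle g(t),e^{ixt}\hat\phi(\mp t/\lambda)\rangle$ and estimated using the analytic bound via Cauchy's formula (reducing $\mathcal{T}_{\mathrm{An}}$ to $\mathcal{T}_{\mathrm{SA}}$). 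Crucially, $\hat\phi$ must be compactly supported \emph{and} have derivative bounds mimicking analyticity; no single $\phi$ can do this, so the paper uses the sequence $\phi_n$ of Lemma~\ref{lemanalyticcuts} satisfying $|\hat\phi_n^{(j)}|\le CA^j n^j$ for $j\le n$. This device is what removes Stahn's restriction~\eqref{eqqikcondstahn}, and your contour sketch does not indicate how you would avoid that restriction. Your ``telescoping $1/T$'' treatment of the one-sided condition on the tail is also not a known mechanism; contour methods (Stahn) use two-sided Lipschitz bounds there. The constants $675$ and $1350$ in the paper come from $2eA$ with $A=124$ (Lemma~\ref{lemanalyticcuts}), not from contour integrals.

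\textbf{Optimality.} This is where your plan has a real gap. You propose an explicit construction $G(s)=\sum a_n/(s-s_n)$ with controlled one-sided monotonicity and prescribed oscillation. The paper does \emph{not} construct counterexamples; it uses an open mapping theorem argument (Theorem~\ref{thrfrsop}, Corollary~\ref{coroptian}) that is explicitly ineffective. The paper even remarks that the constructive route (Borichev--Tomilov, Stahn) has not been pushed to the generality of~\eqref{eqqikoptico}, only the functional-analytic one has. Concretely: one puts all admissible $S$ into a Fr\'echet space $V_1$, observes that if $W$ were an admissible rate then $V_1=V_2$ as sets, applies the open mapping theorem to get a uniform bound $W(x)|S(x)|\ll\|S\|_{1,\nu_0,m_0}$, and then tests this against the family $S_{k,x,\lambda}(y)=h_k(\lambda(x-y))\chi_{(0,\infty)}(y)$ with $h_k$ from \cite{DebruyneSeifert2}. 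The extension from $M_K(t)\ll e^{\alpha t}$ to~\eqref{eqqikoptico} comes from exploiting the analyticity of $\hat h_k$ near the origin (property~(\ref{qikpropseqdeczero})), not from a new construction. Your proposed explicit construction, especially the simultaneous enforcement of one-sided monotonicity and the $\Omega(1/M_K^{-1}(x))$ oscillation under~\eqref{eqqikoptico}, is not something the existing literature supplies, and you give no indication of how to carry it out.
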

We show Theorem \ref{coran} in section \ref{qiksecanal}, but the final optimality statement is given in Corollary \ref{coroptian} in section \ref{qikremopti}. We emphasize that with some more involved notation the techniques from this paper can provide an explicit description\footnote{The dependence of $S$ in \eqref{equsefulresult} and \eqref{eqmkest} is solely to control the Laplace transform on compacts within $\Omega_M$ (and to absorb the dependence of $C$). It could be omitted if one had a uniform bound for $\mathcal{L}\{S;s\}$ (in terms of $K$) on the whole of $\Omega_M$ instead of just for $|s| \rightarrow \infty$, but the implicit constant in \eqref{equsefulresult} should then depend additionally on $C$.} (in terms of $S,M,K$ and $c$) of the implicit constants in \eqref{equsefulresult} and \eqref{eqmkest}, see our main Theorem \ref{thrikimain}. 

We observe that with our Tauberian condition it is also possible to generate Tauberian theorems of Wiener-Ikehara type. As an illustration and for convenience of the reader we record the simplest case. More general versions involving more flexible Tauberian conditions allowing one to treat other singularities are also possible, but as the Wiener-Ikehara theorem is not the main focus of this work, we decide to not pursue those here. We also mention \cite[Th. 7.13, pp. 335]{Tenenbaumbook} and \cite{revesz-roton} for some other quantified effective versions of the Wiener-Ikehara theorem, although the Laplace transform hypothesis is quite different from the one here.

\begin{theorem}[Quantified Wiener-Ikehara with simple pole] Let $\tau: [0,\infty) \rightarrow \mathbb{R}$ be a non-decreasing function whose Laplace transform $\int^{\infty}_{0} \tau(x) e^{-sx} \dif x$ is convergent on $\Re s > 1$ and there exists $\mathfrak{A}> 0$ such that $\mathcal{L}\{\tau;s-1\} - \mathfrak{A}/s$ admits an analytic extension to $\Omega_{M}$ where it satisfies the bound $K(\left|s\right|)/|s|$ as $|s| \rightarrow \infty$. Here $M,K$ are as in Theorem \ref{coran}. Then, for any $c < 1$,
\begin{equation}\label{qikequsewi} \tau(x) = \mathfrak{A}e^{x} + O_{\tau,M,K,c}(e^{x}M_{K,\log}\inv(cx)\inv), \ \ \ x \rightarrow \infty. 
\end{equation}
If $M_{K,\log}$ is of $tM(t)/675$-regular growth, then $c$ may equal $1$ in \eqref{qikequsewi}. 

Furthermore, if additionally $K(t)$ is of positive increase or $M(t)/\log^{\beta} t$ is eventually non-decreasing for some $\beta > 0$ with $M_{K,\log}$ being of $tM(t)/1350$-regular growth, then
\begin{equation} \label{qikequsewimkest}
 \tau(x) = \mathfrak{A}e^{x} + O_{\tau,M,K}(e^x M_{K}\inv(x)\inv), \ \ \ x\rightarrow \infty.
\end{equation}
\end{theorem}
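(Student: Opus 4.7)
The plan is to reduce the theorem to Theorem \ref{coran} via the standard Wiener--Ikehara substitution
\[
  S(x) := e^{-x}\tau(x) - \mathfrak{A}.
\]
A direct computation on $\Re s > 0$ gives $\mathcal{L}\{S;s\} = \mathcal{L}\{\tau;s+1\} - \mathfrak{A}/s$, which is precisely the function hypothesized to extend analytically to $\Omega_M$ with bound $K(|s|)/|s|$ as $|s|\to\infty$. Once Theorem \ref{coran} is applied to $S$, multiplying the resulting estimate by $e^x$ and rearranging $S(x) = e^{-x}\tau(x) - \mathfrak{A}$ reproduces \eqref{qikequsewi} and \eqref{qikequsewimkest}. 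The whole proof therefore collapses to verifying that $S$ satisfies the one-sided Tauberian condition of Theorem \ref{coran}, namely that $S(x) + Cx$ is non-decreasing for some $C > 0$.

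To verify this, observe that since $\tau$ is non-decreasing, the distributional derivative of $S$ satisfies $dS \ge -e^{-x}\tau(x)\,dx$, so it is enough to exhibit a constant $C$ with $\tau(x) \le Ce^x$ for all sufficiently large $x$. I would obtain this crude upper bound by appealing to the classical unquantified Wiener--Ikehara theorem: analyticity of $\mathcal{L}\{\tau;s+1\}-\mathfrak{A}/s$ on $\Omega_M$ is much stronger than the continuous boundary extension required by the classical statement, so the latter applies and yields $\tau(x)\sim\mathfrak{A}e^x$, in particular $\tau(x) \le (\mathfrak{A}+1)e^x$ eventually, whence $C = \mathfrak{A}+1$ works. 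If one prefers a self-contained route, a Newman-style contour argument applied to $\int_0^x e^{-u}\tau(u)\,du$, combined with the monotonicity of $\tau$ to upgrade an averaged estimate to a pointwise one, produces the same bound directly from the hypotheses.

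With the Tauberian condition in place, Theorem \ref{coran} applied to $S$ delivers all three conclusions at once: the estimate $S(x) \ll_{\tau,M,K,c} 1/M_{K,\log}\inv(cx)$ for each $c < 1$; the same estimate with $c = 1$ under the $tM(t)/675$-regular growth hypothesis on $M_{K,\log}$; and the sharper rate $S(x) \ll 1/M_K\inv(x)$ under the positive-increase hypothesis on $K$ or the logarithmic-domination hypothesis on $M$ combined with $tM(t)/1350$-regular growth. Multiplying through by $e^x$ and recalling the definition of $S$ gives \eqref{qikequsewi} and \eqref{qikequsewimkest}. The only substantive step outside of the machinery already developed in Theorem \ref{coran} is thus the a priori bound $\tau(x) \ll e^x$, and that is precisely where I expect the one mild obstacle to lie — though it lies comfortably within the reach of standard Wiener--Ikehara techniques.
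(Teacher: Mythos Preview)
Your proposal is correct and follows essentially the same route as the paper: define $S(x)=e^{-x}\tau(x)-\mathfrak{A}$, invoke the unquantified Wiener--Ikehara theorem to obtain the a priori bound $S(x)\ll 1$ (equivalently $\tau(x)\ll e^{x}$), use this together with the monotonicity of $\tau$ to verify that $S(x)+Cx$ is non-decreasing, and then apply Theorem~\ref{coran}. Your computation $\mathcal{L}\{S;s\}=\mathcal{L}\{\tau;s+1\}-\mathfrak{A}/s$ is in fact the correct one; the $s-1$ appearing in the paper's statement and proof is a typographical slip.
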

\begin{proof}[Deduction from Theorem \ref{coran}]
We let $S(x) := e^{-x} \tau(x) - \mathfrak{A}$ such that the Tauberian condition translates to $e^x(S(x) + \mathfrak{A})$ being non-decreasing. The Laplace transform of $S$ is $\mathcal{L}\{S;s\} = \mathcal{L}\{\tau; s-1\} - \mathfrak{A}/s$ and satisfies the hypotheses of Theorem \ref{coran}. It remains to verify that $S(x) + Cx$ is non-decreasing for a suitable $C$.

The traditional (unquantified) Wiener-Ikehara theorem (or a boundedness version \cite[Th. 3.1]{Debruyne-VindasComplexTauberians}) yields that $S(x) \ll_{\tau} 1$. This, together with the non-decrease of $e^x(S(x) +  \mathfrak{A})$, implies for $x,y > 0$ that
$$ S(x+y) -S(x) \geq (e^{-y} - 1)(S(x) + \mathfrak{A})) \gg_{\tau} -y,
$$
guaranteeing that $S(x) + Cx$ is non-decreasing for a suitable $C$.
\end{proof}

\medskip
Our work features a number of innovative methods that we anticipate should have utility in other Tauberian problems and even in other subjects within mathematical analysis. In contrast to \cite{stahn} where mainly complex analytic techniques were employed, our method shall be of pure Fourier analytic nature. We believe that a Fourier approach is more flexible, allowing us to treat more general behavior of the Laplace transform, but also providing a clean treatment of one-sided Tauberian conditions. An attractive feature of this technique is that it completely splits the analysis of the Tauberian condition and the analysis of the Laplace transform. Consequently, one may easily mix different Tauberian conditions with other types of behavior for the integral transform. For example, if in future investigations, one is able to deduce a corresponding Tauberian lemma under a flexible Wiener-Ikehara or gap-like Tauberian condition, one may combine it with our treatment of the behavior of the Laplace transform to obtain new Tauberian theorems. Vice versa, if one wishes to treat other boundary assumptions for the Laplace transform together with our flexible one-sided Tauberian condition, one may apply our Tauberian Lemma \ref{lemtaubarg2}. As a consequence, our investigations are also valuable in other contexts besides the Ingham-Karamata theorem. 

Our improvements largely stem from a good choice of test functions. For the Fourier method there is always a tension between the test function and its Fourier transform. Ideally both of these functions would have compact support; typically a compactly supported test function eases the analysis of the Tauberian condition while a compactly supported Fourier transform is beneficial for dealing with the behavior of the Laplace transform. Naturally, due to \emph{uncertainty principles} for the Fourier transform one cannot have both support conditions at the same time. In this work, we work with test functions whose Fourier transform have compact support and compromise on the Tauberian condition.

In order to handle one-sided Tauberian conditions efficiently, we shall select our test functions in such a way that $x\phi(x)$ is non-negative. Even though this new idea is fairly simple, it is surprisingly powerful, and significantly simplifies proofs of other important one-sided Tauberian theorems. We mention \cite{d-simpletauberian} where the author wrote an expository article based on this idea in a more simplified setting. 

The other main technical innovation in the paper is that, instead of selecting a single test function $\phi$ having the fastest possible decay that is admissible by the Denjoy-Carleman theorem---this would deliver the same restriction \eqref{eqqikcondstahn} as in Theorem \mbox{\ref{thqikstahn}---,} we select a \emph{sequence of test functions} pretending to be an analytic function with compact support; namely we choose a sequence of compactly supported test functions $\phi_n$ such that $\phi_n^{(n)}(t) \ll A^n n!$ (with some additional properties). If a single $\phi$ would obey these bounds on the derivatives, it must be analytic and therefore cannot be compactly supported. This construction is quite powerful and we anticipate it should have applications in plenty of other contexts within mathematical analysis as it can, sometimes, provide a rigorous proof of a heuristic argument requiring a function to be both analytic and compactly supported. The construction is based on Mandelbrojt's proof of the Denjoy-Carleman theorem \cite[Lemma 1.3.5]{hormander1990}, although the idea that such a sequence may fulfill the role of an analytic function is due to H\"ormander. Nevertheless, this technique appears not to be that well-known in a Tauberian context. 

For our optimality theorem, instead of constructing counterexamples which can be quite challenging in this general setting, we reason through an attractive argument based on the open mapping theorem. We learned this approach from \cite{ganelius}, where the idea is attributed to H\"ormander, but it seemed mostly forgotten until it resurfaced in Tauberian theory under the impulse of the author \cite{DebruyneSeifert1, DebruyneSeifert2, d-v-abswi} and his coauthors. The open mapping theorem turns the optimality question into a dual problem. Thus, as we appeal here to a Fourier method as well, we shall need \emph{dual test functions}. The test functions that we shall work with here were constructed in \cite{DebruyneSeifert2} and can be interpreted as some kind of dual sequence to the functions $\phi_n$ that will be constructed in the proof of our Tauberian theorem. These test functions $h_k$ are, roughly speaking, approximations of the indicator function $\chi_{E}$ where $E = \{t : |t| \geq 1\}$, are uniformly bounded and analytic in increasingly smaller strips around the real axis and have a zero of multiplicity $k$ at $0$, along with some other properties. As for the sequence $\phi_n$, we anticipate the sequence $h_k$ should also have applications in other areas. The core ideas in our argument are then related to \cite{DebruyneSeifert2}, but one however still requires a substantial amount of new technical work to apply these to our more general framework and to weaken the restriction $M_K(t) \ll e^{\alpha t}$ in the analytic case.

The proof of our Tauberian theorem is entirely \emph{effective}. Without always calculating explicit constants, we have made an effort to mention the dependence of the implicit constants on the external parameters in the Vinogradov symbol $\ll$ as these may be subject to optimization, depending on the application.  
On the other hand, our optimality result Theorem \ref{thrfrsop} is \emph{ineffective}, due to the application of the open mapping theorem.  

We formulate our results in terms of scalar-valued functions as we believe this is the most \emph{accessible} phrasing for a wide audience. It could be more challenging to recognize some other formulations that are specifically designed for concrete applications as the Ingham-Karamata Tauberian theorem. Moreover, one often even requires no substantial new input to establish these alternative formulations. For modern applications in $C_0$-semigroup theory for instance, one often applies a slightly modified form of the Ingham-Karamata theorem in terms of vector-valued functions, see Section \ref{secvec} for an argument how a vector-valued version of our Tauberian theorem can be deduced.

\medskip

The paper is centered around two main theorems: the Tauberian Theorem \ref{thrikimain} and its optimality, Theorem \ref{thrfrsop}. In Section \ref{qiksecstatemain}, we state our main theorem after a preliminary section and two sections where we explain in detail the flexible Tauberian condition and the different kinds of boundary behavior that we will consider for the Laplace transform. Section \ref{sectaub} is devoted to the treatment of the one-sided Tauberian condition. In section \ref{qikseckey} we show the key \emph{Tauberian lemma} and in section \ref{qiksecextest} we show that a sequence $\phi_n$ of test functions satisfying all desired properties in fact exists. In sections \ref{qiksecunik} and \ref{qiksecberess} we illustrate the strength of our approach by providing simplified proofs of an unquantified Ingham-Karamata theorem and the Berry-Esseen inequality. In section \ref{qiksechightaubcond} we briefly discuss how our method can be adapted to treat Tauberian conditions involving higher-order derivatives. In Section \ref{secest} we complete the proof of our main Theorem \ref{thrikimain} with an analysis of the different types of boundary behavior. In section \ref{sqikexact} we translate the convolution average arising in the Tauberian lemma \ref{lemtaubarg2} into a relation on the Fourier transform. Here we also derive the \emph{exact behavior} for our quantified Tauberian theorem. In the subsequent sections, we consider the boundary hypotheses separately. Specifically, in section \ref{qiksecanal} we complete the proofs of Theorems \ref{coran} and \ref{thrikimain}. In Section \ref{qiksecopti} we investigate whether the quantified rate obtained in our main Theorem \ref{thrikimain} is optimal. We state the optimality result in section \ref{secqikstate} and in the subsequent sections provide its proof. In the concluding section \ref{qikremopti} we discuss the employed methods in a broader context. Finally, in Section \ref{secvec} we briefly discuss an alternative scheme than what is usually done in the literature for obtaining vector-valued Tauberian theorems from their scalar-valued analogues. 

We have devised the main Theorem \ref{thrikimain} to be as general as reasonably possible without having overly technical assumptions. Throughout the paper we have included many remarks discussing the strengths and limitations of the methods, possible generalizations with more technical assumptions and other background information, yet the proof of the main theorem can be understood without these remarks. In particular, if one is only interested in the proof of Theorem \ref{coran} (without the optimality statement), it suffices to read sections \ref{qikseckey}, \ref{qiksecextest}, \ref{sqikexact}, \ref{secriknqu} and \ref{qiksecanal} without the remarks with the first item of Remark \ref{qikremnotherform} as the only exception. 

Our notation is standard throughout except that $\mathbb{N} = \{0, 1, 2, \dots\}$ includes $0$. As already mentioned we use the Vinogradov notation $f(x) \ll g(x)$ or $f(x) = O(g(x))$, that is, there exists a constant $C$ such that $f(x) \leq Cg(x)$ for all $x$ in the domain of $f$ and $g$. If the implicit constant $C$ depends on external parameters, these shall always be incorporated into the notation. If we write $f(x) \ll_{M_n} g(x)$, then the implicit constant is understood to depend on the whole sequence $M_n$ instead of only on the $n$'th entry. We write $f(x) = \Omega (g(x))$ if there exist $\varepsilon > 0$ and a sequence $x_n \rightarrow \infty$ such that $|f(x_n)| \geq \varepsilon g(x_n)$. We let $\chi_{E}$ denote the indicator function of a measurable set $E$, that is, $\chi_{E}(x) = 1$ if $x \in E$ and $0$ otherwise. $\R_{+}$ denotes the set of non-negative real numbers $[0,\infty)$ and $p,q \in [1,\infty]$ always represent \emph{conjugate} numbers, that is, $p^{-1} + q^{-1} = 1$. The letters $C$ or $C'$ shall always denote a positive constant, but these are not always the same throughout the paper.

The author wishes to thank Frederik Broucke and Jasson Vindas for valuable discussions and for proofreading previous versions of the manuscript.

\section{Statement of the main theorem} \label{qiksecstatemain}

\subsection{Preliminaries} \label{qiksecprelim}

We shall make occasional use of standard Schwartz distribution calculus in our manipulations throughout this work. Background material on distribution theory and Fourier transforms can be found in many classical textbooks, e.g. \cite{bremermann, hormander1990, vladimirov}; see \cite{p-s-v,vladimirov-d-z1} for asymptotic analysis and Tauberian theorems for generalized functions. 

If $U\subseteq\mathbb{R}$ is open, $\mathcal{D}(U)$ is the space of all smooth functions with compact support in $U$; its topological dual $\mathcal{D}'(U)$ is the space of distributions on $U$.
The standard Schwartz test function space of rapidly decreasing functions is denoted as usual by $\mathcal{S}(\mathbb{R})$, while $\mathcal{S}'(\mathbb{R})$ stands for the space of tempered distributions. The dual pairing between a distribution $f$ and a test function $\varphi$ is denoted as $\langle f, \varphi\rangle$, or  as $\langle f(x), \varphi(x)\rangle$ with the use of a dummy variable of evaluation. Locally integrable functions are regarded as distributions via $\langle f(x),\varphi(x)\rangle=\int_{-\infty}^{\infty}f(x)\varphi(x)\mathrm{d}x$. 

We fix the constants in the Fourier transform as
$\hat{\varphi}(t)=\mathcal{F}\{\varphi;t\}=\int_{-\infty}^{\infty}e^{-itx}\varphi(x)\:\mathrm{d}x.$ Naturally, the Fourier transform is well defined on $\mathcal{S}'(\mathbb{R})$ via duality, that is, the Fourier transform of $f\in\mathcal{S}'(\mathbb{R})$ is the tempered distribution $\hat{f}$ determined by $\langle \hat{f}(t),\varphi(t)\rangle=\langle f(x),\hat{\varphi}(x)\rangle$.
If $f\in\mathcal{S}'(\mathbb{R})$ has support in $[0,\infty)$, its Laplace transform is
$\mathcal{L}\left\{f;s\right\}=\left\langle f(u),e^{-su}\right\rangle,$ analytic on $\Re \:s>0$, and its Fourier transform $\hat{f}$ is the distributional boundary value of $\mathcal{L}\left\{f;s\right\}$ on $\Re \:s=0$. See \cite{vladimirov,zemanian} for complete accounts on Laplace transforms of distributions.

Let $M_n$ be a sequence of positive numbers. A smooth function $g : \R \rightarrow \C$ is said to be \emph{ultradifferentiable} at $t$ with respect to the sequence $M_{n}$ if there exist $B > 0$ and an open neighborhood $U$ of $t$ such that
\begin{equation} \label{eqriknnqu}
 \left|g^{(n)}(u)\right| \ll B^n M_{n}, \ \ \ \text{for all }n \in \N \text{ and } u \in U.
\end{equation}
The notion of ultradifferentiability bridges the gap between $C^{\infty}$-functions, where no uniform bounds on the derivatives are imposed, and \emph{real analytic} functions roughly corresponding to the sequence $M_n = n!$. Namely, if $M_{n} = n!$, the ultradifferentiable functions in $t$ for which \eqref{eqriknnqu} holds with the constant $B$ and interval  $U = (t-\varepsilon,t+\varepsilon)$ are analytic in the open ball of radius $\min\{B\inv,\varepsilon\}$ around $t$. A useful tool in this context is the so-called \emph{associated function}. This is
\begin{equation} \label{eqriknquass}                                                                                       
 M(x) = \sup_{n \in \mathbb{N}} \log\left(\frac{x^n}{M_{n}}\right), \ \ \ x > 0,
\end{equation}
whenever the supremum exists. For $M_{n} = n^n$ for instance, the associated function is $M(x) = x/e + O(1)$ as $x \rightarrow \infty$. 

Spaces of ultradifferentiable functions and their topological duals have been extensively studied in the literature. We recommend the interested reader to consult \cite{komatsu} for a detailed discussion. A central theme is to translate function-theoretic properties of the space into conditions for the sequence $M_n$. We briefly mention the results relevant to this work. 

The main properties of a sequence $M_n$ that we shall work with here are\footnote{The names (M.1) and (M.3)$'$ are standard in the theory of ultradifferentiable functions, but (SA) is not.} 
\begin{itemize}
\item[(M.1) :] \emph{logarithmic convexity}, $M_n^2 \leq M_{n-1}M_{n+1}$ for all $n \geq 1$,
\item[(M.3)$'$ :] \emph{non-quasianalyticity}, $\sum_{n = 0}^{\infty} \frac{M_n}{M_{n+1}} < \infty$.
\item[(SA) :] \emph{subanalyticity}, there exist $C,L > 0$ such that $M_n \geq CL^n n^n$ for all $n \in \N$.
 \end{itemize}

The \emph{Denjoy-Carleman theorem}, see e.g. \cite[Th. 1.3.8]{hormander1990}, asserts that, for a logarithmically convex sequence $M_n$, there exists a non-trivial compactly supported function $\phi$ such that $\phi$ is ultradifferentiable with respect to the class $M_n$ on $\mathbb{R}$ \emph{if and only if} the sequence $M_n$ satisfies (M.3)$'$. Correspondingly we shall say that a logarithmically convex sequence $M_n$ is \emph{quasianalytic} if $\sum_{n = 0}^{\infty} \frac{M_n}{M_{n+1}} = \infty$. The logarithmic-convexity-assumption (M.1) is no significant restriction as one may replace $M_n$ with the \emph{largest logarithmically convex minorant} of the sequence $M_n$, see e.g. \cite[Eq. (1.3.20)]{hormander1990}. For a logarithmically convex sequence $M_n$ is (M.3)$'$ also equivalent to $\int^{\infty}_1 M(y)y^{-2} \dif y < \infty$, with $M$ the associated function of $M_n$, see e.g \cite[pp. 91]{koosis}.
We also remark that, given two logarithmically convex non-quasianalytic sequences $M_n$ and $Q_n$, one may easily construct another logarithmically convex non-quasianalytic sequence that minorizes both $M_n$ and $Q_n$.

The subanalyticity assumption (SA) simply allows the functions in the ultradifferentiable class to admit a weaker (or equal) regularity than analyticity. We stress that there are sequences $M_n$, such as $M_n = (n \log (n+1))^n$ (with $M_0 =1)$, that are both quasianalytic and strictly subanalytic, that is, there are no constants $C, \tilde{L} > 0$ such that $n^n \geq C\tilde{L}^{n}M_n$ for all $n \in \mathbb{N}$.

A typical example are the \emph{Gevrey} ultradifferentiable functions corresponding to the sequence $M_n = (n!)^{1/\alpha}$, where $\alpha > 0$. The sequence is quasianalytic if and only if $\alpha \geq 1$ and its associated function is $ x^{\alpha}/\alpha - \log (x) / 2 + O_{\alpha}(1)$.

\subsection{The Tauberian condition} \label{sectaubcon}

We begin with a discussion of the \emph{Tauberian condition} treated in this work. A real-valued function $S: \R \rightarrow \R$ is said to satisfy the Tauberian condition $\mathfrak{T}(X,F,f, \alpha)$ if 
\begin{enumerate}
 \item $S$ is identically $0$ on the negative half-axis,
 \item $S(x) + F(x)$ is non-decreasing on $[X,\infty)$,
 \item the functions $F, f: \R \rightarrow \R_{+}$ are identically $0$ on the negative half-axis and satisfy 
 \begin{equation}\label{eqcructaubarg}
 \left|F(x+y)-F(x)\right| \leq f(x) |y| \exp(|y|^\alpha), \ \ \ \text{for all } x \geq X \text{ and } y \in \mathbb{R}, 
\end{equation}
with $0 \leq \alpha < 1$,
\item $ \displaystyle{ S(X) + F(X) \geq \sup_{y \leq X} \left(S(y) + F(y)\right) }$.
\end{enumerate} 
A complex-valued function $S : \R \rightarrow \C$ belongs to the class $\mathfrak{T}(X,F,f, \alpha)$ if both $\Re S$ and $\Im S$ satisfy the above requirements. 

As discussed in the Introduction the second property is the crucial one. The first property is only imposed because we work with Laplace transforms and the last one serves to achieve an essentially effective conclusion in the main Tauberian Theorem \ref{thrikimain} below. The third property is the mild regularity requirement of the function $F$. \par
The function $f$ in \eqref{eqcructaubarg} should be interpreted as the \emph{derivative} of $F$. In particular, if a positive function $f$ satisfies 
\begin{equation} \label{eqrikireg}
 f(x+y) \leq C f(x) \exp(\left|y\right|^{\alpha}), \ \ \ \text{for all } x \geq X  \text{ and } y \in \mathbb{R},
\end{equation}
for some $C > 0$ and $0 \leq \alpha < 1$, then \eqref{eqcructaubarg} is valid for $F(x) := \int^{x}_{0} f(y) \mathrm{d}y$ at the cost of replacing $f$ in \eqref{eqcructaubarg} with $Cf$. When the function $F$ is not explicitly mentioned, we always assume it represents the primitive of $f$. Thus, if $f$ satisfies \eqref{eqrikireg}, then the Tauberian condition $S'(x) \geq - f(x)$ guarantees that $S$ belongs to $\mathfrak{T}(0,F,Cf,\alpha)$. The classical Lipschitz Tauberian condition from Theorem \ref{thoriginal} can be encoded through $f(x) = Cx$ which satisfies \eqref{eqrikireg}. 

We underline that \eqref{eqrikireg} is not so restrictive. Many standard functions corresponding to standard singularities on the Laplace transform satisfy this requirement; we provide a non-exhaustive list below. We also note that \eqref{eqrikireg} is closed under taking sums and products, possibly with worse $C$ and $\alpha$. The functions $f$ are always $0$ on the negative half-axis. Therefore it suffices to verify \eqref{eqrikireg} for $x \geq X$, $y \geq -x$.
\begin{itemize}
 \item $f(x) = x^{\gamma}$ with $\gamma \geq 0$: $(x+y)^{\gamma} \leq x^{\gamma}(1+\left|y\right|)^{\gamma}$ for all $x \geq 1$ and $y\geq -x$.
\item $f(x) = \min\{1,x^{-\gamma}\}$ with $\gamma > 0$: for $x \geq 2$ and $|y| > x/2$, we have $f(x+y) \leq 1 \leq x^{-\gamma}2^{\gamma}|y|^{\gamma}$, whereas $f(x+y) \leq 2^{\gamma} x^{-\gamma}$ for $x \geq 2$ and $|y| \leq x/2$. 
\item $f(x)= \max\{0,\log^{\gamma}x\}$ with $\gamma \geq 0$: for $x\geq e$ we have $f(x+y) \leq f(x) (1 + \log(1+|y|))^{\gamma}$.
\item $f(x) = \log^{-\gamma}x$ with $\gamma > 0$ for $x \geq e$ and $0$ elsewhere: for $x \geq e$ and $|y| > x/2$ we have $f(x+y) \leq 1 \leq \log^{-\gamma} (x) \log^{\gamma}(2|y|)$ whereas $f(x+y) \leq (1-\log 2)^{-\gamma}\log^{-\gamma} x $ for $x \geq e$ and $|y| \leq x/2$.
\item $f(x) = \exp(\beta x^{\gamma})$ with $ 0 < \gamma < 1$ and $\beta > 0$: since $x^{\gamma}$ is subadditive and non-decreasing,
\begin{equation*}
 \exp(\beta (x+y)^{\gamma}) \leq \exp(\beta(x^{\gamma} + \left|y\right|^{\gamma})) = \exp(\beta x^{\gamma}) \exp(\beta \left|y\right|^{\gamma}), \ \ \ \ x \geq 0.
\end{equation*}
\end{itemize}
The above functions generally correspond to logarithmic, polynomial or exponential singularities for the Laplace transform.
\par
We discuss two possible generalizations of the Tauberian condition $\mathfrak{T}$ which come at the cost of bit more technical formulation. The third property of $\mathfrak{T}$ lays a restriction on the function $F$ and therefore on the treatable singularities of the Laplace transform. The key property of the factor $\exp(|y|^\alpha)$ in \eqref{eqcructaubarg} is that it is roughly the exponential of the associated function of a \emph{non-quasianalytic sequence}. It should be possible to establish our main Tauberian theorem under the Tauberian condition where $|y|^\alpha$ in \eqref{eqcructaubarg} is generalized to a general non-quasianalytic weight function $\Upsilon(|y|)$, that is $\int^{\infty}_0 \Upsilon(x)/(1+x^2) \dif x < \infty$.  

By a small adaptation of Lemma \ref{lemtaubarg2} it can be seen that our main Theorem \ref{thrikimain} below is actually valid under a slight generalization of the Tauberian condition $\mathfrak{T}$. Namely, one may replace the non-decrease of $\tau(x) := S(x) + F(x)$ on $[X,\infty)$ with
\begin{align*}  \tau(x+ y) - \tau(x) & \geq -E(x),  & x \geq X, \ \  y \geq 0, &\ \ \ \text{ and} \\
\tau(x+ y) - \tau(x) & \leq E(x), & x \geq X, \ \  y \leq 0,&
\end{align*}
where $E: [X,\infty) \rightarrow \R_{+}$ is a rate that is smaller than the one in the conclusion of the Tauberian Theorem \ref{thrikimain}, that is, $E(x) \ll$ the right-hand side of \eqref{eqrikires} below.  This generalization may be interpreted as a flexible quantified analogue of the frequently appearing \cite{korevaarbook} \emph{slowly decreasing} Tauberian condition, see also section \ref{qiksecunik}.

Finally we mention that our Tauberian condition can also be applied to summatory functions $\sum_{n \leq x} a_n$ with non-necessarily positive coefficients, but still satisfying $a_{n} \geq -b_n$ if the positive sequence $b_n$ is somewhat easier to handle than $a_n$. This can be done by applying a Tauberian theorem\footnote{In principle it is allowed to derive the asymptotics for $b_n$ via other non-Tauberian methods. Once these asymptotics for $b_n$ are established, this yields the exact behavior for the part of the integral transform coming from $b_n$. If the integral transform corresponding to $a_n$ then also admits this exact behavior, one can still apply an \emph{exact} Tauberian theorem to the sequence $a_n + b_n$.} to the non-decreasing functions $\sum_{n \leq x} (a_n + b_n)$ and $\sum_{n \leq x} b_n$ after subtracting the singularities of the integral transform which are encoded in the function $F$. We clarify by giving a simple example. Let $a_n = \mu(n)/n$, where $\mu$ denotes the M\"obius function, and $b_n = 1/n$. The Laplace transforms of the non-decreasing functions $\sum_{n \leq e^x} (\mu(n) + 1)/ n $ and $\sum_{n \leq e^x}  1/ n $ are respectively $1/ (s\zeta(s+1))  + \zeta(s+ 1)/s$ and $ \zeta(s+ 1)/s$, where $\zeta$ is the Riemann zeta function. Therefore, after subtracting the double pole at $s = 0$, which corresponds to subtracting $x + \gamma_0$---$\gamma_0 = 0.57\dots$ is the Euler-Mascheroni constant---from the functions $\sum_{n \leq e^x} (\mu(n) + 1)/ n $ and $\sum_{n \leq e^x}  1/ n$, both functions satisfy the Tauberian condition $\mathfrak{T}(0,x,1,0)$.

\subsection{The hypotheses on the Laplace transform}

In this section we introduce the different types of boundary behavior we consider for the Laplace transform. Here $N$ denotes a natural number, $B, D$ and $ \delta_0$ are positive real numbers, $M_n$ is a logarithmically convex \textbf{subanalytic} sequence of positive numbers, $1 \leq p \leq \infty$---in particular we treat $L^{\infty}$-bounds for the Laplace transform---and $G,H$ are positive (locally integrable) functions defined on appropriate domains. Furthermore $\omega: \R_{+} \rightarrow \R_{+}$ is a positive non-decreasing function satisfying $\omega(y) \gg_{\omega} y$ as $y \rightarrow 0^{+}$.
 
We treat the following function classes. Depending on the class we sometimes impose additional constraints on $G$ and $H$.
  
\begin{enumerate}
 \item $\mathcal{T}_{\mathrm{Dif}}(N,G,p,D)$ contains the functions $g: \R \rightarrow \C$ that are $N$ times \emph{differentiable} and satisfy $\|g^{(N)}(t)/G(t)\|_{L^{p}} \ll 1$ and\footnote{The hypothesis $\max_{0 \leq j < N} |g^{(j)}(0)| \leq D$ is only imposed to ensure that the implicit constant in the Tauberian theorem depends on an explicit parameter $D$ instead of on the values of (the derivatives of) the Laplace transform at $0$. This becomes relevant in our treatment of vector-valued functions in Section \ref{secvec} where one might otherwise argue that the implicit constant in the error term in the Tauberian theorem for $S_{\textbf{e}}$ might depend on $\textbf{e}$.} $\max_{0 \leq j \leq N-1} |g^{(j)}(0)| \leq D$.   
\item $\mathcal{T}_{\mathrm{DifI}}(N,G,D)$ is comprised of the functions $g: \R \rightarrow \C$ that are $N$ times \emph{differentiable}, satisfy $\int^{R}_{-R}|g^{(N)}(t)| \mathrm{d}t \ll G(R)$ for all $R > 0$ and $\max_{0 \leq j \leq N-1} |g^{(j)}(0)| \leq D$. Here $G: [0,\infty) \rightarrow \R$ shall always be non-decreasing.
 \item $\mathcal{T}_{\mathrm{HC}}(N,G,p,\omega,D,\delta_{0})$ contains the functions $g: \R \rightarrow \C$ that are $N$ times differentiable, satisfy $\max_{0 \leq j \leq N} |g^{(j)}(0)| \leq D$ and for which $g^{(N)}$ is \emph{uniformly H\"older continuous} with respect to $L^{p}$, $G$ and $\omega$, that is 
 \begin{equation} \label{eqrfruhc}
 \left\|\frac{\sup_{\left|u\right| \leq \delta}\left|g^{(N)}(t+u) - g^{(N)}(t)\right|}{G(t)}\right\|_{L^{p}} \ll \omega(\delta), \quad 0 < \delta \leq\delta_{0},
\end{equation}
where the $L^{p}$-norm is taken with respect to the variable $t$. These are weighted versions of \emph{Triebel-Lizorkin} spaces.
 \item $\mathcal{T}_{\mathrm{HCI}}(N,G,\omega,D,\delta_{0})$ comprises the functions $g: \R \rightarrow \C$ that are $N$ times differentiable, satisfy $\max_{0 \leq j \leq N} |g^{(j)}(0)| \leq D$ and for which
$$ \int^{R}_{-R} \sup_{|u| \leq \delta}|g^{(N)}(t+u) - g^{(N)}(t) | \mathrm{d}t \ll G(R) \omega(\delta), \ \ \ R > 0, \ \ \  0 < \delta \leq \delta_{0},
$$
Here $G$ is non-decreasing.  

 \item $\mathcal{T}_{\mathrm{SA}}(M_{n},B,G,H,p)$ consists of the functions $g: \R \rightarrow \C$ that are infinitely differentiable and for which there exist locally integrable functions $G_{n}$ such that 
 \begin{equation} \label{equltradif}
 \left\| \frac{g^{(n)}(t)}{G_{n}(t)} \right\|_{L^{p}} \ll B^{n} M_{n}, \ \ \ n \in \N,
\end{equation}
with
\begin{equation} \label{eqaux}
  \left\| G_{n}\chi_{(-\lambda,\lambda)}\right\|_{L^{q}} \leq G(\lambda)^{n} H(\lambda), \ \ \ \text{for all }n \text{ and }  \lambda > 0.
\end{equation}
\item $\mathcal{T}_{\mathrm{SAI}}(M_{n},B,G,H)$ comprises the infinitely differentiable functions $g: \R \rightarrow \C$ satisfying
\begin{equation} \label{eqqiksai} \int^{R}_{-R} |g^{(n)}(t)| \mathrm{d}t \ll B^{n}M_{n}G(R)^{n} H(R), \ \ \ R > 0, \ \ \ n \in \mathbb{N}.
\end{equation}
Here $G$ and $H$ are non-decreasing. 
\item $\mathcal{T}_{\mathrm{An}}(G,H)$: this class contains the functions $g: \R \rightarrow \C$ that admit an \emph{analytic} extension to $\{z : |\Im z| \leq 1/G(|\Re z|)\}$ where it satisfies the bound $H(|z|)$. The function $G: \mathbb{R}_{+} \rightarrow \mathbb{R}_{+} $ is continuous and non-decreasing with $G(0) \neq 0$. The function $tH(t)$ is non-decreasing on $\mathbb{R}_{+}$. 
\end{enumerate}
If all classes are considered at once, we use the notation $\mathcal{T}_\ast$. We remark that $\mathcal{T}_{\mathrm{SA}}$ is a generalization of the class $\mathcal{T}_{\mathrm{An}}$.

A common occurrence for $\omega$ is $\omega(\delta) = \delta^{\beta}$ with $0 < \beta < 1$, after which the function classes $\mathcal{T}_{\mathrm{HC}}$ and $\mathcal{T}_{\mathrm{HCI}}$ represent some form of uniform fractional differentiability, recovering in fact the familiar class of H\"older continuous functions when $p = \infty$.

We have chosen to write the bounds \eqref{eqaux} and \eqref{eqqiksai} in the form $G(\cdot)^n H(\cdot)$ for two reasons. First, this hypothesis often appears naturally in practice, see e.g. the treatment of $\mathcal{T}_{\mathrm{An}}$ in section \ref{qiksecanal}, and secondly it allows us to explicitly carry out a certain optimization providing a simpler expression for the quantified rate in the Tauberian Theorem \ref{thrikimain} below. However, we do emphasize that other expressions in terms of $n$ are possible in  \eqref{eqaux} and \eqref{eqqiksai} and those may in principle yield a better rate in the Tauberian theorem. We refer to Remark \ref{qikremnotherform} how our argument may be adapted to this situation.

The function classes $\mathcal{T}_{\mathrm{DifI}}$, $\mathcal{T}_{\mathrm{HCI}}$ and $\mathcal{T}_{\mathrm{SAI}}$ are introduced because, from the point of view of the Tauberian problem, they are more appropriate to work with (and may therefore in some applications deliver a better quantified rate) than the classes $\mathcal{T}_{\mathrm{Dif}}$, $\mathcal{T}_{\mathrm{HC}}$ and $\mathcal{T}_{\mathrm{SA}}$ respectively. In particular, in Section \ref{qiksecopti}, we manage to show the optimality of the obtained rate for the classes $\mathcal{T}_{\mathrm{DifI}}$, $\mathcal{T}_{\mathrm{HCI}}$ and $\mathcal{T}_{\mathrm{SAI}}$ without introducing extra lower bounds for the function $G$ (compared to the optimality theorem for the classes $\mathcal{T}_{\mathrm{Dif}}$, $\mathcal{T}_{\mathrm{HC}}$ and $\mathcal{T}_{\mathrm{SA}}$).

\subsection{The main theorem} \label{secstatemain}

We now arrive at our main theorem. Before its formulation we mention that if the Laplace transform admits a continuous extension to the line $\Re s = 0$, then it automatically has distributional boundary values, where the boundary distribution is the continuous function coming from the continuous extension. A similar comment applies to $L^{p}$-extensions. 

\begin{theorem} \label{thrikimain} Suppose $S \in L_{loc}^{1}(\mathbb{R}_{+})$ satisfies the Tauberian condition $\mathfrak{T}(X,F,f,\alpha)$. Assume that $\mathcal{L}\{S;s\}$ converges for $\Re s > 0$ and that it admits distributional boundary values on the line $\Re s = 0$, that is, there exists a distribution $g \in \mathcal{D}'(\R)$ such that
\begin{equation*} \lim_{\sigma \rightarrow 0+} \int^{\infty}_{-\infty} \mathcal{L}\{S; \sigma + it\} \phi(t) \dif t = \left\langle  g(t), \phi(t)\right\rangle, \ \ \ \text{for all } \phi \in \mathcal{D}(\R),
\end{equation*}
and that the boundary distribution $g$ belongs to the class $\mathcal{T}_{\ast}$. Then, for $x \geq X$ in case $\mathcal{T}_\ast \notin \{\mathcal{T}_{\mathrm{HC}} , \mathcal{T}_{\mathrm{HCI}}\}$, and $x \geq \max\{X, \pi/\delta_0, 2\pi\}$ in case $\mathcal{T}_\ast = \mathcal{T}_{\mathrm{HC}}$ or $\mathcal{T}_\ast = \mathcal{T}_{\mathrm{HCI}}$, there holds that
\begin{equation} \label{eqrikires}
 |S(x)| \ll_{\alpha} \inf_{\lambda \geq 1} E_\ast(x,\lambda) +  \frac{f(x)}{\lambda},
\end{equation}
where $E_{\ast}(x,\lambda)$ is an error term associated with the transform condition $\mathcal{T}_\ast$, see below.
\end{theorem}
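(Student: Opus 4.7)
The plan is to execute the Fourier-analytic program announced in the introduction, cleanly splitting the argument into a Tauberian piece that uses only the one-sided condition $\mathfrak{T}(X,F,f,\alpha)$ and a Fourier piece that uses only the boundary distribution $g$. For each $\lambda \geq 1$, I would construct (or rather invoke the construction of, see section \ref{qiksecextest}) a test function $\phi_\lambda$ whose Fourier transform $\hat{\phi}_\lambda$ is supported in $[-\lambda,\lambda]$ and which satisfies the key sign condition $x\phi_\lambda(x) \geq 0$.

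\textbf{First step: the Tauberian lemma.} Combining the monotonicity of $S + F$ on $[X,\infty)$ with the sign condition on $\phi_\lambda$, one shows
\[
|S(x)| \;\leq\; \bigl|(S \ast \phi_\lambda)(x)\bigr| \;+\; C_\alpha\,\frac{f(x)}{\lambda}, \qquad x \geq X,
\]
the remainder arising from \eqref{eqcructaubarg}: the growth factor $|y|\exp(|y|^\alpha)$ is absorbed into corresponding decay of $\phi_\lambda$ at infinity, which is precisely why the exponent $\alpha<1$ (the non-quasianalytic regime) is needed. This is the content of Lemma \ref{lemtaubarg2}, and it provides the second summand in \eqref{eqrikires}.

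\textbf{Second step: the Fourier estimate.} Since $\hat{\phi}_\lambda$ has compact support, a Parseval--duality identity expresses $(S \ast \phi_\lambda)(x)$ as a pairing of $g$ against $\hat{\phi}_\lambda(t)e^{ixt}$ on $[-\lambda,\lambda]$. Integrating by parts $N$ times transfers the oscillation onto $g$, and the residual integral is estimated using the hypothesis on $g$ appropriate to the class $\mathcal{T}_\ast$: an $L^p$ bound on $g^{(N)}$ for $\mathcal{T}_{\mathrm{Dif}}/\mathcal{T}_{\mathrm{DifI}}$; a modulus-of-continuity bound for $\mathcal{T}_{\mathrm{HC}}/\mathcal{T}_{\mathrm{HCI}}$; the full derivative hierarchy with $N$ optimized against $M_N$ for $\mathcal{T}_{\mathrm{SA}}/\mathcal{T}_{\mathrm{SAI}}$; and a contour shift into the analyticity strip for $\mathcal{T}_{\mathrm{An}}$. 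For the ultradifferentiable and analytic classes one uses the sequence of test functions $\phi_n$ from section \ref{qiksecextest} whose $n$-th derivatives satisfy $\phi_n^{(n)} \ll A^n n!$, tuning $n$ to extract the announced $E_\ast(x,\lambda)$. Taking the infimum over $\lambda \geq 1$ then combines both error contributions and yields \eqref{eqrikires}. The auxiliary lower bounds on $x$ in the H\"older cases ($x \geq \pi/\delta_0$ and $x \geq 2\pi$) arise naturally from requiring the scale on which the modulus-of-continuity bound is applied to stay within $(0,\delta_0]$.

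\textbf{Main obstacle.} The hard part is the construction of the sequence $\phi_n$: it must simultaneously have compactly supported Fourier transform, satisfy the sign condition $x\phi_n(x) \geq 0$, and obey the near-analytic derivative growth $\phi_n^{(n)} \ll A^n n!$. By Denjoy--Carleman no \emph{single} compactly supported function can satisfy the last estimate, so one truly needs a sequence, assembled along the lines of Mandelbrojt with H\"ormander's device of letting the sequence collectively impersonate an analytic compactly supported function. The delicate balance of $n$ against $\lambda$ in the ultradifferentiable step---and the need to realise the sign condition while keeping the constants $A$ controlled---is what produces the sharp decay rates and removes the vexing growth restriction \eqref{eqqikcondstahn} present in Stahn's theorem.
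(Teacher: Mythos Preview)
Your outline is correct and matches the paper's approach almost exactly: the Tauberian Lemma \ref{lemtaubarg2} with the sign condition $y\phi(y)\geq 0$, the translation of the convolution into a pairing $\langle g(t), e^{ixt}\hat{\phi}(-t/\lambda)\rangle$ via the limiting argument in section \ref{sqikexact}, integration by parts against the derivative hypotheses of $\mathcal{T}_\ast$, and the sequence $\phi_n$ of Lemma \ref{lemanalyticcuts} with $n$ optimised for the subanalytic classes.

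One point to correct: for $\mathcal{T}_{\mathrm{An}}$ you \emph{cannot} shift the contour in $\int g(t)e^{ixt}\hat{\phi}_n(-t/\lambda)\,\mathrm{d}t$, because $\hat{\phi}_n$ is compactly supported and hence has no analytic extension off the real axis. The paper instead applies Cauchy's formula to $g$ alone on small squares inside the analyticity region to obtain $|g^{(n)}(t)|\ll n^n e^{-n} G(t+G(0)^{-1})^n\tilde{H}(t)$, and then feeds these bounds into the $\mathcal{T}_{\mathrm{SA}}$ machinery (section \ref{qiksecanal}). This keeps the argument purely Fourier-analytic on the real line and is precisely what allows the same sequence $\phi_n$ to serve all classes uniformly.
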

\begin{itemize}
 \item $E_{\mathrm{Dif}}(N,G,p,D,x,\lambda) \ll_{N,D} x^{-N} (1 + \|G \chi_{(-\lambda,\lambda)}\|_{L^{q}})$. 
 \item $E_{\mathrm{DifI}}(N,G,D,x,\lambda) \ll_{N,D} x^{-N} (1 + G(\lambda))$.
 \item $E_{\mathrm{HC}}(N,G,p,\omega,D,\delta_{0},x,\lambda) \ll_{N,\omega,D,\delta_{0}} x^{-N} \omega(\pi/x) (1 +\|G \chi_{(-\lambda,\lambda)}\|_{L^{q}})$. 
 \item $E_{\mathrm{HCI}}(N,G,\omega,D,\delta_{0},x,\lambda) \ll_{N,\omega,D,\delta_{0}} x^{-N} \omega(\pi/x) (1+ G(\lambda))$.
\item $E_{\mathrm{SA}}(M_{n},B,G,H,p,x,\lambda) \ll_{M_{n}}  \exp\left(-M\left(\frac{x}{B G(\lambda) + \kappa\lambda^{-1}}\right)\right)H(\lambda)$, for some real constant $\kappa$ depending\footnote{An admissible value for $\kappa$ in $E_{\mathrm{SA}}$ and $E_{\mathrm{SAI}}$ is $\kappa = 124/L$ where $L$ is such that $M_n \gg L^n n^n$. In particular, for strictly subanalytic sequences any positive value for $\kappa$ is acceptable, but naturally the implicit constant in $E_{\mathrm{SA}}$ and $E_{\mathrm{SAI}}$ must then also depend on $\kappa$.} only on $M_{n}$ and where $M$ is the associated function of the sequence $M_{n}$. 
\item $E_{\mathrm{SAI}}(M_{n},B,G,H,x,\lambda) \ll_{M_{n}} \exp\left(-M\left(\frac{x}{BG(\lambda) +\kappa\lambda^{-1}}\right)\right)H(\lambda)$, for some real constant $\kappa$ depending only on $M_{n}$. 
\item $E_{\mathrm{An}}(G,H,x,\lambda) \ll  \exp\left(-\frac{x}{G(\lambda)(1+\kappa G(\lambda)\inv\lambda^{-1})}\right) \left\{\int^{\lambda}_{0}H(t)\mathrm{d}t + G(0)\inv \displaystyle{\max_{t \leq 5G(0)\inv}} H(t)\right\} \allowbreak \ll_{G(0),H} \exp\left(-\frac{x}{G(\lambda)(1+\kappa\lambda^{-1})}\right)\int^{\lambda}_{0}H(t)\mathrm{d}t$ if $\lambda \geq 2 G(0)\inv$ and $E_{\mathrm{An}}(G,H,x,\lambda) = \infty$ otherwise. An admissible value for $\kappa$ here is $675$.
\end{itemize} \par
We inserted the middle expression in $E_{\mathrm{An}}$ to clarify that the implicit constant of the Vinogradov symbol is essentially independent of $G$ and $H$. This can be important in applications where it is advantageous to allow the region of analytic continuation (and according bounds) to be dependent on $x$.

In principle one can retrieve the explicit dependence of $N$ in $E_{\mathrm{Dif}}$ (and in the other $E_\ast$) through a careful analysis of our proof, but we choose not to formulate them for the following reason. Our estimates under $\mathcal{T}_{\mathrm{Dif}}$ were established with the understanding that one only knows bounds for the $N$'th derivative of the Laplace transform and consequently the quality of our estimates in terms of $N$ is often rather poor. In applications where $N$ is to be optimized, there are usually good quantitative estimates available for all derivatives smaller than $N$ as well and incorporating this information will often lead to better estimates in terms of $N$. This occurs for example in the Selberg-Delange method if one wishes to optimize the location of the truncation of the expansion of a singularity. In this situation we strongly advise to follow the argument in section \ref{secriknqu} and Remark \ref{qikremnotherform} which improves the quality of the effective estimates in terms of $N$.

 \par 
The dependence on $\alpha$ of the implicit constant in \eqref{eqrikires} is usually not that important---it often suffices to take $\alpha$ fixed---but can be turned explicit. By exploiting that $x/\log^2 x$ is a non-quasianalytic weight function, the proof below delivers that an acceptable additional factor would be $\int^{\infty}_{1} x\exp(x^{\alpha} - x/\log^2 x) \dif x \ll \sup_{x \geq 1} \exp(x^{\alpha} - x/\log^2 x + 3 \log x) \ll \exp (\exp(3\log(1/(1-\alpha))/(1-\alpha)))$, see Lemmas \ref{lemtaubarg2} and \ref{lemanalyticcuts}, but by no means we claim that this bound should be optimal. We also remark that the dependence of the implicit constant in \eqref{eqrikires} on the implicit constants introduced in the Vinogradov notation in $\mathcal{T}_{\ast}$ is linear, that is, if e.g. in \eqref{equltradif} the implicit constant is $C \geq 1$, then the implicit constant in \eqref{eqrikires} should be multiplied with $C$ as well.

In Appendix \ref{rfrappendix} we have performed the optimization in \eqref{eqrikires} for several important concrete boundary assumptions for the Laplace transform under the classical Tauberian condition of boundedness (from below) of the derivative of $S$.

\section{The Tauberian argument: Estimation of the high frequencies} \label{sectaub}

In this Section we present an innovative argument that permits us to efficiently handle one-sided Tauberian conditions. In fact, up until now Tauberian theorems with one-sided conditions often required a more involved and delicate treatment, see e.g. \cite{Debruyne-VindasComplexTauberians}, \cite[Ch. III]{korevaarbook} or \cite[Ch. II.7]{Tenenbaumbook}. We demonstrate the strength of our technique in section \ref{qiksecunik} and \ref{qiksecberess} by simplifying the proofs of two influential one-sided Tauberian theorems, an unquantified version of the Ingham-Karamata theorem and the Berry-Esseen inequality.

The central idea of our approach is explained in section \ref{qikseckey}. In section \ref{qiksecextest} we construct a sequence of test functions satisfying the necessary requirements of the Tauberian Lemma \ref{lemtaubarg2}. This sequence is, to a large extent, responsible for the removal of the hypothesis \eqref{eqqikcondstahn} in Theorem \ref{thqikstahn}. In section \ref{qiksechightaubcond} we explain how one may adapt our argument for some other one-sided Tauberian conditions.

\subsection{The key lemma} \label{qikseckey}

The key idea to handle the one-sided Tauberian condition is captured in the following lemma.

\begin{lemma} \label{lemtaubarg2} Suppose $S$ is a real-valued function satisfying the Tauberian condition $\mathfrak{T}(X,F,f,\alpha)$. Let $\phi: \mathbb{R} \rightarrow \mathbb{R}$ be such that 
\begin{enumerate}
\item $\int^{\infty}_{-\infty} \phi = 1$, 
\item $\int^{\infty}_{-\infty} \left|y| \exp(|y|^\alpha) |\phi(y)\right| \mathrm{d}y \leq C$,
\item $\phi(y) \geq 0$ for $y \geq 0$ and $\phi(y) \leq 0$ for $y \leq 0$ .
\end{enumerate}
Then, for each $\lambda \geq 1$ and $x\geq X$,
\begin{equation} \label{eqrfrtm}
\lambda \int^{\infty}_{-\infty} S(x+ y) \phi(-\lambda y)\mathrm{d}y - \frac{Cf(x)}{\lambda} \leq S(x) \leq  \lambda \int^{\infty}_{-\infty} S(x+ y) \phi(\lambda y)\mathrm{d}y + \frac{Cf(x)}{\lambda}.
\end{equation}
\end{lemma}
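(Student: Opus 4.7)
My plan is to reduce both inequalities to a statement about a convolution average of $\tau := S+F$, then exploit the sign hypothesis on $\phi$ (condition~(3)) to make the $\tau$-contribution beneficial, while estimating the $F$-contribution via the regularity~(3) of the Tauberian condition $\mathfrak{T}$.

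First, I would perform the substitutions $u = \lambda y$ in the right inequality and $u = -\lambda y$ in the left inequality, which (using $\lambda \geq 1$) recasts \eqref{eqrfrtm} as
\begin{equation*}
\int_{-\infty}^{\infty} S(x - u/\lambda)\phi(u)\,\mathrm{d}u - \frac{Cf(x)}{\lambda} \;\leq\; S(x) \;\leq\; \int_{-\infty}^{\infty} S(x + u/\lambda)\phi(u)\,\mathrm{d}u + \frac{Cf(x)}{\lambda}.
\end{equation*}
Since $\int\phi = 1$ by hypothesis~(1), I can write $S(x) = \int S(x)\phi(u)\,\mathrm{d}u$, split $S = \tau - F$, and reduce each inequality to showing the non-negativity of a $\tau$-integral plus an $F$-estimate. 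Concretely, the right inequality is equivalent to
\begin{equation*}
\int[\tau(x+u/\lambda) - \tau(x)]\phi(u)\,\mathrm{d}u \;\geq\; \int[F(x+u/\lambda) - F(x)]\phi(u)\,\mathrm{d}u - \frac{Cf(x)}{\lambda},
\end{equation*}
and the left inequality has the analogous form with $+u/\lambda$ replaced by $-u/\lambda$ and an overall sign flip.

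Next comes the key step: a pointwise sign-matching argument. In the right inequality, for $u \geq 0$ we have $\phi(u) \geq 0$ by hypothesis~(3), and since $x + u/\lambda \geq x \geq X$, property~(2) of $\mathfrak{T}$ gives $\tau(x+u/\lambda) \geq \tau(x)$, so the integrand is non-negative. For $u < 0$ we have $\phi(u) \leq 0$; if $x+u/\lambda \in [X,x]$ then $\tau(x+u/\lambda) \leq \tau(x)$ by the non-decrease of $\tau$ on $[X,\infty)$, and if $x+u/\lambda < X$ we combine property~(4) with property~(2) to obtain $\tau(x+u/\lambda) \leq \tau(X) \leq \tau(x)$; again the integrand is non-negative. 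Hence the $\tau$-integral is $\geq 0$. The left inequality's integrand $[\tau(x) - \tau(x-u/\lambda)]\phi(u)$ is handled identically: the sign of the $\tau$-difference always agrees with the sign of $\phi(u)$.

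For the $F$-integral, I apply property~(3) of $\mathfrak{T}$, which gives $|F(x \pm u/\lambda) - F(x)| \leq f(x)\,|u/\lambda|\exp(|u/\lambda|^\alpha)$. Because $\lambda \geq 1$ and $0 \leq \alpha < 1$, we have $|u/\lambda|^\alpha \leq |u|^\alpha$, so hypothesis~(2) on $\phi$ yields
\begin{equation*}
\left|\int[F(x \pm u/\lambda) - F(x)]\phi(u)\,\mathrm{d}u\right| \;\leq\; \frac{f(x)}{\lambda}\int |u|\exp(|u|^\alpha)|\phi(u)|\,\mathrm{d}u \;\leq\; \frac{Cf(x)}{\lambda},
\end{equation*}
which together with the non-negativity of the $\tau$-integrals completes the proof.

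There is no serious obstacle here; the whole argument rests on recognizing that the asymmetric positivity condition $y\phi(y) \geq 0$ (hypothesis~(3)) is precisely what makes the product $[\tau(x+u/\lambda) - \tau(x)]\phi(u)$ pointwise non-negative in tandem with the one-sided monotonicity of $\tau$ on $[X,\infty)$. Property~(4) of $\mathfrak{T}$ plays the modest but essential role of propagating this sign-matching through the region $\{x + u/\lambda < X\}$ where the non-decrease hypothesis alone would not apply.
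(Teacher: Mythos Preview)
Your proof is correct and follows essentially the same approach as the paper's: both use $\int\phi=1$ to write $S(x)$ as a constant convolution, then exploit the sign-matching between $y\phi(y)\geq 0$ and the monotonicity of $\tau=S+F$ to make the $\tau$-contribution one-sided, and finally bound the $F$-remainder via property~(3) of $\mathfrak{T}$ and hypothesis~(2) on $\phi$. The only cosmetic difference is that you substitute $u=\lambda y$ first and separate $\tau$ from $F$ explicitly, whereas the paper keeps the $\lambda y$ form and passes directly from $S(x)\phi(\lambda y)$ to $S(x+y)\phi(\lambda y)+(F(x+y)-F(x))\phi(\lambda y)$; the underlying logic is identical.
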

\begin{proof} 
As $S(x) + F(x)$ is non-decreasing from $X$ onwards, we obtain for all $x \geq X$, 
\begin{align*}
S(x) & =  \lambda \int^{\infty}_{-\infty} S(x) \phi(\lambda y)\mathrm{d}y \\
& \leq \lambda \int^{\infty}_{-\infty} S(x+ y) \phi(\lambda y)\mathrm{d}y + \lambda \int^{\infty}_{-\infty} (F(x+y)-F(x)) \phi(\lambda y) \mathrm{d}y.
\end{align*}
Observe that the sign of $\phi$ ensures the validity of the inequality for both positive and negative $y$. Notice also that the inequality $S(x+y) + F(x+y) \leq S(x) + F(x)$ remains valid for $y \leq X-x$ because of the fourth property of $\mathfrak{T}$.

The second term can be estimated via \eqref{eqcructaubarg}. For $\lambda \geq 1$ we obtain
\begin{align*}
\lambda \int^{\infty}_{-\infty} (F(x+y)-F(x)) \phi(\lambda y) \mathrm{d}y& \leq   f(x) \int^{\infty}_{-\infty} |\lambda y|\exp(|y|^\alpha) |\phi(\lambda y)|\mathrm{d}y\\
& \leq   \frac{f(x)}{\lambda}\int^{\infty}_{-\infty} |y| \exp(|\lambda^{-1}y|^\alpha) |\phi(y)|\mathrm{d}y\\
& \leq   \frac{f(x)}{\lambda}\int^{\infty}_{-\infty} |y| \exp(|y|^\alpha) |\phi(y)|\mathrm{d}y\\
& \leq   \frac{Cf(x)}{\lambda}.
\end{align*}
 The lower inequality for $S(x)$ can be deduced analogously. The key difference is that the function $\phi(-y)$ is now non-positive for $y \geq 0$ and non-negative for $y \leq 0$. 
\end{proof}

Applying Lemma \ref{lemtaubarg2} to $\Re S$ and $\Im S$, one finds that for \emph{complex-valued} functions $S$ satisfying the Tauberian condition $\mathfrak{T}(X,F,f,\alpha)$, one has 
\[  |S(x)| \leq 2\lambda \max\left\{  \abs[3]{  \int^{\infty}_{-\infty} S(x+ y) \phi(-\lambda y)\mathrm{d}y},  \abs[3]{\int^{\infty}_{-\infty} S(x+ y) \phi(\lambda y)\mathrm{d}y} \right\} + \frac{2Cf(x)}{\lambda},
\]
if $x \geq X$ and $\lambda \geq 1$.

\subsection{Existence of test functions} \label{qiksecextest}

We now wish to show the existence of a test function $\phi$ that satisfies the hypotheses of Lemma \ref{lemtaubarg2} and some additional properties for its Fourier transform $\hat{\phi}$ that allow us to adequately handle the behavior of the class $\mathcal{T}_{\ast}$. In case the Laplace transform admits an ultradifferentiable extension with respect to the sequence $M_n$, one ideally has that $\hat{\phi}$ is compactly supported and sufficiently regular, in the sense that there exist $C, A > 0$ such that
\begin{equation} \label{eqderivfour}
 |\hat{\phi}^{(n)}(t)| \leq CA^{n}M_{n}, \ \ \ \text{for all } n \in \N \text{ and } t \in \R.
\end{equation}

Unfortunately, in the important case of $M_n = n^n$ which corresponds to analytic continuation, such a test function $\phi$ does not exist. In fact, in that case, the estimates \eqref{eqderivfour} imply that $\hat{\phi}$ is real analytic and this cannot be reconciled with $\hat{\phi}$ being non-trivial and compactly supported. In order to remedy this, we present here a new technique, which enables us to remove the hypothesis \eqref{eqqikcondstahn} in Theorem \ref{thqikstahn}.\par
Although it is impossible to find a single $\hat{\phi}$ for which \eqref{eqderivfour} holds for \emph{all} $n$ at once, it is possible to find a sequence $\phi_{n}$ such that a slightly modified form of \eqref{eqderivfour} holds for all derivatives up to $n$ with some degree of uniformity. Our construction is inspired by H\"ormander's treatment of the Denjoy-Carleman theorem, particularly \cite[Th. 1.3.5]{hormander1990}.

\begin{lemma} \label{lemanalyticcuts} Let $0 < \gamma < 1$. There exists a sequence of real-valued functions $\phi_{n} \in \mathcal{S}(\mathbb{R})$ such that 
\begin{enumerate}
\item $\int^{\infty}_{-\infty} \phi_{n}(y) \mathrm{d}y = 1$,
\item $\phi_{n}(y) \ll_{\gamma}\exp(-|y|^{\gamma})$, 
\item $y\phi_{n}(y)$ is non-negative, 
\item$\operatorname*{supp} \hat{\phi}_{n} \subseteq [-1,1]$,
\item there exist constants $C,A > 0$ such that for all $n \in \mathbb{N}$,
\begin{equation} \label{eqanalyticcuts}
 \sup_{t \in [-1,1]} \left|\hat{\phi}_{n}^{(j)}(t)\right| \leq C A^{j}n^{j}, \ \ \ j \leq n.
\end{equation}
\end{enumerate}
An admissible value for $A$ is $124$.
\end{lemma}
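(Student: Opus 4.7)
My plan is to build $\phi_n$ in the form $\phi_n(y) = c_n\cdot y\cdot |h_n(y)|^2$, so that $y\phi_n(y) = c_n y^2|h_n(y)|^2\geq 0$ is immediate, handling property (3). Writing $h_n(y) = k_n(y-y_0)$ for a real-even $k_n$ with $\hat{k}_n$ supported in $[-1/2,1/2]$ and a positive shift $y_0$, one computes $\int y\,|h_n|^2\,dy = y_0\|k_n\|_2^2 \neq 0$, and the scalar $c_n$ is then fixed by $\int \phi_n = 1$, giving property (1). The support condition (4) follows from $\hat{\phi}_n(t) = c_n\cdot i\,\partial_t\widehat{|h_n|^2}(t)$, since $\widehat{|h_n|^2}(t) = (2\pi)^{-1}(\hat{h}_n\ast \overline{\hat{h}_n(-\cdot)})(t) = (2\pi)^{-1}e^{-ity_0}(\hat{k}_n\ast \hat{k}_n)(t)$ is supported in $[-1,1]$.

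The heart of the matter is the derivative bound (5). I take $\hat{k}_n$ to be proportional to a Mandelbrojt-style finite convolution $V_N := v^{\ast N}$ with $v = 2N\chi_{[-1/(4N),1/(4N)]}$ and $N$ of order $n$; this gives $\operatorname{supp} V_N \subseteq [-1/2,1/2]$ and $\hat{V}_N(\xi) = \operatorname{sinc}^N(\xi/(2N))$, whose Gaussian-like concentration allows one to derive (via Plancherel) the $L^2$-bound $\|V_N^{(m)}\|_2 \leq C(AN)^m$ for $m\leq N$, avoiding the spurious $\sqrt{N}$ factor that appears when bounding $L^\infty$ directly. Applying Leibniz to $\hat{\phi}_n^{(j)}(t) = c_n\cdot i\,(e^{-ity_0}\widehat{k_n^2})^{(j+1)}(t)$ and controlling the convolution via Cauchy-Schwarz, $|(\hat{k}_n\ast \hat{k}_n)^{(k)}(t)| \leq \|\hat{k}_n^{(a)}\|_2\,\|\hat{k}_n^{(b)}\|_2$ with $a+b=k$, then produces $|\hat{\phi}_n^{(j)}(t)| \leq CA^j n^j$ for $j\leq n$, with appropriate choices of $y_0$ and $A$.

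The decay (2) follows by integration by parts in $\phi_n(y) = (2\pi)^{-1}\int_{-1}^{1}\hat{\phi}_n(t)\,e^{ity}\,\mathrm{d}t$: performing $n$ integrations by parts and using the bound in (5) gives $|\phi_n(y)| \leq C(An/|y|)^n$ for $|y|\geq An$, while for $|y|\leq An$ the trivial estimate $|\phi_n(y)| \leq \|\hat{\phi}_n\|_1/(2\pi) \leq C$ suffices. The inequality $(An/|y|)^n \leq \exp(-|y|^\gamma)$ amounts to $n\log(|y|/(An)) \geq |y|^\gamma$, which is satisfied for $|y|$ large since $\gamma<1$; consequently $\phi_n(y) \ll_\gamma \exp(-|y|^\gamma)$.

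The principal technical obstacle is the tension at $j = 0$, where one must obtain $|\hat{\phi}_n(t)| \leq C$ independently of $n$. The shift $y_0$ is necessary to generate a nonzero first moment of $|h_n|^2$, but the modulation $e^{-ity_0}$ in the Leibniz expansion couples with the $L^2$-spread of $k_n$ (which, from the concentration of $V_N$ on scale $\sim 1/\sqrt n$ in the Fourier variable, is of order $\sqrt n$ in the spatial variable), threatening to introduce an extra $\sqrt n$ prefactor in $(\widehat{k_n^2})'(t)$. Overcoming this requires the $L^2$-based (Plancherel) version of the convolution estimates, a tight normalization $\|k_n\|_2^2 = 1/y_0$, and a balanced choice of $y_0$; a careful bookkeeping of the constants arising in this balance produces the admissible value $A = 124$.
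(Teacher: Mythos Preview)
Your overall architecture---taking $\phi_n(y)=c_n\,y\,|h_n(y)|^2$ with $h_n$ a shifted real-even band-limited function built from a Mandelbrojt convolution---is close in spirit to the paper's construction, which also passes through $y\cdot|\varphi_{1,n}(y)|^2$ (with a case distinction depending on the sign and size of the first moment). The derivative estimate (5) via iterated convolutions of indicator functions is likewise the same mechanism the paper uses.

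However, your argument for the decay property (2) contains a genuine error. Your $\hat k_n$ is (proportional to) an $N$-fold convolution of indicator functions, hence only $C^{N-1}$, so $k_n(y)\sim\operatorname{sinc}^N(y/(2N))$ decays only like $|y|^{-N}$ for each \emph{fixed} $n$. Consequently $\phi_n(y)=c_n\,y\,|k_n(y-y_0)|^2$ decays at best polynomially, not like $\exp(-|y|^\gamma)$. Your justification---that $(An/|y|)^n\le\exp(-|y|^\gamma)$ because $n\log(|y|/(An))\ge|y|^\gamma$ ``is satisfied for $|y|$ large since $\gamma<1$''---is simply false: for fixed $n$ the left side grows like $n\log|y|$ while the right side grows like $|y|^\gamma$, so the inequality \emph{fails} for $|y|$ large. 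A band-limited function can decay like $\exp(-|y|^\gamma)$ only if its Fourier transform is $C^\infty$ with derivatives growing no faster than a Gevrey-type (non-quasianalytic) rate; your $\hat k_n$ is not even $C^\infty$.

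The paper repairs exactly this point by first introducing, via the Denjoy--Carleman theorem, a fixed auxiliary bump $\psi\in\mathcal D(-1/4,1/4)$ with $\hat\psi(y)\ll_\gamma\exp(-|y|^\gamma)$, and then setting $\hat\varphi_{1,n}=\psi\ast u_n$ before squaring. The factor $\hat\psi$ supplies the sub-exponential spatial decay uniformly in $n$ (since $|\hat u_n|\le1$), while the convolution with $u_n$ preserves the Mandelbrojt derivative bounds. Your construction needs an analogous smoothing step; without it property (2) fails. The claimed constant $A=124$, which in the paper arises from the specific arithmetic $(\pi+2)/\varepsilon$ with $\varepsilon=1/24$ coming precisely from the interaction with $\psi$, is therefore not substantiated by your argument either.
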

\begin{proof} Before we start the construction of the sequence, we introduce an auxiliary function that will enable us to achieve the decay rate $\phi_{n}(y) \ll_{\gamma}\exp(-|y|^{\gamma})$. Let $\psi \in \mathcal{D}(-1/4,1/4)$ be such that $\hat{\psi}(y) \ll_{\gamma}\exp(-|y|^{\gamma})$ and $\int^{\infty}_{-\infty} \psi = 1$. Its existence is guaranteed by the Denjoy-Carleman theorem. By considering, if necessary, the function $|\psi(t)|^2 = \psi(t) \overline{\psi(t)}$ and starting with a slightly larger $\gamma$, we may assume that $\psi$ is real-valued and non-negative. Furthermore, by, if necessary, convolving $\psi$ with $\chi_{[-1/2,1/2]}$ and rescaling with a factor $3$, we may assume that $\psi(t) \geq 1$ on the interval $(-1/12, 1/12) = (-2\varepsilon, 2\varepsilon)$ after selecting $\varepsilon = 1/24$. \par 
Now we begin constructing the sequence $\phi_n$. We modify H\"ormander's construction slightly; we allow the positive numbers $a_{j,n}$ to depend also on $n$. We may suppose without loss of generality that $n \geq 1$. We define a sequence of $n-1$ times differentiable real-valued functions $u_n$ as
\begin{equation}
 u_{n} = \frac{1}{2^{n+1}a_{0,n} \dots a_{n,n}} \left(\chi_{[-a_{0,n},a_{0,n}]} \ast \dots \ast \chi_{[-a_{n,n},a_{n,n}]} \right).
\end{equation}
We observe that $u_n$ is non-negative, supported on $[-(a_{0,n} + \dots + a_{n,n} ),a_{0,n} + \dots + a_{n,n}]$, has integral $\int^{\infty}_{-\infty} u_n = 1$ and its Fourier transform is bounded by
\begin{equation}
 |\hat{u}_{n}(t)| = \left|\prod_{j = 0}^{n} \frac{\sin(a_{j,n}t)}{a_{j,n}t}\right| \leq 1, \ \ \ t \in \mathbb{R}.
\end{equation}
Its derivatives satisfy
\begin{equation} \label{eqancut}
 \left|u_{n}^{(j)}(x)\right| \leq \frac{1}{2 a_{0,n} \dots a_{j,n}}, \ \ \ j \leq n-1.
\end{equation}
We choose $a_{0,n} = a_{1,n} = \varepsilon/4$, $a_{2,n} = \dots = a_{n,n} = \varepsilon/2(n-1)$. Thus $u_n$ is supported on $[-\varepsilon,\varepsilon]$, satisfies $|u_n| \leq 2/\varepsilon$ and
\begin{equation} \label{eqvn}             
 \left|u_{n}^{(j)}(x)\right| \leq 2^{j+2}\varepsilon^{-j-1}(n-1)^{j-1} = (4/\varepsilon)(2/\varepsilon)^{j}(n-1)^{j-1}, \ \ \ 1 \leq  j \leq n-1.
\end{equation}
This essentially establishes property \eqref{eqanalyticcuts} which will play a crucial role in Section \ref{secest}. In the rest of the proof we apply some technical modifications to this sequence in order for the other properties of the Tauberian Lemma \ref{lemtaubarg2} to be fulfilled. 

We define a first candidate $\varphi_{1,n}$ for the sequence $\phi_n$ through its Fourier transform. We set $\hat{\varphi}_{1,n}(t) = \psi \ast u_{n} \in \mathcal{D}(-1/2,1/2)$. We immediately obtain the decay estimate $\varphi_{1,n}(y) = (2\pi)^{-1} \hat{\psi}(-y)\hat{u}_n(-y) \ll_{\gamma} \exp(-|y|^{\gamma})$, but the functions $\varphi_{1,n}(y)$ may not be real-valued. We therefore consider $\varphi_{2,n}(y) = |\varphi_{1,n}(y)|^{2} =\varphi_{1,n}(y) \overline{\varphi_{1,n}(y)}$. Its Fourier transform is $\hat{\varphi}_{2,n} = (2\pi)^{-1}(\hat{\varphi}_{1,n}(\cdot) \ast \overline{\hat{\varphi}_{1,n}(-\cdot)}) \in \mathcal{D}(-1,1)$. As $\int^{\infty}_{-\infty} \hat{\varphi}_{1,n} = \int^{\infty}_{-\infty} \psi = 1$, the bounds on the derivatives \eqref{eqvn} also hold with $\hat{\varphi}_{2,n}$ replacing $u_n$. It now only remains to ensure the non-negativity of $y\phi_n(y)$ and $\int^{\infty}_{-\infty} \phi_n = 1$.

As the real-valued functions $\varphi_{2,n}$ belong to $\mathcal{S}(\mathbb{R})$, the integral $\int^{\infty}_{-\infty} y\varphi_{2,n}(y) \dif y =: c_n$ must always be real. We let $d_n = \int^{\infty}_{-\infty} y \varphi_{2,n}(y-\pi/\varepsilon) \dif y$ and define
$$ \varphi_{3,n}(y) = \begin{cases}
 c_n^{-1}y\varphi_{2,n}(y), & \text{if } c_n \geq 1/2,\\
 |c_n|^{-1} y \varphi_{2,n}(-y), & \text{if } c_n \leq -1/2, \\
 d_n^{-1} y \varphi_{2,n}(y-\pi/\varepsilon), & \text{if } |c_n| < 1/2.
\end{cases}
$$
We note that 
$$ d_n - c_n = \frac{\pi}{\varepsilon}\int^{\infty}_{-\infty} \varphi_{2,n}(y) \dif y = \frac{\pi}{\varepsilon} \int^{\infty}_{-\infty}| \varphi_{1,n}(y)|^2 \dif y = \frac{1}{2 \varepsilon} \int^{\infty}_{-\infty}| \hat{\varphi}_{1,n}(t)|^2 \dif t \geq 1,
$$
as for $|t|\leq \varepsilon$,
$$ \hat{\varphi}_{1,n}(t) = \int^{\varepsilon}_{-\varepsilon} u_n(w) \psi(t-w) \dif w \geq 1,
$$
because $\psi \geq 1$ on $(-2\varepsilon,2\varepsilon)$. Therefore $d_n$ is always larger than $1/2$ if $|c_n| < 1/2$. 

Finally we set $\phi_n(y) = \varphi_{3,n+2}(y)$. One can easily check that all the desired properties are fulfilled. The only one that requires a little effort is the verification of \eqref{eqanalyticcuts} in case $|c_{n+2}| < 1/2$. Then we find for $j \leq n$,
\begin{align*} |\hat{\phi}_n^{(j)}| & = |d_{n+2}^{-1}(e^{-i\pi ./\varepsilon}\hat{\varphi}_{2,n+2})^{(j+1)}| \leq 2 \sum_{\ell = 0}^{j+1} {j+1 \choose \ell} \left(\frac{\pi}{ \varepsilon}\right)^{\ell} |\hat{\varphi}_{2,n+2}^{(j+1-\ell)}| \\
& \leq \frac{8}{\varepsilon} (n+1)^j \sum_{\ell = 0}^{j+1} {j+1 \choose \ell} \left(\frac{\pi}{ \varepsilon}\right)^{\ell} \left(\frac{2}{\varepsilon}\right)^{j+1-\ell} = \frac{8}{\varepsilon} \left(\frac{\pi + 2}{\varepsilon} \right)^{j+1} (n+1)^j\\
& \leq \frac{8e}{\varepsilon} \left(\frac{\pi + 2}{\varepsilon} \right)^{j+1} n^j. 
\end{align*}
This concludes the proof of this lemma. The admissible value for $A$ follows as $(\pi +2)/ \varepsilon \approx 123.39 < 124$.
\end{proof}
We emphasize that the constants $C$ and $A$ in \eqref{eqanalyticcuts} are independent of $n$.\par
\begin{remark} \ 
\begin{enumerate}
\item When the sequence $M_n$ is non-quasianalytic, one can pick a single test function $\phi$ satisfying the first four properties of Lemma \ref{lemanalyticcuts} and \eqref{eqderivfour}. A non-trivial $\phi$ satisfying \eqref{eqderivfour} and the first, second and fourth property of Lemma \ref{lemanalyticcuts} is provided by the Denjoy-Carleman theorem, while the third property can be guaranteed by performing similar manipulations as in the above proof. 
\item We note that in an unpublished work Stahn successfully achieved some results with the Fourier method for $\mathcal{T}_{\mathrm{An}}$ employing just a Denjoy-Carleman argument, but he also had to impose here the additional growth condition \eqref{eqqikcondstahn} on the analytic extension of the Laplace transform. 
\item 
The main goal of the construction of the sequence $\phi_n$ is to have a compactly supported function $\phi_n$ realizing \eqref{eqderivfour} for the $n$'th derivative of $\phi_n$ (with some manageable control on the lower-order derivatives). It is impossible to achieve Lemma \ref{lemanalyticcuts} for a sequence $M_n$ that grows slower than $n!$. In other words, the constant $A$ in \eqref{eqanalyticcuts} cannot be taken arbitrarily close to $0$. 

Namely, let $\varphi \in \mathcal{D}(\R)$ with $\varphi \equiv 0$ on $(-\infty, 0)$. Then, for $t \geq 0$,
$$ |\varphi(t)| = \left|\int^{t}_{0} \dif t_1 \int^{t_1}_0 \dif t_2 \  \dots \int^{t_{n-1}}_0 \varphi^{(n)}(t_n) \dif t_n \right| \leq \frac{t^n}{n!}\max_{t' \in [0,t]} |\varphi^{(n)}(t')|.
$$

So, if $\int^{\infty}_{-\infty} \phi_n = \hat{\phi}_n(0) = 1$ and $\hat{\phi}_n(t) \equiv 0$ for $t \in (-\infty,-1)$, we can apply the above argument to $\varphi(t) = \hat{\phi}(t-1)$ to find $t'$ such that $|\hat{\phi}_n^{(n)}(t')| \geq n!$. In view of Stirling's formula, the constant $A$ in Lemma \ref{lemanalyticcuts} cannot be $e^{-1}$.

We also observe that the value of $A$ in Lemma \ref{lemanalyticcuts} is independent of $\gamma$, although the optimal value for $A$ may in principle still depend on it. We have put no effort into optimizing $A$.
\end{enumerate}
\end{remark}

To conclude this Section we wish to elucidate the strength of the technique from Lemma \ref{lemtaubarg2} by significantly simplifying the proofs of two one-sided Tauberian theorems. The reader solely interested in a proof of Theorem \ref{thrikimain} may jump straight to the next Section \ref{secest}.

\subsection{Application: the unquantified Ingham-Karamata theorem} \label{qiksecunik}
 We considerably simplify the proof of \cite[Th. 3.1]{Debruyne-VindasComplexTauberians}, a boundedness theorem related to the Ingham-Karamata theorem. It states that if a locally integrable function $S : [0,\infty) \rightarrow \R$ which is \emph{boundedly decreasing}, that is, there exists $v_0 > 0$ such that
 $$ \liminf_{x \rightarrow \infty} \inf_{v \in [0,v_0]} (S(x+v) - S(x)) > -\infty,
 $$
has a Laplace transform, assumed to initially converge on $\Re s > 0$, that admits \emph{local pseudomeasure boundary behavior} near $s =0$, then $S$ is eventually bounded, that is $\limsup_{x \rightarrow \infty} |S(x)| < \infty$.

Local pseudomeasure behavior near $s = 0$ plays the role of the exact boundary behavior behavior of the Laplace transform for this theorem. One of its characterizations is that there exists $\varepsilon > 0$ such that $\langle\hat{S}(t), e^{ivt} \phi(t)\rangle = O_{S,\phi}(1)$ as $v \rightarrow \infty$ for all test functions $\phi \in \mathcal{D}(-\varepsilon, \varepsilon)$ and where $\hat{S}$ is the boundary distribution of the Laplace transform on $\Re s = 0$.  
 
We remark that a weaker version of this boundedness theorem had already been communicated by Korevaar \cite[Ch. III, Prop. 10.2]{korevaarbook} but with a flawed proof, see \cite[Remark 3.2]{Debruyne-VindasComplexTauberians}. 

Concretely, we follow the proof of \cite[Th. 3.1]{Debruyne-VindasComplexTauberians} up to step 2. These first steps consist essentially of some preliminary work to set the stage properly, while the crux of the proof lies in step 3 where in \cite{Debruyne-VindasComplexTauberians} a reasonably elaborate argument is given to conclude the eventual boundedness from above of $S(x)$. We now replace that argument. We use the following ingredients that were established in step 2: from the bounded decrease of $S$, we infer that we may without loss of generality assume the existence of $C,X> 0$ such that
\begin{equation} \label{eqqikunq}
 S(y) - S(x) \geq -C(y-x+1), \ \ \ \text{for all }X \leq x \leq y,
\end{equation}
and from $S(x) = O_{S}(x)$, $x \rightarrow \infty$, and the local pseudomeasure boundary behavior, it follows that $S \ast \phi(x) = O_{S,\phi}(1)$ for all $\phi \in \mathcal{F(\mathcal{D}(-\varepsilon,\varepsilon))}$ for a sufficiently small $\varepsilon > 0$, by a similar argument as in step 1 of the proof. (We use dominated convergence instead of monotone convergence to justify the switch of limit and integral.) 

We now pick a fixed $\phi \in \mathcal{F}(\mathcal{D}(-\varepsilon,\varepsilon))$ satisfying the properties of Lemma \ref{lemtaubarg2}. The existence of such a $\phi$ is for instance provided by Lemma \ref{lemanalyticcuts} after a rescaling. We may suppose that $X$ is so large that $I_x := \int^{\infty}_{X-x} \phi(y) \dif y$ belongs to the interval $[1,2]$ for all $x \geq 2X$. Applying \eqref{eqqikunq} we obtain that
\begin{align*}
 S(x) &  = \frac{1}{I_x}  \int^{\infty}_{X-x} S(x) \phi(y) \mathrm{d}y \\
 & \leq \frac{1}{I_x}\int^{\infty}_{X-x} S(x+y) \phi(y)\mathrm{d}y + \frac{C}{I_x} \int^{\infty}_{X-x} y\phi(y)\mathrm{d}y + \frac{C}{I_x} \int^{\infty}_{X-x} \left|\phi(y)\right|\mathrm{d}y\\
& = \frac{1}{I_x}\int^{\infty}_{-\infty} S(x+y) \phi(y)\mathrm{d}y + O_{S,C,\phi,X}(1)  = \frac{S \ast \check{\phi}(x)}{I_x}  + O_{S,C,\phi,X}(1) = O_{S,C,\phi,X}(1),
\end{align*}
as $x \rightarrow \infty$ where $\check{\phi}(x) = \phi(-x)$. The lower bound for $S$ can be derived analogously by replacing $\phi$ with $\check{\phi}$ or even via Step 4 of the proof of \cite[Th. 3.1]{Debruyne-VindasComplexTauberians}.

\subsection{Application: the Berry-Esseen inequality} \label{qiksecberess}
As another illustration of the strength of our technique we present a new short proof of the \emph{Berry-Esseen inequality}, which is more direct than the classical one involving the non-negative test function $(1-\cos(x))/x^2$, see e.g. \cite[Lemma 1.47]{elliott} or \cite[pp. 536--538]{feller}. The Berry-Esseen inequality plays a crucial role in obtaining quantified central limit theorems in probability theory. 

\begin{theorem}[Berry-Esseen] Let $F,G$ be two distribution functions\footnote{A function $F: \mathbb{R} \rightarrow [0,1]$ is called a \emph{distribution function} if it is non-decreasing, right continuous and satisfies $F(-\infty) = 0$ and $F(\infty) = 1$. Its \emph{characteristic function} $f$ is given by $f(t) = \int^{\infty}_{-\infty} e^{itx} \mathrm{d}F(x)$.}. For each $T > 0$, there holds that
\begin{equation} \label{eqrfrbe}
 \sup_{x \in \mathbb{R}} |F(x) - G(x)| \leq 10 \sup_{\substack{x \in \mathbb{R} \\ 0 \leq y \leq 1/T}} \left(G(x+y) - G(x)\right) + \frac{1}{5}\int^{T}_{-T} \left|\frac{f(t) - g(t)}{t}\right| \mathrm{d}t,
\end{equation}
where $f$ and $g$ are the characteristic functions of $F$ and $G$ respectively.
\end{theorem}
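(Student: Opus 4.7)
My approach is to apply the Tauberian Lemma \ref{lemtaubarg2} directly to $S := F - G$: the hypothesis that $F$ is non-decreasing means $S + G$ is non-decreasing, which plays the role of the one-sided Tauberian condition with the ``$F$'' of Lemma \ref{lemtaubarg2} being $G$. By swapping $F \leftrightarrow G$ it suffices to bound $\sup_{x}(F(x)-G(x))$. Fix $x_0$ realizing this within $\varepsilon$, and pick a real $\phi$ with $\int\phi=1$, $y\phi(y)\geq 0$, and $\operatorname{supp}\hat\phi\subseteq[-1,1]$ (obtained by rescaling a member of the family in Lemma \ref{lemanalyticcuts}). Setting $\Phi_T(y) := T\phi(Ty)$, the identity
\begin{equation*}
 S(x_0) = \int S(x_0+y)\Phi_T(y)\dif y + \int (S(x_0)-S(x_0+y))\Phi_T(y)\dif y
\end{equation*}
splits the estimate into a Fourier ``main term'' and a ``Tauberian error term''.

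For the error term I exploit precisely the sign-matching between $\phi(Ty)$ and the inequalities coming from $F$ non-decreasing (the essence of Lemma \ref{lemtaubarg2}): for $y\geq 0$ we have $S(x_0)-S(x_0+y)\leq G(x_0+y)-G(x_0)$ and $\phi(Ty)\geq 0$, while for $y\leq 0$ both inequalities reverse. In either case the integrand is dominated by $|G(x_0+y)-G(x_0)|\cdot|\Phi_T(y)| \leq \omega_{|y|}(G)|\Phi_T(y)|$, where $\omega_\delta(G):=\sup_{x}(G(x+\delta)-G(x))$. A substitution $y\to y/T$ together with the subadditivity estimate $\omega_{|y|/T}(G) \leq (|y|+1)\omega_{1/T}(G)$ then bounds the error term by $C_\phi\,\omega_{1/T}(G)$ with $C_\phi := \int(|y|+1)|\phi(y)|\dif y$; observe that $\omega_{1/T}(G)$ coincides with the first quantity on the right of \eqref{eqrfrbe}.

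For the main term, a Fubini computation using the Stieltjes representation $S(y) = \int_{-\infty}^{y}\dif(F-G)$ gives $\int S(x_0+y)\Phi_T(y)\dif y = -\int\Phi(T(u-x_0))\dif(F-G)(u)$, where $\Phi(v) := \int_{-\infty}^{v}\phi$. Distributionally one has $\hat\Phi(t) = \hat\phi(t)/(it) + \pi\delta(t)$, and crucially the $\delta$-contribution cancels because $f(0) = g(0) = 1$; the Parseval-type pairing $\int h\dif\mu = \frac{1}{2\pi}\int \hat h(t)\hat\mu(-t)\dif t$ then yields
\begin{equation*}
 \int S(x_0+y)\Phi_T(y)\dif y = -\frac{1}{2\pi}\int_{-T}^{T}\frac{e^{-itx_0}\hat\phi(t/T)}{it}(f(t)-g(t))\dif t,
\end{equation*}
the truncation to $[-T,T]$ coming from $\operatorname{supp}\hat\phi\subseteq[-1,1]$. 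Taking absolute values and pulling out $\|\hat\phi\|_\infty$ bounds this by $(\|\hat\phi\|_\infty/(2\pi))\int_{-T}^{T}|f(t)-g(t)|/|t|\dif t$.

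Combining the two bounds and letting $\varepsilon\to 0$ produces an inequality of the shape \eqref{eqrfrbe} with constants $C_\phi$ and $\|\hat\phi\|_\infty/(2\pi)$ in place of $10$ and $1/5$. The structural proof is therefore very short; the main obstacle is arranging the explicit constants, which amounts to selecting $\phi$ with $y\phi(y)\geq 0$, $\operatorname{supp}\hat\phi\subseteq[-1,1]$, $\int\phi=1$, $C_\phi\leq 10$ and $\|\hat\phi\|_\infty\leq 2\pi/5\approx 1.26$. Since $\|\hat\phi\|_\infty\geq \hat\phi(0)=1$ the latter requirement is tight but achievable; this is a concrete finite-dimensional tuning within the family of Lemma \ref{lemanalyticcuts} (possibly after a small convolution-and-rescaling regularization) and is the only non-routine technical point of the argument.
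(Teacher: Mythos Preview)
Your approach is essentially the paper's: the same sign condition $y\phi(y)\geq 0$ paired with the increments of $G$ for the Tauberian error term, and the same Fourier identification for the main term (the paper computes $\hat S(t)=(it)^{-1}(f(-t)-g(-t))$ directly rather than passing through the primitive $\Phi$, but the two routes are equivalent). For the explicit constants the paper exhibits
\[
\phi(x)=\frac{x\sin^{4}\!\left(\tfrac{x}{4}-\tfrac{3}{2\pi}\right)}{16\left(\tfrac{x}{4}-\tfrac{3}{2\pi}\right)^{4}},
\]
for which $\int y\phi(y)\,\mathrm{d}y\approx 8.19$, $\int|\phi|\approx 1.61$ (so $C_\phi\approx 9.80\leq 10$) and $\|\hat\phi\|_\infty\approx 1.22\leq 2\pi/5$, exactly matching the constraints you wrote down.

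One point does need fixing: the ``swap $F\leftrightarrow G$'' shortcut does not give the stated inequality, because after swapping the Tauberian error is governed by $\omega_{1/T}(F)$ rather than $\omega_{1/T}(G)$. The correct way to obtain the lower bound $S(x_0)\geq -(\text{RHS})$ is to rerun the same argument with the reflected kernel $\check\phi(y):=\phi(-y)$, which satisfies $y\check\phi(y)\leq 0$. The inequalities $S(x_0)-S(x_0+y)\geq -(G(x_0)-G(x_0+y))$ for $y\leq 0$ and the reverse for $y\geq 0$ (both consequences of $F$ non-decreasing) then match the sign of $\check\phi$, so the error is still bounded by $C_\phi\,\omega_{1/T}(G)$; the main term is unchanged since $|\hat{\check\phi}|=|\hat\phi|$. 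This is exactly how Lemma~\ref{lemtaubarg2} produces both inequalities in \eqref{eqrfrtm}.
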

\begin{proof} Let $S(x) = F(x) - G(x)$ and let 
$ \mathfrak{G} := \sup \left(G(x+y) - G(x)\right)
$, the supremum taken over $x \in \R$ and $0 \leq y \leq 1/T$. As $F$ and $G$ are distribution functions, $S$ is bounded and belongs thus to $\mathcal{S}'(\R)$. Since $F$ is non-decreasing, $S$ also satisfies the following inequalities,
\begin{align*}
 S(x) &\leq S(x+y) + \mathfrak{G}(Ty + 1),   & \text{if } y \geq 0, \\
 S(x) &\geq S(x+y) - \mathfrak{G}(T|y| + 1), & \text{if } y \leq 0.
\end{align*} 
Therefore, if $\phi \in \mathcal{F}(\mathcal{D}(-1,1))$ such that $\int^{\infty}_{-\infty} \phi = 1$ and $y\phi(y)$ is non-negative, we find
\begin{align*}
 S(x) &= \int^{\infty}_{-\infty} S(x) T\phi(Ty)\mathrm{d}y \\
& \leq  \int^{\infty}_{-\infty} S(x+y) T\phi(Ty)\mathrm{d}y + \mathfrak{G}\int^{\infty}_{-\infty} T^{2}y \phi(Ty)\mathrm{d}y + \mathfrak{G}\int^{\infty}_{-\infty} T |\phi(Ty)|\mathrm{d}y \\
& = \frac{1}{2\pi}\int^{\infty}_{-\infty} \hat{S}(t)e^{ixt} \hat{\phi}(-t/T)\mathrm{d}t + \mathfrak{G} \int^{\infty}_{-\infty} y\phi(y) + |\phi(y)|\mathrm{d}y,
\end{align*}
where $\hat{S}$ is to be interpreted in the space $\mathcal{S}'(\R)$, but as we verify below, this corresponds to integration with $(it)^{-1}(f(-t) -g(-t))$. 

Indeed, one may suppose that $(it)^{-1}(f(-t) -g(-t))$ is integrable near $t = 0$ as otherwise the conclusion \eqref{eqrfrbe} becomes trivial. The claim $\left\langle \hat{S}, \psi \right\rangle= \int^{\infty}_{-\infty} (it)^{-1}(f(-t) -g(-t)) \psi(t) \dif t$ is also easily checked if $\psi(0) = 0$. On the other hand, for $\psi \in \mathcal{D}(\R)$, one has
\begin{align*}
\int^{\infty}_{-\infty} (it)^{-1}(f(-t) -g(-t)) \psi(t) \dif t &= \lim_{u \rightarrow \infty} \int^{\infty}_{-\infty}(it)^{-1}(f(-t) -g(-t)) (1 - e^{iut})\psi(t) \dif t \\
& = \lim_{u \rightarrow \infty} \left\langle \hat{S}(t), (1-e^{iut})\psi(t)\right\rangle \\
& = \lim_{u \rightarrow \infty} \left\langle S(y) - S(y+u),\hat{\psi}(y)\right\rangle = \left\langle \hat{S}(t),\psi(t)\right\rangle,
\end{align*}
because of the Riemann-Lebesgue lemma and the fact that $\lim_{u \rightarrow \infty} S(u) = 0$. As $\mathcal{D}(\R)$ is dense in $\mathcal{S}(\R)$, the claim follows.
 
 The Berry-Esseen inequality (with unspecified constants) now follows by taking a valid $\phi$ from for example Lemma \ref{lemanalyticcuts}. To get the constants in \eqref{eqrfrbe}, one may take 
 \begin{equation} \label{eqqikexpex}
\phi(x) = \frac{x\sin^{4}\left(\frac{x}{4} - \frac{3}{2\pi}\right)}{16\left(\frac{x}{4} - \frac{3}{2\pi}\right)^4}.
\end{equation}
 This choice of $\phi$ is permitted since, as $S$ is bounded and therefore belongs to the dual of $L^1(\R)$, a density argument allows one to replace the condition $\phi \in \mathcal{F}(\mathcal{D}(-1,1))$ with $\phi \in L^1(\R)$ and $\operatorname*{supp} \hat{\phi} \subseteq [-1,1]$. The function $\hat{\phi}$ is supported on $[-1,1]$ with $\max|\hat{\phi}(t)| \approx 1.22$, while $\int^{\infty}_{-\infty} y\phi(y) \dif y \approx 8.19$ and $\int^{\infty}_{-\infty} |\phi|\approx 1.61$. 
\end{proof}
We did not attempt to optimize the constants in \eqref{eqrfrbe} here, which in applications usually play no significant role. The function $\phi$ in \eqref{eqqikexpex} was obtained by performing the procedure explained in the proof of Lemma \ref{lemanalyticcuts}, but by only considering a fourfold convolution of step functions and by not implementing the function $\psi$.

\subsection{Tauberian conditions with higher-order derivatives} \label{qiksechightaubcond}

So far we have treated the Tauberian condition $S(x) + F(x)$ being non-decreasing, which roughly corresponds to a one-sided hypothesis for the first derivative of $S$. With more effort and a somewhat more involved notation, our argument can be adapted to generate one-sided Tauberian lemmas for Tauberian conditions incorporating higher-order derivatives of $S$.

We consider the Tauberian condition $\mathfrak{T}_{m}(X, F_m,f_m,\alpha)$ where $X$ is a real number, $0 < \alpha < 1$ and $F_m, f_m$ are real-valued locally bounded functions\footnote{We observe that the local boundedness of $F_m$ together with \eqref{eqqikexpm} implies that $F_m(x) \ll_{m,f_m,\beta} \exp(|y|^{\beta})$ for all $\beta > \alpha$.}. A real-valued function $S : \R \rightarrow \R$ belongs to the class $\mathfrak{T}_{m}$ if
\begin{enumerate}
\item $S$ is identically $0$ on $(-\infty,0)$ with\footnote{We imposed this extra assumption to ensure that the argument in section \ref{sqikexact} below to translate the integral $\int^{\infty}_{-\infty} S(x+y) \phi(\lambda y) \dif y$ in terms of the Laplace transform may be adapted without too much difficulty. We stress that this should not be regarded as a two-sided restriction. Namely, if $S$ is $m$ times differentiable, the convergence of the Laplace transform of $S$ on the half-plane $\Re s > 0$ together with the one-sided assumption $S^{(m)}(x) \geq 0$ for sufficiently large $x$ already implies that $e^{-\sigma x}S(x) \ll_{\sigma} 1$, see e.g. the argument at the beginning of the proof of \cite[Th. 2.2]{d-simpletauberian}.} $ e^{-\sigma x} S(x) \in \mathcal{S}'(\R)$ for every $\sigma > 0$, 
\item there holds
\begin{equation} \label{eqqiktaubm} y^m \Delta^{m}_{y}(\tau; x) := y^m \sum_{j = 0}^m  (-1)^{m-j} {m \choose j} \tau(x + jy) \geq 0, \ \ \ x \geq X,\ \  y \in \R,
\end{equation}
where $\tau(x) = \tau_m(x) := S(x) + F_{m}(x)$,
\item there holds
\begin{equation}  \label{eqqikexpm} |\Delta^{m}_{y}(F_m;x)| \leq |y|^m f_m(x) \exp(|y|^\alpha),  \ \ \ x \geq X, \ \ y \in \mathbb{R},
\end{equation}
\item for even $m$, there holds
\begin{equation} \label{eqqikexpmadj} |\Delta^{m}_{y}(F_m;x-y)| \leq |y|^m f_m(x) \exp(|y|^\alpha),  \ \ \ x \geq X, \ \ y \in \mathbb{R}.
\end{equation}
\end{enumerate}
The function $f_m$ is to be interpreted as the $m$'th derivative of $F_m$ and often the fourth property follows from the same calculations needed to verify the third one. Again the crucial property is the second one which is a generalization of $\tau^{(m)}(x) \geq 0$ (for $x \in \R$). Note also that the second property of $\mathfrak{T}_{m}(X,F_m,f_m,\alpha)$ covers both the second and fourth property of $\mathfrak{T}(X,F,f,\alpha)$ defined in section \ref{sectaubcon}.

\begin{lemma} \label{lemtaubargm} Suppose $S: \R \rightarrow \R$ satisfies the Tauberian condition $\mathfrak{T}_{m}(X,F_m,f_m,\alpha)$. Let $\phi$ be a real-valued function such that 
\begin{enumerate}
\item $\int^{\infty}_{-\infty} \phi = 1$, 
\item $\int^{\infty}_{-\infty} \left|y|^m \exp(|y|^\alpha) |\phi(y)\right| \mathrm{d}y \leq C_m$,
\item $y^m\phi(y) \geq 0$ for all $y \in \R$.
\end{enumerate}
Then, for each $\lambda \geq 1$ and $x\geq X$,
\begin{align} 
 |S(x)|  \label{eqqikreshigh} & \leq 2^m \lambda \max_{\substack{ |j| \leq m \\  0 \neq j \in \mathbb{Z}}} \left|\int^{\infty}_{-\infty} S(x+ jy) \phi(\lambda y)\mathrm{d}y\right|+ \frac{C_mf_m(x)}{\lambda^m}.
\end{align}
\end{lemma}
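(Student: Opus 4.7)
The approach is to adapt the sign trick of Lemma~\ref{lemtaubarg2} to the higher-order setting, starting from the algebraic identity
\[
\tau(x) = (-1)^m \Delta^m_y(\tau;x) + \sum_{j=1}^m (-1)^{j+1}\binom{m}{j}\tau(x+jy),
\]
obtained by isolating the $j=0$ term in the definition of $\Delta^m_y(\tau;x)$. The first step is to multiply both sides by $\lambda\phi(\lambda y)$ and integrate over $y\in\R$. Since $\int_{-\infty}^{\infty}\lambda\phi(\lambda y)\,\mathrm{d}y=1$, the left-hand side stays $\tau(x)$, and the Tauberian sign hypothesis $y^m\Delta^m_y(\tau;x)\ge 0$ combined with $(\lambda y)^m\phi(\lambda y)\ge 0$ yields $\Delta^m_y(\tau;x)\phi(\lambda y)\ge 0$ pointwise, whence $\int\Delta^m_y(\tau;x)\phi(\lambda y)\,\mathrm{d}y\ge 0$. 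Together with the prefactor $(-1)^m$ this produces a one-sided estimate on $\tau(x)$: an upper bound when $m$ is odd and a lower bound when $m$ is even. Replacing $\phi(\lambda y)$ by $\phi(-\lambda y)$, equivalently substituting $y\mapsto -y$ in the identity, produces the analogous inequality with $\tau(x-jy)$ in place of $\tau(x+jy)$; after the change of variables $u=-y$ these integrals are precisely $\int S(x+j'y)\phi(\lambda y)\,\mathrm{d}y$ with $j'=-j$, filling out the full range $|j|\le m$, $j\ne 0$ appearing in~\eqref{eqqikreshigh}.

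To reduce from $\tau$ to $S$, observe that the same identity is true for $S$ alone, so after integrating one has
\[
S(x) = (-1)^m\lambda\int\Delta^m_y(S;x)\phi(\lambda y)\,\mathrm{d}y + \sum_{j=1}^m(-1)^{j+1}\binom{m}{j}P_j,
\]
with $P_j=\lambda\int S(x+jy)\phi(\lambda y)\,\mathrm{d}y$. The sign inequality $\int\Delta^m_y(\tau;x)\phi(\lambda y)\,\mathrm{d}y\ge 0$, together with the decomposition $\Delta^m_y(\tau;x)=\Delta^m_y(S;x)+\Delta^m_y(F_m;x)$ and the control from condition~(3) of $\mathfrak{T}_m$ (which after the substitution $u=\lambda y$ and using $\lambda\ge 1$ yields $|\lambda\int\Delta^m_y(F_m;x)\phi(\lambda y)\,\mathrm{d}y|\le C_m f_m(x)/\lambda^m$), forces $\lambda\int\Delta^m_y(S;x)\phi(\lambda y)\,\mathrm{d}y\ge -C_m f_m(x)/\lambda^m$. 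Substituting back, for odd $m$ this gives $S(x)\le \sum_j(-1)^{j+1}\binom{m}{j}P_j+C_m f_m(x)/\lambda^m$, and the analogous computation with $\phi(-\lambda y)$ supplies the matching lower bound in terms of $P_{-j}$; the triangle inequality and $\sum_{j=1}^m\binom{m}{j}=2^m-1\le 2^m$ then produce~\eqref{eqqikreshigh}.

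For even $m$, both sign choices of $\phi$ deliver only lower bounds on $S(x)$, so the upper bound must come from a different identity. I would use the shifted version
\[
\tau(x) = -\frac{1}{m}\Delta^m_y(\tau;x-y) + \frac{1}{m}\sum_{\substack{j=0 \\ j\ne 1}}^m(-1)^{m-j}\binom{m}{j}\tau(x+(j-1)y),
\]
obtained by isolating the $j=1$ term in $\Delta^m_y(\tau;x-y)$, whose coefficient is $(-1)^{m-1}m=-m$ for even $m$. Integrating against $\lambda\phi(\lambda y)$ (for even $m$ the condition $y^m\phi(y)\ge 0$ simply says $\phi\ge 0$), the $-1/m$ prefactor combined with the non-negativity of $\Delta^m_y(\tau;x-y)$ when $x-y\ge X$ now delivers the desired upper bound on $\tau(x)$. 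Condition~(4) of $\mathfrak{T}_m$, imposed precisely in the even-$m$ case, bounds the resulting error $|\lambda\int\Delta^m_y(F_m;x-y)\phi(\lambda y)\,\mathrm{d}y|\le C_m f_m(x)/\lambda^m$ just as condition~(3) did. The shifts $x+(j-1)y$ for $j\in\{0,2,\dots,m\}$ correspond to the integrals $P_k$ with $k\in\{-1,1,\dots,m-1\}$, all within the admissible range $|k|\le m$.

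The main obstacle is the sign analysis in the shifted identity: condition~(2) of $\mathfrak{T}_m$ only guarantees $\Delta^m_y(\tau;x-y)\ge 0$ when $x-y\ge X$, while the integration runs over all $y\in\R$. I would split the integral at $y=x-X$; on $y\le x-X$ the sign is correct, and on the tail $y>x-X$ the rapid decay $|\phi(y)|\ll\exp(-|y|^\gamma)$ from Lemma~\ref{lemanalyticcuts} (more generally the integrability $\int|y|^m\exp(|y|^\alpha)|\phi(y)|\,\mathrm{d}y\le C_m$ from the hypotheses) combined with the global bound of condition~(4) on the $F_m$-piece of $\Delta^m_y(\tau;x-y)$ and the support together with the tempered behavior of $S$ from condition~(1) (which makes $\Delta^m_y(S;x-y)$ grow at most polynomially in the shifts that matter) show that this tail contribution is of the same order as $C_m f_m(x)/\lambda^m$ and can be absorbed into the error term. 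Collecting the upper bound from the shifted identity with the lower bound from the original identity and noting $\sum_{j=1}^m\binom{m}{j}\le 2^m$ then yields~\eqref{eqqikreshigh}.
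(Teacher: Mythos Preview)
Your approach is essentially the paper's: for odd $m$ you reproduce Lemma~\ref{lemtaubarg2} with the higher-order difference, and for even $m$ you correctly identify that one must pass to a shifted identity (isolating the $j=1$ term of $\Delta^m_y(\tau;x-y)$), which is exactly the paper's route. The odd case is fine.

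The gap is in your handling of the range constraint in the even case. You propose to split the integral at $y=x-X$ and absorb the tail $y>x-X$ into the error, invoking the ``tempered behavior of $S$ from condition~(1) (which makes $\Delta^m_y(S;x-y)$ grow at most polynomially in the shifts that matter).'' This is not right: condition~(1) only says $e^{-\sigma x}S(x)\in\mathcal{S}'(\R)$ for every $\sigma>0$, which is compatible with growth such as $S(x)\asymp e^{x/\log x}$. On the tail the points $x+(j-1)y$ with $j\ge 2$ are of size $\asymp y$, so $|S(x+(j-1)y)|$ may behave like $e^{\sigma y}$ for any $\sigma>0$, whereas the hypotheses on $\phi$ (or even the stronger $\phi(y)\ll\exp(-|y|^\gamma)$ from Lemma~\ref{lemanalyticcuts}, $\gamma<1$) give only sub-exponential decay. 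The product is not integrable and the tail cannot be absorbed into $C_m f_m(x)/\lambda^m$.

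The paper closes this gap without any tail argument, by the symmetry observation in the footnote: for even $m$ one has $\Delta^m_y(\tau;x')=\Delta^m_{-y}(\tau;x'+my)$, so condition~(2) actually yields $\Delta^m_y(\tau;x')\ge 0$ whenever $x'\ge X$ \emph{or} $x'+my\ge X$. With $x'=x-y$ and $x\ge X$, one of $x-y\ge X$ (for $y\le 0$) or $x+(m-1)y\ge X$ (for $y\ge 0$) always holds, so $\Delta^m_y(\tau;x-y)\ge 0$ for all $y\in\R$. This makes the integral $\int\Delta^m_y(\tau;x-y)\phi(\lambda y)\,\mathrm{d}y$ non-negative over the full line, and your shifted identity then gives the upper bound directly, with condition~(4) of $\mathfrak{T}_m$ supplying the $F_m$-error as you wrote. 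No splitting is needed.
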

We omit the proof which is similar to Lemma \ref{lemtaubarg2}. For even $m$ one cannot use the test function $\phi(-y)$ to get the reverse inequality as $\phi$ is then non-negative everywhere. In this situation one exploits \eqref{eqqiktaubm} in a slightly different way to get both inequalities. For example, for $m = 2$ one may use $\tau(x) \geq 2\tau(x+y) - \tau(x+2y)$ to obtain the lower inequality for $S$ and for the reverse inequality one can use $\tau(x) \leq 2^{-1}(\tau(x-y) + \tau(x+y))$, which is also valid\footnote{For even $m$ the symmetry of $\eqref{eqqiktaubm}$ implies its validity if either $x \geq X$ or $x+my \geq X$. In case $m = 2$, this implies that $\tau(x-y) - 2\tau(x) + \tau(x+y) \geq 0$ is also valid for $x \geq X$ and all $y \in \R$.} for $x \geq X$ and $y \in \R$. The only caveat is that one now needs to apply \eqref{eqqikexpmadj} instead of \eqref{eqqikexpm}. The treatment for general even $m$ is similar.

In view of an eventual one-sided Tauberian theorem in the style of Theorem \ref{thrikimain}, it is technically more accurate to delay the application of the triangular inequality resulting in the factor $2^m$ in \eqref{eqqikreshigh} until after one translated the integrals in terms of the Laplace transform with the argument from section \ref{sqikexact}. Namely, the application of the monotone convergence theorem below may become problematic for the integral $\int^{\infty}_{-\infty} S(x+jy) \phi(\lambda y) \dif y$, but remains valid if $S(x+jy)$ is replaced with $\Delta^{m}_{y}(S; x) +S(x)$, $\Delta^{m}_{-y}(S; x) +S(x)$, $\Delta^{m}_{y}(S; x) -S(x)$ or $ m^{-1}\Delta^{m}_{y}(S;x-y) + S(x)$, the different expressions appearing in the proof of Lemma \ref{lemtaubargm} prior to the application of the triangular inequality.

If $\phi$ satisfies the fourth and fifth property of Lemma \ref{lemanalyticcuts}, then the integrals $\int^{\infty}_{-\infty} S(x+ jy) \phi(\lambda y)\mathrm{d}y$ can be estimated if $\hat{S} \in \mathcal{T}_\ast$ with the argument of the next Section. We also remark that for even $m$, Lemma \ref{lemtaubargm} requires that the third property of Lemma \ref{lemanalyticcuts} be replaced with $\phi(y) \geq 0$ for all real $y$. This adaptation of Lemma \ref{lemanalyticcuts} can be shown via an analogous, but simpler argument than the one presented in section \ref{qiksecextest} and we leave it to the reader.

\begin{remark}
Lemma \ref{lemtaubargm} works well when $m$ is a fixed number. However, in some applications a priori estimates on infinitely many derivatives of $S$ may be available and then it may be advantageous to additionally optimize the parameter $m$. In this regard we mention that the factor $2^m$ in \eqref{eqqikreshigh} can sometimes be significantly improved. Rather than exploiting \eqref{eqqiktaubm} directly to get an inequality for $\tau(x)$ in terms of its translations, one can apply it to $\Delta^{m}_{y}(\tau; x- \ell y)$ (or $\Delta^{m}_{-y}(\tau; x+ \ell y)$), where $\ell$ is a natural number, although one might need to slightly alter the definition of $\mathfrak{T}_m$ to certify one can apply the inequality \eqref{eqqiktaubm} and \eqref{eqqikexpm} to this expression for all $x \geq X$ and $y \in \R$. The advantage is that the coefficient of the term $\tau(x)$ is larger than $1$ which leads to an improvement. Taking $\ell$ in the vicinity of $m/2$, the factor $2^m$ in \eqref{eqqikreshigh} becomes the total number of terms in the sum of \eqref{eqqiktaubm} divided by the coefficient of the term $\tau(x)$ and which is about $2^m/{m \choose m/2}\ll \sqrt{m}$ in view of Stirling's formula.

Yet, more significant is the quantity $C_m$ which will generally grow much faster. If one requires that $\hat{\phi}$ is compactly supported, then Lemma \ref{lemanalyticcuts} allows one to pick $\phi$ such that $\phi(y) \ll_{\beta} \exp(-|y|^\beta)$ for some\footnote{In principle the Denjoy-Carleman theorem offers a slightly better rate.} $\alpha < \beta < 1$ such that $C_m \ll_{\alpha,\beta} A_{\alpha,\beta}^m (m!)^{1/\beta}$ for some $A_{\alpha,\beta} > 0$.

In some circumstances the quality of the estimate for $C_m$ can be improved if one relaxes the assumption of the support of $\hat{\phi}$. Namely, for certain hypotheses imposed on the Laplace transforms of $S$, comparable or slightly worse estimates to the ones achieved in Section \ref{secest} can be obtained for $\int^{\infty}_{-\infty} S(x+ y) \phi(\lambda y)\mathrm{d}y$ if $\hat{\phi}(y)$ is only expected to obey a certain decay rate instead of being compactly supported. Then, depending on the decay of $\hat{\phi}$, the admissible decay for $\phi$ may be strengthened, perhaps even to $\phi$ being compactly supported, and this leads to better estimates for $C_m$. In this context, it is worthwhile to raise the following question which is analogous to Lemma \ref{lemanalyticcuts}.

\begin{openproblem} Given $\xi: \R_+ \rightarrow \R$ a non-decreasing function. What is the best possible sequence $M_n$ such that there exists a sequence of real-valued functions $\psi_n \in \mathcal{S}(\R)$ for which $\psi_n(0) = 1$, $|\psi_n(t)| \leq \exp(-\xi(|t|))$ and
$$ |\psi_n^{(n)}(t)| \leq M_n, \ \ \ \text{for all }n \in \N \text{ and all } t \in \R? 
$$ 
\end{openproblem}
This question is related to the non-triviality of spaces of type $\mathcal{S}$, sometimes also referred to as \emph{Gelfand-Shilov} spaces.

\end{remark}

\section{The estimation of the low frequencies: Completion of the proof of Theorem \ref{thrikimain}} \label{secest}
                     
\subsection{Translation of the convolution: the exact behavior}  \label{sqikexact}                   
                                                                                                                                      
As follows from Lemma \ref{lemtaubarg2}, our remaining task is to estimate $\int^{\infty}_{-\infty} S(x+y) \phi(\lambda y) \mathrm{d}y$ by exploiting the information of the class $\mathcal{T}_\ast$ which the boundary distribution of the Laplace transform of $S$ belongs to. We first translate this integral expression in terms of the Fourier transform of $S$. In the argument below we assume without loss of generality that $S$ is real-valued.

Let $\phi \in \mathcal{S}(\R)$ satisfy the properties from Lemma \ref{lemtaubarg2} and have a Fourier transform supported on $[-1,1]$. We observe that \eqref{eqcructaubarg} implies that $F(x) \ll_{F,X,f} (1+|x|)\exp(|x|^{\alpha})$. Hence, from the convergence of $\mathcal{L}\{S;s\}$ on $\Re s > 0$ and the Tauberian condition that $S(x) + F(x)$ is non-decreasing (from $X$ onwards), one may verify that $S(x) \ll_{S,\sigma} e^{\sigma x}$ for $\sigma > 0$ and $x \geq X$ and thus that $e^{-\sigma x}S(x) \in \mathcal{S}'(\R)$ for each $\sigma > 0$ , see e.g. the argument at the beginning of the proof of Theorem 2.1 in \cite{d-simpletauberian}. We obtain for $\lambda \geq 1$ and $x \geq X$ that
 \begin{align*}
 \lambda \int^{\infty}_{-\infty} S(x+y) \phi(\lambda y) \mathrm{d}y & = \lim_{\sigma \rightarrow 0^{+}} \lambda \left(\int^{\infty}_{0} + \int^{0}_{-x}\right) e^{-\sigma (x+y)}\left(S(x+y) + F(x+y)\right) \phi(\lambda y) \mathrm{d}y \\
& \quad - \lim_{\sigma \rightarrow 0^{+}} \lambda \int^{\infty}_{-\infty} e^{-\sigma (x+y)} F(x+y) \phi(\lambda y) \mathrm{d}y\\
& = \lim_{\sigma \rightarrow 0^{+}} \lambda \int^{\infty}_{-\infty} e^{-\sigma (x+y)}S(x+y) \phi(\lambda y) \mathrm{d}y \\
& = \lim_{\sigma \rightarrow 0^{+}}\frac{1}{2\pi}\int^{\infty}_{-\infty} \mathcal{L}\{S;\sigma + it\} e^{ixt} \hat{\phi}(-t/\lambda)\mathrm{d}t\\
& = \frac{1}{2\pi}\left\langle g(t), e^{ixt} \hat{\phi}(-t/\lambda)\right\rangle,
\end{align*}
where the switch of limit and integral in the first line is justified by dominated convergence on the interval $(-x,0)$ and by monotone convergence on $(0,\infty)$; notice that the fourth property of $\mathfrak{T}$ implies that $S(x) + F(x) \geq 0$ for $x \geq X$. The switch of limit and integral for the integral involving only $F$ is justified by dominated convergence and the second property of Lemma \ref{lemtaubarg2}.
\par
In the remainder of this Section, we derive estimates for $\left\langle g(t), e^{ixt} \hat{\phi}(-t/\lambda)\right\rangle$ separately for each class $\mathcal{T}_\ast$ concluding the proof of Theorem \ref{thrikimain}. The integral $\lambda \int^{\infty}_{-\infty} S(x+y) \phi(-\lambda y) \mathrm{d}y = (2\pi)^{-1}\left\langle g(t), e^{ixt} \hat{\phi}(t/\lambda)\right\rangle$ can be estimated analogously and its analysis shall be omitted.

We shall not explicitly treat $\mathcal{T}_{\mathrm{DifI}}$, $\mathcal{T}_{\mathrm{HCI}}$ and $\mathcal{T}_{\mathrm{SAI}}$. Their analysis is entirely analogous to $\mathcal{T}_{\mathrm{Dif}}$, $\mathcal{T}_{\mathrm{HC}}$ and $\mathcal{T}_{\mathrm{SA}}$ respectively, with the only exception that one avoids using H\"older's inequality.

Before we complete the proof of the main Theorem \ref{thrikimain}, we take a slight detour to briefly discuss the \emph{exact behavior} of the quantified Ingham-Karamata theorem under flexible Tauberian conditions.

\begin{remark} The above argument and Lemma \ref{lemtaubarg2} establish the exact behavior for the Laplace transform of the quantified Ingham-Karamata theorem under the flexible one-sided Tauberian condition $\mathfrak{T}$.

Indeed, suppose that $E(x): \R \rightarrow \R_+$ is an admissible estimate for $S$, that is $S(x) \ll E(x)$, which satisfies the regularity assumption $E(x+ y)/E(x) \ll 1 + |y|^{\gamma}$ for some $\gamma > 0$; in particular $E$ is polynomially bounded and does not decay faster than the inverse of all polynomials. Then, for $\lambda \geq 1$, these assumptions yield
$$ \lambda \int^{\infty}_{-\infty} S(x + y) \phi(\lambda y) \dif y \ll_\phi  E(x), \ \ \ \text{for all } \phi \in \mathcal{S}(\R),
$$ 
as elements from $\mathcal{S}(\R)$ decay faster than any polynomial. Applying the Fourier transform, we obtain in particular,
\begin{equation} \label{eqqikexact} \left\langle g(t), e^{ixt} \varphi(t/\lambda)\right\rangle \ll_\varphi E(x), \ \ \ \text{for all } \varphi \in \mathcal{D}(\R),
\end{equation} 
where $g$ is the boundary distribution of the Laplace transform of $S$ on the line $i\R$. We emphasize that the implicit constant in \eqref{eqqikexact} is independent of $\lambda$.

Conversely, if $\eqref{eqqikexact}$ is fulfilled for some function $\lambda = \lambda(x) \geq f(x)/E(x)$, then Lemma \ref{lemtaubarg2} and the discussion at the beginning of this section imply that $S(x) \ll_{S} E(x)$. In other words, if $S$ satisfies the Tauberian condition $\mathfrak{T}$ and $E$ obeys $E(x+ y)/E(x) \ll 1 + |y|^{\gamma}$, then $S(x) \ll_{S} E(x)$ is valid if and only if \eqref{eqqikexact} holds for all functions $\lambda(x) \geq f(x)/E(x)$ (where the implicit constant in \eqref{eqqikexact} depends additionally on $S$). Therefore, \eqref{eqqikexact} represents the exact behavior of Theorem \ref{thrikimain}, provided the regularity assumption on $E$.

\medskip

Naturally, one may replace in the exact behavior the validity of \eqref{eqqikexact} for all functions $\lambda \geq f(x)/E(x)$ with the validity for some function $\lambda \geq f(x)/E(x)$. One may also replace in \eqref{eqqikexact} the class $\mathcal{D}(\R)$ with $\mathcal{S}(\R)$ or even with a set of two functions $\{\varphi_{1},\varphi_{2}\}$ for which the Fourier transforms $\hat{\varphi}_1(x)$ and $\hat{\varphi}_2(-x)$ satisfy the hypotheses of Lemma \ref{lemtaubarg2} for $\phi$. We decided to state \eqref{eqqikexact} with the class $\mathcal{D}(\R)$ as this closely resembles the \emph{local pseudofunction} exact behavior of the unquantified Ingham-Karamata theorem, where $E(x)$ in \eqref{eqqikexact} is replaced with a $o(1)$-function. Furthermore, requiring that $\varphi$ is compactly supported is often advantageous in applications, as demonstrated by our present study, but also by the works \cite{Debruyne-VindasPNTEquivalences, d-v-l1} in the unquantified setting.

\medskip

If $E(x)$ decays faster than any polynomial, then $S(x) \ll E(x)$ no longer guarantees \eqref{eqqikexact} for all $\varphi \in \mathcal{D}(\R)$, since the decay of $\phi$ may be too slow to witness $\lambda \int^{\infty}_{-\infty} S(x + y) \phi(\lambda y) \dif y \ll_\phi  E(x)$. A solution is to lay further restrictions on the space $\mathcal{D}(\R)$. If the regularity assumption for $E$ is replaced with $E(x+ y)/E(x) \ll 1 + \exp(|y|^{\alpha})$, for some $\alpha < 1$, then, mutatis mutandis, one can derive \eqref{eqqikexact} for all $\varphi \in \mathcal{D}_{x^\alpha}(\R)$ from $S(x) \ll E(x)$. Here $\mathcal{D}_{x^\alpha}(\R)$ consists of all smooth compactly supported functions $\varphi$ whose Fourier transform obey $\hat{\varphi}(x) \ll \exp(-c |x|^{\alpha})$ for all $c > 0$. As follows by the Denjoy-Carleman theorem this space is non-trivial if $\alpha < 1$.

If $E(x)$ decays even faster, such as $E(x) = \exp(-x)$, or any other (quasi-analytic) rate $E(x)$ such that $\hat{\varphi}(x) \ll E(x)$ inhibits the existence of a non-trivial compactly supported $\varphi(t)$, then it is no longer possible to achieve the exact behavior \eqref{eqqikexact} with respect to a class of compactly supported functions $\varphi$. The exact behavior may still be reformulated in another way. For example, if the regularity condition on $E$ is relaxed to $E(x+ y)/ E(x) \ll_C 1$ for all $|y| \leq C$, then $S(x) \ll E(x)$ implies \eqref{eqqikexact} for all $\varphi \in \mathcal{F}(\mathcal{D}(\R))$, yet the lack of compactly supported functions in this space can be a significant challenge in applications. 

\medskip

Finally, we comment that the function $\varphi$ in \eqref{eqqikexact} may be allowed to depend on $x$. Namely, if for each (sufficiently large) $x$, there exist $\varphi_{1,x}, \varphi_{2,x} \in \mathcal{D}(\R)$ whose Fourier transforms satisfy the hypotheses of Lemma \ref{lemtaubarg2} for $\phi$ and, for some $\lambda = \lambda(x) \geq f(x)/E(x)$ there holds
\begin{equation} \label{eqqikexact2} \left\langle g(t), e^{ixt} \varphi_{1,x}(t/\lambda)\right\rangle \ll E(x), \ \ \  \left\langle g(t), e^{ixt} \varphi_{2,x}(-t/\lambda)\right\rangle \ll E(x),
\end{equation}
then the Tauberian condition $\mathfrak{T}$, Lemma \ref{lemtaubarg2} and the discussion at the beginning of this section imply that $S(x) \ll E(x)$. Of course it is important here that the implicit constants in \eqref{eqqikexact2} are independent of $x$. The observation that we may allow the test function $\varphi$ to depend on $x$ plays a crucial role in our treatment of the class $\mathcal{T}_{\mathrm{An}}$ below.
 
\end{remark}

\subsection{Differentiable functions: the class $\mathcal{T}_{\mathrm{Dif}}$}

In this section we assume that $g \in \mathcal{T}_{\mathrm{Dif}}(N,G,p,D)$, $\lambda \geq 1$ and $\hat{\phi} \in \mathcal{D}(-1,1)$. 

We first estimate $g^{(j)}$ in terms of $g^{(N)}$ where $j \leq N$. By successively applying the formula $g^{(n)}(t) = g^{(n)}(0) + \int^{t}_{0} g^{(n+1)}(u) \dif u$ repeatedly, we obtain for $0 \leq j \leq N$, 
\begin{equation*}
\int^{\lambda}_{-\lambda} \left|g^{(j)}(t)\right| \mathrm{d}t \ll \sum_{\ell = 0}^{N-j-1}\lambda^{\ell+1} |g^{(j+\ell)}(0)| + \lambda^{N-j} \int^{\lambda}_{-\lambda} \left|g^{(N)}(t)\right| \mathrm{d}t.
\end{equation*}
Therefore, integrating by parts $N$ times gives
\begin{align*}
 \left|\left\langle g(t), e^{ixt} \hat{\phi}(-t/\lambda)\right\rangle\right| & = \left|\frac{1}{x^{N}} \int^{\lambda}_{-\lambda} e^{ixt}  \left(g(t) \hat{\phi}(-t/\lambda)\right)^{(N)} \mathrm{d}t \right|\\
& \ll_{N,\phi} \frac{1}{x^{N}} \max_{0 \leq j \leq N} \lambda^{j-N} \int^{\lambda}_{-\lambda} \left|g^{(j)}(t)\right| \mathrm{d}t\\
&\ll_{N,D} \frac{1}{x^{N}}\left( 1 +  \int^{\lambda}_{-\lambda} |g^{(N)}(t)| \mathrm{d}t\right).
\end{align*}
An application of H\"older's inequality now yields
\begin{equation} \label{eqrfrdif}
 \left|\left\langle g(t), e^{ixt} \hat{\phi}(-t/\lambda)\right\rangle\right| \ll_{N,\phi,D} \frac{1}{x^{N}}+ \frac{\left\| g^{(N)}/G\right\|_{L^{p}}}{x^{N}} \left\| G \chi_{(-\lambda,\lambda)}\right\|_{L^{q}},
 \end{equation}
where $q$ is the conjugate index of $p$. 

Combining \eqref{eqrfrdif} with Lemma \ref{lemtaubarg2} concludes the proof of Theorem \ref{thrikimain} for the class $\mathcal{T}_{\mathrm{Dif}}$.
	
\subsection{H\"older continuous functions: the class $\mathcal{T}_{\mathrm{HC}}$}
	
In this section we assume $g \in \mathcal{T}_{\mathrm{HC}}(N,G,p,\omega,D,\delta_0)$, $\lambda \geq 1$ and $\hat{\phi} \in \mathcal{D}(-1/2, 1/2)$. The support of $\hat{\phi}$ is restricted further mainly for technical convenience. Note that this has as consequence that the admissible value for $A$ in Lemma \ref{lemanalyticcuts} is doubled.

We begin by discussing $\mathcal{T}_{\mathrm{HC}}$. Recall that a function $\mathfrak{g}(t): \R \rightarrow \C$ is called uniformly H\"older continuous with respect to $L^{p}$, $G: \R \rightarrow (0,\infty)$ and non-decreasing $\omega: (0,\delta_0] \rightarrow \R^{+}$ if 
\begin{equation} \label{eqrfruhc}
 \left\|\frac{\sup_{\left|u\right| \leq \delta}\left|\mathfrak{g}(t+u) - \mathfrak{g}(t)\right|}{G(t)}\right\|_{L^{p}(\R)} \ll \omega(\delta), \quad 0 < \delta \leq\delta_{0},
\end{equation}
where the $L^{p}$-norm is taken with respect to the variable $t$. In particular, if $\omega(\delta) = o(1)$ as $\delta \rightarrow 0+$, then $k \in \mathcal{T}_{\mathrm{HC}}(N,G,p,\omega,D,\delta_0)$ implies that $\mathfrak{g}$ is continuous.

Now, if $\omega(y_k) = o(y_k)$ for a sequence $y_k \rightarrow 0^{+}$ and if for each compact set $K \subset \R$, $G$ satisfies the mild restriction 
\begin{equation} \label{eqqikauxcondhcong} \sup_{t \in K} \sup_{|u| \leq \delta_1}  \left|\frac{G(t+ u)}{G(t)} \right| < \infty,
\end{equation}
for some $\delta_1 > 0$, then one can show that $\mathfrak{g}$ must be constant\footnote{The same comment applies to $\mathcal{T}_{\mathrm{HCI}}$. Then one does not even need to impose \eqref{eqqikauxcondhcong} on $G$.}. (One shows that the norm in the left-hand side of \eqref{eqrfruhc} with $j \delta_2 + \delta_3$ replacing $\delta$ and $L^{p}(K)$ replacing $L^{p}(\R)$ is $\ll_{K,G} j\omega(\delta_2) + \omega(\delta_3)$ for all $0 < \delta_{2}, \delta_3 \leq \delta_0,$ and all natural $j \ll \delta_2^{-1}$. Together with the non-decrease of $\omega$ and $\omega(y_k) = o(y_k)$ this implies that an admissible $\omega$ in \eqref{eqrfruhc} is $\omega(\delta) \equiv 0$ from which it follows that $\mathfrak{g}$ is constant.) Therefore, the assumption $\omega(y) \gg_\omega y$ as $y \rightarrow 0^{+}$ that was imposed in the definition of $\mathcal{T}_{\mathrm{HC}}$ is not too significant. 

We shall need an estimate for $\int^{\lambda}_{-\lambda} |\mathfrak{g}(t)| \mathrm{d}t$ if $\mathfrak{g}$ satisfies \eqref{eqrfruhc}. Let $0 < \delta \leq \delta_0$. We have
\begin{align*}
 \int^{\lambda}_{0} |\mathfrak{g}(t)| \mathrm{d}t & \leq |\mathfrak{g}(0)| \lambda  + \int^{\lambda}_{0} |\mathfrak{g}(t) - \mathfrak{g}(0)| \mathrm{d}t \\
 &  \leq |\mathfrak{g}(0)| \lambda + \sum_{j = 0}^{\lfloor \lambda/\delta\rfloor-1} \int^{(j+1) \delta}_{j\delta} |\mathfrak{g}(t) - \mathfrak{g}(0)| \mathrm{d}t + \int^{\lambda}_{\lfloor \lambda/\delta\rfloor \delta}  |\mathfrak{g}(t) - \mathfrak{g}(0)|\mathrm{d}t.
\end{align*}
The integral from $j\delta$ to $(j+1)\delta$ is estimated as
\begin{align*} \int^{(j+1) \delta}_{j\delta} |\mathfrak{g}(t) - \mathfrak{g}(0)| \mathrm{d}t & \leq \int^{(j+1)\delta}_{j\delta} \sum_{\ell = 0}^{j-1} |\mathfrak{g}(t - \ell\delta) - \mathfrak{g}(t-(\ell+1)\delta)| + |\mathfrak{g}(t-j\delta) - \mathfrak{g}(0)| \mathrm{d}t. \displaybreak[1] \\
& \leq \sum_{\ell = 0}^{j-1} \int^{(j+1-\ell)\delta}_{(j-\ell)\delta} |\mathfrak{g}(t) - \mathfrak{g}(t-\delta)|\mathrm{d}t + \int_{0}^{\delta} |\mathfrak{g}(t) - \mathfrak{g}(0)| \mathrm{d}t \displaybreak[1] \\ 
& \leq \int^{(j+1)\delta}_{0}\sup_{|u| \leq \delta}|\mathfrak{g}(t+u) - \mathfrak{g}(t)|\mathrm{d}t \ll \omega(\delta) \left(\int^{(j+1)\delta}_{0} G(t)^{q} \mathrm{d}t\right)^{1/q},
\end{align*}
via H\"older's inequality and \eqref{eqrfruhc}. The integral from $\lfloor \lambda/\delta\rfloor \delta$ to $\lambda$ is estimated analogously and we obtain for $\lambda > \delta$ that
\begin{equation} \label{eqrfruhcaux}
 \int^{\lambda}_{0} |\mathfrak{g}(t)| \mathrm{d}t \ll_{\omega,\delta} |\mathfrak{g}(0)| \lambda + \lambda \left\| G \chi_{(-\lambda,\lambda)}\right\|_{L^{q}}. 
\end{equation}

\medskip

Now we come to the estimation of $\langle g(t), e^{ixt} \hat{\phi}(-t/\lambda)\rangle$ if $g \in \mathcal{T}_{\mathrm{HC}}$ and $\hat{\phi} \in \mathcal{D}(-1/2,1/2)$. Integrating by parts $N$ times gives
\begin{equation*}
\left|\left\langle g(t), e^{ixt} \hat{\phi}(-t/\lambda)\right\rangle\right| \ll_{N} \frac{1}{x^{N}} \max_{0 \leq j \leq N} \left|\int^{\lambda/2}_{-\lambda/2} g^{(j)}(t) e^{ixt} \hat{\phi}^{(N-j)}(-t/\lambda) \lambda^{-N+j} \mathrm{d}t\right|.
\end{equation*}
We focus first on $j = N$. Due to the symmetry of $e^{ixt}$, we may write
\begin{align*}
\int^{\lambda/2}_{-\lambda/2} g^{(N)}(t) e^{ixt} \hat{\phi}(-t/\lambda) \mathrm{d}t 
& = -\int^{\lambda/2-\pi/x}_{-\lambda/2-\pi/x} g^{(N)}(t+\pi/x) e^{ixt} \hat{\phi}(-(t+\pi/x)\lambda^{-1})\mathrm{d}t. 
\end{align*}
Hence, if $x \geq \pi/\delta_{0}$ and $\lambda \geq 2\pi/x$ (which is implied by $\lambda \geq 1$ if $x \geq 2\pi$),
\begin{align*}
\left|\int^{\lambda/2}_{-\lambda/2} g^{(N)}(t) e^{ixt} \hat{\phi}(-t/\lambda) \mathrm{d}t\right| & \leq \frac{1}{2} \int^{\infty}_{-\infty} \left| g^{(N)}(t) \hat{\phi}(-t/\lambda) - g^{(N)}(t+\pi/x) \hat{\phi}(-(t+\pi/x)\lambda^{-1})\right|\mathrm{d}t\\
& \leq \frac{1}{2} \int^{\infty}_{-\infty} \left| g^{(N)}(t) - g^{(N)}(t+\pi/x)\right| \left|\hat{\phi}(-t/\lambda)\right| \mathrm{d}t \\
& \ \ + \frac{1}{2} \int^{\infty}_{-\infty} \left|g^{(N)}(t+\pi/x)\right| \left|\hat{\phi}(-t/\lambda)- \hat{\phi}(-(t+\pi/x)\lambda^{-1})\right| \mathrm{d}t \\
& \ll_{\phi}  \int^{\lambda/2}_{-\lambda/2}  \left| g^{(N)}(t) - g^{(N)}(t+\pi/x)\right| \mathrm{d}t + \frac{1}{x\lambda} \int^{\lambda}_{-\lambda} \left|g^{(N)}(t)\right| \mathrm{d}t\\
& \ll_{\omega,D,\delta_{0}} \left( \omega(\pi/x) + \frac{1}{x}\right)  \left\| G \chi_{(-\lambda,\lambda)}\right\|_{L^{q}} + \frac{1}{x},
\end{align*}
where in the last transition we used \eqref{eqrfruhc}, \eqref{eqrfruhcaux} and H\"older's inequality. 

For the other $j$, we imitate the above argument with $\hat{\phi}^{(N-j)}$ instead of $\hat{\phi}$ and $g^{(j)}$ instead of $g^{(N)}$ and use at the end the recursion relations 
  \begin{align*}                  
		\int^{\lambda}_{-\lambda} \left|g^{(j)}(t)\right|	\mathrm{d}t & \ll_{D} \lambda\left(1 + \int^{\lambda}_{-\lambda} \left|g^{(j+1)}(t)\right|	\mathrm{d}t\right),	\\
	\int^{\lambda/2}_{-\lambda/2} \left|g^{(j)}(t+\pi/x) - g^{(j)}(t)\right|	\mathrm{d}t	& \ll \frac{1}{x} \int^{\lambda}_{-\lambda} \left|g^{(j+1)}(t)\right|	\mathrm{d}t,		
	\end{align*}
	valid for $0 \leq j < N$ and all $\lambda \geq 2\pi/x$.

Collecting these estimates and taking into account that $\omega(\pi/x) \gg_\omega 1/x$, we obtain
\begin{equation} \label{eqrikhcres}
 \left\langle g(t), e^{ixt} \hat{\phi}(-t/\lambda)\right\rangle \ll_{N,\phi,\omega,D,\delta_{0}} \frac{\omega(\pi/x)}{x^{N}} \left( 1 +  \left\| G \chi_{(-\lambda,\lambda)}\right\|_{L^{q}} \right). 
\end{equation}
	
\subsection{Ultradifferentiable functions: the class $\mathcal{T}_{\mathrm{SA}}$} \label{secriknqu}

In this section we no longer restrict ourselves to a finite number of derivatives of the Laplace transform, but we consider all derivatives at once. We let $M_n$ be a logarithmically convex subanalytic sequence, that is, $M_n$ satisfies (M.1) and there exist $C', L > 0$ such that $M_{n} \geq C' L^{n} n^{n}$ for all natural $n$. We note that the constants $C$ and $A$ refer in this section to \eqref{eqanalyticcuts}. We suppose that $g \in \mathcal{T}_{\mathrm{SA}}(M_n,B,G,H,p)$, namely that $g$ is smooth and satisfies \eqref{equltradif} and \eqref{eqaux} for some locally integrable functions $G_n$. We let $\phi_n$ be a test function satisfying the properties of Lemma \ref{lemanalyticcuts}, for an $n$ that will be judiciously chosen (depending on $x$ and $\lambda$). 

We may suppose without loss of generality that $M_0 = 1$. Then the logarithmic convexity of $M_n$ implies that the sequence $M_j^{1/j}$ is non-decreasing. Integrating by parts $n$ times, applying H\"older's inequality, inserting the bounds \eqref{equltradif}, \eqref{eqaux} and \eqref{eqanalyticcuts}, and exploiting the subanalyticity of $M_n$, we find
\begin{align*}
 \left|\left\langle g(t), e^{ixt} \hat{\phi}_{n}(-t/\lambda)\right\rangle\right| & \ll_{C} {x^{-n}} \sum_{j = 0}^{n} {n \choose j} B^{j}M_{j}\frac{A^{n-j}n^{n-j}}{\lambda^{n-j}}  \left\| G_j \chi_{(-\lambda,\lambda)}\right\|_{L^{q}} \\
 & \ll_{C'} x^{-n} \sum_{j = 0}^{n} {n \choose j} B^{j}M_{n}^{j/n}(A/L\lambda)^{n-j}M_{n}^{(n-j)/n} G(\lambda)^{j}H(\lambda) \\
& \ll x^{-n} \left(B G(\lambda)+ \frac{A}{L\lambda}\right)^{n} M_{n} H(\lambda).
\end{align*}

Now we pick $n$, depending on $x$ and $\lambda$ (and $B$, the sequence $M_n$, $A, L$ and $G$) such that the final quantity is minimized. In conclusion, we obtain that for each $x \geq X$ and $\lambda \geq 1$, there exists $n$ such that
\begin{equation} \label{eqressuban}
 \left|\left\langle g(t), e^{ixt} \hat{\phi}_{n}(-t/\lambda)\right\rangle\right| \ll_{C,C'} \exp\left(-M\left(\frac{x}{BG(\lambda)+AL^{-1}\lambda^{-1}}\right)\right)H(\lambda),
\end{equation}
where $M$ is the associated function of the sequence $M_n$. We emphasize that the implicit constant in \eqref{eqressuban} is independent of $x$ and $\lambda$. Because of the properties of the sequence $\phi_n$, Lemma \ref{lemtaubarg2} may be applied. Observe crucially that all the implicit constants in the error terms remain independent of $x$ and $\lambda$, hence achieving \eqref{eqrikires} for $g \in \mathcal{T}_{\mathrm{SA}}$.
\begin{remark} \label{qikremnotherform} \ 
\begin{enumerate}
\item The assumption \eqref{eqaux} may at first appear a bit arbitrary, but these exponential bounds do often appear in practice, see e.g. the discussion on analytic functions in the next section. The main advantage of this expression is that it allows us in \eqref{eqressuban} to compute the infimum over $n$ in terms of the associated function $M$.

On the other hand, the same argument as above can be performed if in \eqref{eqaux} we replace $G(\lambda)^{n}H(\lambda)$ with the more general $H_n(\lambda)$, for a sequence of non-decreasing functions $H_n(\lambda)$, say this generalization corresponds to a class $\mathcal{T}_{\mathrm{SAA}}$ of assumptions for the Laplace transform. Then, our adjusted error term $E_{\mathrm{SAA}}$ in Theorem \ref{thrikimain} becomes
\begin{equation} \label{eqqiksaaerror}
E_{\mathrm{SAA}}(M_n,B, H_n,p,x,\lambda) \ll_{M_n} \inf_{n \in \N} x^{-n} M_n \sum_{j = 0}^{n} {n \choose j} \frac{B^{j}A^{n-j}}{L^{n-j}\lambda^{n-j}}  H_j(\lambda).
\end{equation}
In some instances $\mathcal{T}_{\mathrm{SAA}}$ indeed leads to a better error term in Theorem \ref{thrikimain}, but usually only when the error term in \eqref{eqrikires} decays rather fast. For example, in section \ref{qiksecanal}, the assertion \eqref{eqmkest} under the eventual non-decrease of $M(t)/\log^{\beta}t$ in Theorem \ref{coran} shall be established with this more general formulation.
\item If the sequence $M_n$ is non-quasianalytic, one does not require the subtle argument involving a sequence $\phi_n$ of test functions, but one can pick a single test function $\phi$ since the estimate \eqref{eqderivfour} can be used instead of \eqref{eqanalyticcuts}. 

In addition, if, for a non-quasianalytic $M_n$, the sequence $Q_n = M_n/n!$ is logarithmically convex and $G$ is bounded from below by a positive real number, say $\mathfrak{W}$, then one may replace $BG(\lambda)+A/L\lambda$ in \eqref{eqressuban} with\footnote{The implicit constant in \eqref{eqressuban} then depends also on $B$, $\mathfrak{W}$ and the sequence $M_n$.} $BG(\lambda)$. Namely, replacing the bounds $|\hat{\phi}_n^{(j)}(t)| \ll A^j n^j$ with $|\hat{\phi}^{(j)}(t)| \ll_{B, \mathfrak{W}, M_n} B^j \mathfrak{W}^{j} 2^{-j}M_j$ yields
\begin{align*}
 \left|\left\langle g(t), e^{ixt} \hat{\phi}(-t/\lambda)\right\rangle\right| & \ll_{B,\mathfrak{W}, M_n} {x^{-n}} \sum_{j = 0}^{n} {n \choose j} B^{j}M_{j}\frac{B^{n-j}M_{n-j}}{2^{n-j}\lambda^{n-j}} G(\lambda)^{j} \mathfrak{W}^{n-j} H(\lambda) \\
& \ll x^{-n} B^n M_n G(\lambda)^{n}H(\lambda) \sum_{j = 0}^{n} \frac{Q_j Q_{n-j}}{Q_n} 2^{j-n} \\ 
& \ll_{Q_{0}} x^{-n} B^n M_n G(\lambda)^{n}H(\lambda).
\end{align*}

This estimation breaks down for general subanalytic sequences $M_n$ as the sum $\sum_{j= 0}^n {n \choose j} M_j n^{n-j}/M_n$ is not uniformly bounded in $n$. The underlying reason for its failure is that $|\hat{\phi}_n^{(j)}(t)| \ll A^j j!$ does not hold for \emph{all} $j \leq n$, and must be false for some derivatives. Therefore, unless one wishes to invoke a growth requirement on $\lambda$ to control the derivatives of $\phi_n(t/\lambda)$, typically leading to hypotheses such as \eqref{eqqikcondstahn}, one must accept a small extra contribution.

In any case, as we will see in section \ref{qiksecanal}, not quite achieving $BG(\lambda)$ in \eqref{eqressuban} turns out to be irrelevant under very mild constraints on the functions $G$ and $H$.
\end{enumerate}
\end{remark}

\subsection{Analytic functions: the class $\mathcal{T}_{\mathrm{An}}$} \label{qiksecanal}

We now turn to the important case when the Laplace transform admits an analytic extension. We reduce its analysis to that of $\mathcal{T}_{\mathrm{SA}}$. We recall that $G : \R_+ \rightarrow \R_+$ is continuous and non-decreasing with $G(0) \neq 0$ and we set $\tilde{H}(t) := \sup_{|t-u| \leq 2 G(0)\inv} H(u)$.

Let $g \in \mathcal{T}_{\mathrm{An}}(G,H)$. It suffices to estimate the derivatives $g^{(n)}(t)$ for $t > 0$, the estimates for negative $t$ being analogous.  
Via Cauchy's theorem
\begin{equation} \label{eqcauchy}
\left|g^{(n)}(t)\right| = \frac{n!}{2\pi}  \left|\int_{\Gamma} \frac{g(z)}{(z-t)^{n+1}} \mathrm{d}z \right|,
\end{equation}
where $\Gamma$ is the square whose center $t$ lies at distance $1/G(t+G(0)\inv)$ from the sides and in which $g$ is analytic---the reason for taking a square instead of a circle is that we shave off an extra factor $\sqrt{n}$ below. As $|g(z)|$ is bounded by $\tilde{H}(t)$ on this square, we find
\begin{align*}
 \frac{n!}{2\pi}  \left|\int_{\Gamma} \frac{g(z)}{(z-t)^{n+1}}\mathrm{d}z\right| & \leq \frac{2}{\pi} n! \tilde{H}(t) \int^{\infty}_{-\infty} \frac{1}{((1/G(t+G(0)\inv))^{2} + w^{2})^{(n+1)/2}} \mathrm{d}w \\
& = \frac{2}{\pi}n! \tilde{H}(t) G(t+G(0)\inv)^{n} \int^{\infty}_{-\infty} \frac{\mathrm{d}w}{(1+w^{2})^{(n+1)/2}} \\
& \ll \frac{n!}{\sqrt{n}} \tilde{H}(t) G(t+G(0)\inv)^{n}, 
\end{align*}
since for natural $\ell$ (and hence the $O$-estimate also holds for real $\ell \geq 1$, say),
\begin{equation*}
 \int^{\infty}_{-\infty} \frac{\mathrm{d}w}{(1+w^{2})^{\ell}} = 2\pi i\operatorname*{res}_{w = i} (1+w^{2})^{-\ell} = \frac{\pi (2\ell-2)!}{(\ell-1)!^{2}2^{2\ell-2}} = O\left(\frac{1}{\sqrt{\ell}}\right),
\end{equation*}
via Stirling's formula. Therefore, by another application of Stirling's formula,
\begin{equation} \label{eqrealestimate}
 \left|g^{(n)}(t)\right| \ll n^{n}e^{-n}G(t+G(0)\inv)^{n}\tilde{H}(t),\ \ \ \text{for all } n \in \N.
\end{equation}

We now apply Theorem \ref{thrikimain} to $\mathcal{T}_{\mathrm{SA}}(n^n,e^{-1}, G(. + G(0)\inv), \int^{.}_{0}\tilde{H}, \infty)$ with $G_n(t) = G(t+G(0)\inv)^{n} \tilde{H}(t)$. We conclude that for each $x \geq X$ and $\lambda \geq 1$, there exists $\phi_{n} \in \mathcal{F}(\mathcal{D}(-1,1))$ (which may depend on $x$ and $\lambda$) satisfying the properties of Lemma \ref{lemtaubarg2} such that
\begin{equation} \label{eqestimateanalytic}
 \left|\left\langle g(t), e^{ixt} \hat{\phi}_{n}(-t/\lambda)\right\rangle\right|  \ll_{C}  \exp\left(-\frac{x}{G(\lambda+G(0)\inv) + Ae/\lambda }\right) \int^{\lambda}_{0} \tilde{H}(t) \mathrm{d}t.
\end{equation}
Here $C$ and $A$ are those occurring in Lemma \ref{lemanalyticcuts}.

We now simplify \eqref{eqestimateanalytic} by exploiting that $tH(t)$ is non-decreasing and by additionally requiring that $\hat{\phi}_n$ is supported in $(-1/2,1/2)$; this comes at the cost of doubling the value of $A$, see Lemma \ref{lemanalyticcuts}. Repeating the analysis of $\mathcal{T}_{\mathrm{SA}}$, one realizes that restricting the support of $\phi$ has the advantage one only needs to bound $\| G \chi_{(-\lambda/2,\lambda/2)}\|_{L^{1}}$ and this results that one may replace $G(\lambda + G(0)\inv)$ with $G(\lambda/2 + G(0)\inv)$ and $\int^{\lambda}_{0} \tilde{H}$ with $\int^{\lambda/2}_{0} \tilde{H}$ in \eqref{eqestimateanalytic}.

If $\lambda \geq 2G(0)\inv$, one has $G(\lambda/2 + G(0)\inv) \leq G(\lambda)$ since $G$ is non-decreasing. Furthermore, since $tH(t)$ is non-decreasing, one finds that $\tilde{H}(t) \ll H(2t)$ for $t \geq 3G(0)\inv$, say. This leads to $\int^{\lambda/2}_{0} \tilde{H}(t) \mathrm{d}t \ll \int^{\lambda}_{0} H(t)\mathrm{d}t + G(0)\inv\max_{t \leq 5G(0)\inv} H(t) \ll_{G(0),H} \int^{\lambda}_{0} H(t)\mathrm{d}t$.

Summarizing, for each $x \geq X$ and $\lambda \geq \max\{1,2G(0)\inv\}$, there exists $\phi_n$ (depending on $x$ and $\lambda$) satisfying the properties of Lemma \ref{lemtaubarg2} such that
\begin{equation} \label{eqestimateanalytic2}
 \left|\left\langle g(t), e^{ixt} \hat{\phi}_{n}(-t/\lambda)\right\rangle\right|  \ll_{C,G(0),H}  \exp\left(-\frac{x}{G(\lambda)(1+ 2AeG(\lambda)\inv\lambda^{-1})}\right) \int^{\lambda}_{0} H(t) \mathrm{d}t,
\end{equation}
with $C$ and $A$ as in Lemma \ref{lemanalyticcuts}. This concludes the proof of Theorem \ref{thrikimain} for all $\mathcal{T}_\ast$.
  
\pagebreak[2]
\begin{remark} \ 
\begin{enumerate} 
\item  In practice the restriction $\lambda \geq 2G(0)\inv$ for $E_{\mathrm{An}}(x,\lambda)$ is insignificant as the locations of $\lambda$ where the infimum in \eqref{eqrikires} is close to be attained, will usually tend to $\infty$ as $x \rightarrow \infty$.
\item A few technical optimizations are possible when either $G$ or $H$ do not satisfy the non-decreasing assumptions imposed in the definition of $\mathcal{T}_{\mathrm{An}}$. In case $tH(t)$ is not non-decreasing, one does still obtain \eqref{eqestimateanalytic}, and it may be advantageous to define $\tilde{H}$ more accurately as $\tilde{H}(t) = \sup_{|t - u| \leq \sqrt{2} d} H(u)$, where $d$ is the distance of $t$ to the sides of the square $\Gamma$. Another possible optimization is to select $\Gamma$ as the largest square inside the region of analytic continuation of $g$ having center $t$; this does not necessarily require that $G$ is non-decreasing or continuous. These mostly technical improvements usually only affect the implicit constant in \eqref{eqrikires} and we decided not to incorporate them in our main Theorem \ref{thrikimain}.
\item In many situations the factor $(1 + 2eAG(\lambda)\inv\lambda\inv)$ in the exponential of $E_{\mathrm{An}}$ is irrelevant, see e.g. the derivation of Theorem \ref{coran} later.
\item Sometimes one has also non-trivial information on some derivative of the analytic continuation of the Laplace transform or one can only establish estimates for some primitive of the Laplace transform. If $g^{(n_0)} \in \mathcal{T}_{\mathrm{An}}$ for some $n_0 \in \mathbb{Z}$, one achieves \eqref{eqrealestimate} where every $n$ on the right-hand-side is replaced with $n - n_0$. This means the optimal choice for $n$ is shifted by $n_0$. Ultimately one obtains again \eqref{eqestimateanalytic}, but now the right-hand side of these expressions has an extra factor $x^{-n_0}$. 
\end{enumerate}
\end{remark}

We now show Theorem \ref{coran}. It rephrases Theorem \ref{thrikimain} for $\mathcal{T}_{\mathrm{An}}$ with the classical Tauberian condition of $S(x) + Cx$ being non-decreasing. This allows us to compare it with \cite[Th. 2.1]{stahn} more easily. We recall the definitions of positive increase and $\eta$-regular growth.\par

A function $K: \R_+ \rightarrow \R_+$ is said to be of \emph{positive increase} if there are constants $a > 0$, $t_0$ such that for all $R \geq t_0$,
\begin{equation} \label{eqqikpi}
 t^{-a}K(t) \ll R^{-a}K(R), \ \ \ t_0 \leq t \leq R.
\end{equation}
A function of positive increase must therefore grow at least faster than $t^a$ for some $a > 0$ as $t \rightarrow \infty$.

Let $\eta: \R_{+} \rightarrow (0,\infty)$ be a positive non-decreasing function. A positive function $V: \R_+ \rightarrow \R_+$ is of \emph{$\eta$-regular growth} if there exist $C', t_{0} > 0$ such that
\begin{equation} \label{eqregmk}
 \frac{V(C't)}{V(t)} \geq 1 + \frac{1}{\eta(t)}, \ \ \ t \geq t_{0}.
\end{equation}

\begin{proof}[Proof of Theorem \ref{coran}] Let $t_S$ be a number such that $|\mathcal{L}\{S;s\}| \leq K(|s|)/s$ as soon as $|s| \geq t_S$ and $s\in \Omega_M$. The function $S$ satisfies the Tauberian condition $\mathfrak{T}(0,Cx,C,0)$ and the boundary distribution of the Laplace transform belongs to $\mathcal{T}_{\mathrm{An}}(M,H)$, where 
$ H(t) = K(t)/t$ when $t\geq t_S$ and $H(t) = \max_{|s| \leq t_S, s \in \Omega_M} |\mathcal{L}\{S;s\}|$ otherwise. Theorem \ref{thrikimain} thus yields that
\begin{equation} \label{qikeqresultsimple}
 |S(x)| \ll_{S,M,K} \inf_{\lambda \geq 1}\left\{\frac{1}{\lambda} + \exp\left(-\frac{x}{M(\lambda)(1+\eta(\lambda)^{-1})}\right)\int^{\lambda}_1 \frac{K(t)}{t} \dif t\right\}, 
\end{equation}
where $\eta(\lambda) = \lambda M(\lambda)/675$.

The first assertion \eqref{equsefulresult} now follows by selecting $\lambda = M_{K,\log}\inv(cx)$ in \eqref{eqrikires}. We estimate $\int^{\lambda}_{1} K(t)t^{-1}\mathrm{d}t \ll_{K} K(\lambda) \log \lambda $  and $1+\eta(\lambda)\inv \leq c^{-1}$ which is allowed since $\lambda \rightarrow \infty$ as $x \rightarrow \infty$.\par

If $M_{K,\log}$ has $\eta$-regular growth, we pick $\lambda$ to be a solution of the equation $x = (1+\eta(\lambda)\inv )M_{K,\log}(\lambda)$ in \eqref{qikeqresultsimple}. Then $S(x) \ll_{S,M,K} \lambda\inv \ll_{M,K} M_{K,\log}\inv(x)\inv$. The latter estimate is, due to the increasing nature of $M_{K,\log}$, equivalent to the existence of $C'$, such that $x \leq M_{K,\log}(C'\lambda)$ for sufficiently large $x$ and this follows from \eqref{eqregmk}.\par
If furthermore $K$ is of positive increase, we have $\int^{\lambda}_{1} K(t)t^{-1}\mathrm{d}t \ll_{K} K(\lambda)$ and \eqref{eqmkest} follows by selecting $\lambda$ to be a solution of the equation $x = (1+\eta/\lambda)M_{K}(\lambda)$ in \eqref{qikeqresultsimple} upon noticing that the $\lambda M(\lambda)/675$-regular growth of $M_{K,\log}$ is equivalent to that of $M_{K}$. 
\par
Alternatively, if $M(t)/\log^{\beta} t$ is non-decreasing for $t \geq t_{1} \geq 1$, say, we do not immediately apply Theorem \ref{thrikimain} with $g \in \mathcal{T}_{\mathrm{An}}$, but we slightly improve the available bounds for the derivatives of the Fourier transform and then appeal to $\mathcal{T}_{\mathrm{SAA}}$, see Remark \ref{qikremnotherform}.

We start from \eqref{eqrealestimate}, that is $|g^{(n)}(t)| \ll_{S} n^n e^{-n} M(t + M(0)\inv)^n  K(t+2M(0)\inv)/(t-2M(0)\inv) =: n^n e^{-n}G_n(t)$ for all natural $n$ and sufficiently large $t$, say $t \geq t_S + 3M(0)\inv$. If $t \leq t_S + 3M(0)\inv$ we have $|g^{(n)}(t)| \ll_{S,M,K} n^n e^{-n} M(t + M(0)\inv)^n =: n^n e^{-n} G_n(t)$. We can now improve \eqref{eqaux}; we only integrate on $(-\lambda/2,\lambda/2)$ because we chose the support of the sequence $\phi_n$ to belong to $[-1/2,1/2]$. We find for $j \leq n$ and sufficiently large $\lambda$ that
\begin{align*}
 \int_{-\lambda/2}^{\lambda/2}G_j(t) \mathrm{d}t & \ll_{M,K,t_1}  M(\lambda)^{j}K(\lambda) + \int^{\lambda}_{t_{1}} \frac{M(t)^{j}K(t)}{t}\mathrm{d}t \\
 & \ll M(\lambda)^{j}K(\lambda) + \frac{M(\lambda)^{j}K(\lambda)}{\log^{\beta j} \lambda} \int^{\lambda}_{1}  \frac{\log^{\beta j} t }{t} \mathrm{d}t\\
& = M(\lambda)^{j}K(\lambda) \left(1 + \frac{\log \lambda}{\beta j+1}\right) \\
& \ll M(\lambda)^{j}2^{n-j}K(\lambda) \left(1 + \frac{\log \lambda}{\beta n+1}\right).
\end{align*}
In the last transition we used that $2^{n-j} \geq (n+1)/(j+1) \geq (\beta n + 1)/(\beta j +1)$ as we may assume without loss of generality that $0 < \beta \leq 1$. 

In conclusion, via \eqref{eqqiksaaerror} we obtain for sufficiently large $x$ that
$$ S(x) \ll_{S,M,K} \inf_{\lambda \geq 1}\left\{\frac{1}{\lambda} + \inf_{n \in \N} \frac {n^n M(\lambda)^{n} \left(1 + 2\eta(\lambda)^{\inv}\right)^n }{e^n x^n }\left(1 + \frac{\log \lambda}{\beta n+1}\right) K(\lambda)\right\}.
$$
We thus have to certify that the term $\log \lambda/(\beta n + 1)$ does not drastically alter the infimum after optimizing $n$, what amounts to $\log \lambda \ll_{M,K} n$. As we select $n = x/(M(\lambda)(1+2\eta(\lambda)\inv)) + O(1)$, it is clear that $\log \lambda \ll_{M,K} n$ since $\lambda$ is chosen to be a solution of  $x = (1+2\eta(\lambda)\inv) M_{K}(\lambda)$. Since $\lambda \rightarrow \infty$ as $x \rightarrow \infty$, this ultimately leads to \eqref{eqmkest} after taking into account the $\eta/2$-regular growth of $M_K$ (which is equivalent to that of $M_{K,\log}$).

The impossibility of improving \eqref{equsefulresult} beyond the $M_K$-estimate will be shown in Section \ref{qiksecopti}.
\end{proof}

We end the discussion of $\mathcal{T}_{\mathrm{An}}$ with a few remarks.

\begin{remark} \label{remrfran} \:
\begin{enumerate}
\item We recall our convention in this work that the analytic continuation of $\mathcal{L}\{S;s\}$ to $\Omega_{M}$ indicates that it is analytic on the interior of $\Omega_M$ with a continuous extension to the boundary of $\Omega_{M}$. If $M$ is strictly increasing, it actually suffices for Theorems \ref{coran} and \ref{thrikimain} to have the analytic continuation (with according bounds) of $\mathcal{L}\{S;s\}$ only in the interior of $\Omega_M$.

\item In comparison with Stahn's Theorem \ref{thqikstahn}, we require the extra hypothesis that $M_{K,\log}$ is of regular growth to achieve the $M_{K,\log}\inv(x)$ and $M_{K}\inv(x)$-decay rates. However, this additional restriction is rather weak and is in practice always fulfilled. In fact, if $K(t) \ll \exp(CtM(t))$ for a certain $C > 0$, then $M_{K,\log}$ is automatically of $C'tM(t)$-regular growth for any $C' > 0$. Furthermore, even if $K$ does not admit this exponential bound, the function $K$ should grow very irregularly, by having sudden growth spurts followed by long periods of staying quasi-constant (where $M$ would have to stay quasi-constant as well), to force $M_{K,\log}$ not to be of $tM(t)$-regular growth. 
\item On the other hand Stahn required a two-sided Tauberian condition and the additional hypothesis \eqref{eqqikcondstahn}. In his paper \cite{stahn} he raised the question whether \eqref{eqqikcondstahn} was optimal in a certain sense. Theorem \ref{coran} settles this question; we do not need to impose any growth constraints between $M$ and $K$.
\item \label{qikremmkopt} Apart from omitting \eqref{eqqikcondstahn} in Theorem \ref{coran}, we also establish the error term $M_{K}^{-1}(x)^{-1}$ for a bigger class of functions $M$ and $K$ than Stahn; we allow $M(t)$ to have logarithmic growth. Even in the important case when $M = K$, this generates sometimes improved rates, such as for $M(t) = K(t) = \log(t+e)$. 

In general, whether the $M_{K,\log}$-estimate in Theorem \ref{coran} can be improved towards the $M_{K}$-estimate largely hinges on the estimation of
\begin{equation} \label{eqqikcrucint}
 \int^{\lambda}_{1} \frac{M(t)^{n} K(t)}{t} \mathrm{d}t. 
\end{equation}
The $M_{K}$-error term requires the estimate $M(\lambda)^{n}K(\lambda)$ for this integral, which is unclear if both $M$ and $K$ grow relatively slowly. For those $M$ and $K$ where the estimate $M(\lambda)^{n}K(\lambda)$ cannot be reached, one may still be able to get an improvement over the $M_{K,\log}$-error term (and thus Theorem \ref{coran}) if the trivial estimation $M(\lambda)^{n}K(\lambda) \log \lambda$ for the integral can be refined. Since the estimate $M(\lambda)^{n}K(\lambda)$ clearly cannot be reached for \eqref{eqqikcrucint} for all non-decreasing functions $M$ and $K$, it appears unlikely to us that the $M_{K}$-estimate in Theorem \ref{coran} should hold without imposing any additional conditions on $M$ and/or $K$. 
\item In his paper \cite{stahn} Stahn also considers the Tauberian condition when the $m$'th derivative of $S$ exists and is bounded. We can obtain a result similar to Theorem \ref{coran} for this Tauberian condition by combining the analysis of this section with Lemma \ref{lemtaubargm} applied to $F_m(x) = Cx^m/m!$; note that one can adapt the argument in section \ref{sqikexact} to ensure that $\int^{\infty}_{-\infty} S(x+y) \lambda \phi(\lambda y) \dif y = (2\pi)\inv \left\langle g(t), e^{ixt} \hat{\phi}(-t/\lambda)\right\rangle$ holds under this Tauberian condition.

The rate in \eqref{equsefulresult} then becomes $1/(M_{K,m,\log}^{-1}(cx))^m$ for all $c < 1$ where $M_{K,m,\log}\inv$ is the inverse function of $M_{K,m,\log}(t) = M(t) (\log K(t) + m \log t + \log \log t)$. Again $c$ can be picked equal to $1$ if $M_{K,m,\log}$ is of $tM(t)/(675 m)$-regular growth. The rate in \eqref{eqmkest} becomes $1/(M_{K,m}^{-1}(x))^m$ where $M_{K,m}(t) = M(t)(\log K(t) + m \log t)$ (where the assumption on the regular growth is now on $M_{K,m}$ and $675$ in the regular growth is replaced with $1350$ in case $M(t)/\log^{\beta}t$ is eventually non-decreasing). The implicit constants for all these estimates obviously now also depend on $m$. Naturally these conclusions also hold under the one-sided Tauberian condition of $S^{(m)}$ being bounded from below.

\item There do exist functions $S$ with bounded derivative whose Laplace transform admits an analytic continuation beyond $\Re s = 0$ that does not satisfy Stahn's restriction \eqref{eqqikcondstahn}. Concretely, in \cite{b-d-v-abswi}, see also \cite{d-v-abswi}, one constructs for an arbitrarily slowly decaying function $\varepsilon(x) \rightarrow 0$, a function $S$ with bounded derivative whose Laplace transform admits an entire analytic extension that satisfies the oscillation estimate $S(x) = \Omega(\varepsilon(x))$. Setting $M(t) \equiv 1$ and $\varepsilon(x)$ a function that is decaying slower to $0$ than $1/\sqrt{\log x}$ say, the function constructed in \cite{b-d-v-abswi} is clearly analytic in $\Omega_M$, but any acceptable growth bound $K$ for this function cannot satisfy \eqref{eqqikcondstahn} as Stahn's theorem would otherwise yield that $S(x) \ll_{S} 1/\log x$, a contradiction. 
\end{enumerate}
\end{remark}

\section{Optimality of Theorem \ref{thrikimain}} \label{qiksecopti}

The optimality of quantified versions of the Ingham-Karamata theorem under analytic hypotheses for the Laplace transform has already been intensively investigated in the literature. There are, to the knowledge of the author, two main approaches known to obtain the optimality for theorems related to Theorem \ref{coran}. The first one originated in \cite{b-t} and relied on a very delicate construction of a certain complex measure to show that the $M_{K}$-estimate is best possible when $M$ an $K$ are polynomial and given some restrictions on their degrees. This approach has been generalized in \cite{b-b-t} and then again significantly by Stahn \cite{stahn} who showed that the $M_{K}$-estimate is optimal for a reasonably large class of functions $M$ and $K$.\par
The second approach employs an attractive functional analysis technique based on the open mapping theorem. In the context of Theorem \ref{coran}, this technique was first exploited in \cite{DebruyneSeifert1} and was later improved in \cite{DebruyneSeifert2}. Actually, the latter paper contains, with respect to Theorem \ref{coran}, the most general optimality results available in the literature so far \cite[Th. 2.3]{DebruyneSeifert2}. It says that, if $M_{K}$ has at most exponential growth, then one cannot improve Theorem \ref{coran} beyond the $M_{K}$-estimate. 

Although the first constructive approach does not provide the optimality result under the most general constraints for $M$ and $K$, it has the advantage that it delivers an explicit example exhibiting the oscillation estimate $\Omega(1/M_{K}^{-1}(x))$ while satisfying the hypotheses of the Tauberian theorem. This feature is lacking in the functional analysis framework; there one shows there cannot exist a positive real-valued function $\varepsilon(x) \rightarrow 0$ such that the error $\varepsilon(x)/M_{K}^{-1}(x)$ would be acceptable for Theorem \ref{coran}. \par
We also mention that, in some particular cases for $M$ and $K$, it is possible to construct simple extremal oscillatory examples determining the optimality of Theorem \ref{coran}, see \cite{b-d-v-osc} when $M$ and $K$ are certain polynomials. Compared to the other two approaches, this method has the advantage that the examples are easy to describe and even witness the $\Omega(1/M_{K}^{-1}(x))$-estimate on the whole positive half-axis $(0,\infty)$ rather than only on a subsequence $x_k \rightarrow \infty$, but it has the downside of being rigid and it appears difficult to generalize.

In this Section we investigate whether the error terms of Theorem \ref{thrikimain} are \emph{best possible}. We shall use the functional analysis framework which appears to be the most flexible and delivers the most general results. Our main goal here is to develop the technology to incorporate a more general Tauberian condition and a wider variety of hypotheses on the Laplace transform. Yet, we also optimize the existing technique of \cite{DebruyneSeifert2}, see Corollary \ref{coroptian} below; this drastically weakens the constraints on $M$ and $K$ for when one is able to show the optimality of the $M_K$-estimate in Theorem \ref{coran}.

As we mentioned in the Introduction, the open mapping theorem dualizes the optimality question and involves the selection of test functions. Similar to the proof of the Tauberian Theorem \ref{thrikimain}, it is technically less complicated to construct test functions for boundary assumptions of the Laplace transforms of non-quasianalytic nature than for those of quasianalytic nature. In the non-quasianalytic setting, we may appeal to the Denjoy-Carleman theorem, while for hypotheses of more general subanalytic nature, we shall use the sequence of functions that was constructed in \cite[Lemma 2.1]{DebruyneSeifert2}.

We show our optimality result under the two-sided Tauberian condition $|S'(x)| \leq f(x)$, where $f$ satisfies certain mild regularity assumptions, roughly amounting to $f(x) \ll \exp(x^{\alpha})$ for some $\alpha < 1$. As a consequence, if $F$ possesses the relevant regularity, we actually show the optimality of a more restrictive Tauberian theorem than Theorem \ref{thrikimain}; the one-sided Tauberian condition of $S(x) + F(x)$ being non-decreasing from Theorem \ref{thrikimain} is weaker than $|S'(x)| \leq F'(x) =: f(x)$. Thus, we obtain a stronger optimality result. We also stress that we work with Laplace transforms and thus that the functions should have support on $[0,\infty)$; this necessitates a much more delicate argument, see section \ref{qiksecnegaxis}, than if no restrictions were put on the support.

\subsection{Statement of the optimality theorem} \label{secqikstate}

Before we state our optimality result, we first restrict the classes $\mathcal{T}_{\ast}$ slightly and we denote these new classes as $\widetilde{\mathcal{T}}_{\ast}$. Apart from $\widetilde{\mathcal{T}}_{\mathrm{SA}}$ and $\widetilde{\mathcal{T}}_{\mathrm{SAI}}$ which require a somewhat different, but still highly related, definition, the restrictions of the new classes are mostly for technical convenience only. In section \ref{qikremopti} we discuss in detail how the optimality Theorem \ref{thrfrsop} with $\widetilde{\mathcal{T}}_{\ast}$ relates to the Tauberian Theorem \ref{thrikimain} with $\mathcal{T}_{\ast}$. The classes $\widetilde{\mathcal{T}}_{\ast}$ consist of the same functions as $\mathcal{T}_{\ast}$, but with the following extra requirements.

\begin{itemize}
\item The sequence $M_n/n!$ is logarithmically convex.
\item For $\widetilde{\mathcal{T}}_{\mathrm{HC}}$ and $\widetilde{\mathcal{T}}_{\mathrm{HCI}}$, one has $\delta_0 < 1/3$ and the function $\omega$ is \emph{subadditive}, that is, $\omega(x +y) \leq \omega(x) + \omega(y)$ for all $x,y > 0$.
\item For the classes $\widetilde{\mathcal{T}}_{\mathrm{Dif}}$, $\widetilde{\mathcal{T}}_{\mathrm{DifI}}$, $\widetilde{\mathcal{T}}_{\mathrm{HC}}$ and $\widetilde{\mathcal{T}}_{\mathrm{HCI}}$, we no longer suppose that $|g^{(j)}(0)| \leq D$, for a given $D$, but only that some $D$ exists.
\item The new classes $\widetilde{\mathcal{T}}_{\mathrm{SANQ}}$ and $\widetilde{\mathcal{T}}_{\mathrm{SAINQ}}$ consist, except for the differences mentioned below, of the same elements as $\widetilde{\mathcal{T}}_{\mathrm{SA}}$ and $\widetilde{\mathcal{T}}_{\mathrm{SAI}}$ respectively, but now the sequence $M_n$ is non-quasianalytic.
\item For $\widetilde{\mathcal{T}}_{\mathrm{Dif}}$ and $\widetilde{\mathcal{T}}_{\mathrm{HC}}$, the function $G$ is even\footnote{This is to ensure that the definition of $\widetilde{\mathcal{T}}_{\ast}$ is symmetric with respect to the origin. Note that this is no restriction for our optimality theorem since for any real-valued function, the Laplace transform is always symmetric with respect to the real axis.}. The functions $G_n$ are even for $\widetilde{\mathcal{T}}_{\mathrm{SA}}$ and $\widetilde{\mathcal{T}}_{\mathrm{SANQ}}$.
\item Instead of \eqref{eqaux}, the functions $G_n$ in the definition of the class $\widetilde{\mathcal{T}}_{\mathrm{SANQ}}$ satisfy
\begin{equation} \label{eqrfrscogn}
  \left\|\frac{\chi_{(\lambda,2\lambda)}(t)}{G_{n}(t)} \right\|_{L^p} \ll \frac{\lambda}{G(\lambda)^{n}H(\lambda)}, \quad \lambda \rightarrow \infty.
\end{equation}
Recall that $\chi_{E}$ denotes the indicator function of a set $E$. Here $G \geq 1$. 
The sequence $G_n$ is incorporated in the class $\widetilde{\mathcal{T}}_{\mathrm{SANQ}} = \widetilde{\mathcal{T}}_{\mathrm{SANQ}}(M_n,B,G_n,G,H,p)$.
\item Instead of \eqref{eqaux}, the functions $G_n$ in the definition of the class $\widetilde{\mathcal{T}}_{\mathrm{SA}}$ satisfy, as $\lambda \rightarrow \infty$,
\begin{equation} \label{eqqiknewcondsa}
 \left\|\frac{\chi_{\{u:|u| \geq \lambda\}}(t)}{ t^2 G_{n}(t)} \right\|_{L^p} \ll \frac{1}{ G(\lambda)^{n}H(\lambda)}, \ \ \ \left\|\frac{\chi_{(-\lambda,\lambda)}(t) }{(1+ |t|) G_{n}(t)} \right\|_{L^{p}} \ll \tilde{B}^{n} \lambda \frac{M_n}{n!},
 \end{equation}
 \begin{equation*}
 \left\|\frac{\chi_{\{u:|u| \geq \lambda\}}(t)}{(1+|t|/\lambda)^4 G_{n}(t)} \right\|_{L^p} \ll \frac{\lambda}{G(\lambda)^n H(\lambda)},
\end{equation*}
for some $\tilde{B} > 0$. We take $\tilde{B}$ and the sequence $G_n$ into account as parameters for the class $\widetilde{\mathcal{T}}_{\mathrm{SA}} = \widetilde{\mathcal{T}}_{\mathrm{SA}}(M_n, B,\tilde{B}, G_n, G, H, p)$.
\item The order of the derivative $N$ in $\widetilde{\mathcal{T}}_{\mathrm{Dif}}$ and $\widetilde{\mathcal{T}}_{\mathrm{DifI}}$ should be different from $0$. Furthermore the estimates \eqref{equltradif} and \eqref{eqqiksai} on the derivatives in the classes $\widetilde{\mathcal{T}}_{\mathrm{SA}}$, $\widetilde{\mathcal{T}}_{\mathrm{SANQ}}$, $\widetilde{\mathcal{T}}_{\mathrm{SAI}}$ and $\widetilde{\mathcal{T}}_{\mathrm{SAINQ}}$ need only to hold from $n \geq 1$ onward instead of for $n \geq 0$.
\item For the classes $\widetilde{\mathcal{T}}_{\mathrm{Dif}}$ and $\widetilde{\mathcal{T}}_{\mathrm{HC}}$ one has $\left\| (1+|t|)^{-2}/G(t)\right\|_{L^{p}} \ll 1$, while for the classes $\widetilde{\mathcal{T}}_{\mathrm{SA}}$ and $\widetilde{\mathcal{T}}_{\mathrm{SANQ}}$ there holds $\left\| (1+|t|)^{-2}/G_n(t)\right\|_{L^{p}} \ll 1$.
\item  For $\widetilde{\mathcal{T}}_{\mathrm{SA}}$ and $\widetilde{\mathcal{T}}_{\mathrm{SANQ}}$ we assume now also that $G$ is non-decreasing and that $1/H$ is locally bounded. For $\widetilde{\mathcal{T}}_{\mathrm{SANQ}}$ we suppose additionally that $H(t)$ is bounded from below near $t= 0$. For $\widetilde{\mathcal{T}}_{\mathrm{SAINQ}}$ and $\widetilde{\mathcal{T}}_{\mathrm{SAI}}$ we suppose that $G$ is bounded from below, say $G \geq 1$.
\end{itemize}

After these introductory comments we come to the main theorem of this Section.

\begin{theorem} \label{thrfrsop} Let $f: \mathbb{R}_{+} \rightarrow \mathbb{R}_{+}$ be a differentiable non-decreasing function such that $|f'(x)/f(x)| \ll x^{-\varepsilon}$ for some $\varepsilon > 0$, $\log f(x) \ll x^{1-\varepsilon}$, $f \gg 1$ as $x \rightarrow \infty$. 
If $W: \mathbb{R}_{+} \rightarrow \mathbb{R}_{+}$ is such that each real-valued continuously differentiable $S: (0,\infty) \rightarrow \mathbb{R}$ with $|S'(x)| \leq f(x)$ and infinitely differentiable Fourier transform $\hat{S} \in \widetilde{\mathcal{T}}_{\ast}$, obeys the bound $|S(x)| \ll_{S,W} 1/W(x)$, then
\begin{equation} \label{eqresultopti}
 W(x) \ll_{f,\widetilde{\mathcal{T}}_{\ast},W} \inf_{\lambda \geq 1} \left(\frac{\lambda}{f(x)} + \widetilde{E}_{\ast}(x,\lambda)\right), \ \ \  x \rightarrow \infty,
\end{equation}
where $\widetilde{E}_{\ast}(x,\lambda)$ is given below.
\end{theorem}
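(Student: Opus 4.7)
The plan is to dualize the hypothesis via the uniform boundedness principle and then construct, for each pair $(x,\lambda)$, an extremal test function witnessing the right-hand side of \eqref{eqresultopti}. First, I would organize the admissible class as a Banach space. Let $X$ consist of continuously differentiable real-valued $S$ supported on $[0,\infty)$ with $|S'(x)| \leq f(x)$ everywhere and whose Fourier transform belongs to $\widetilde{\mathcal{T}}_\ast$. Equip $X$ with
\[
\|S\|_X \;:=\; \sup_{x \ge 0} \frac{|S'(x)|}{f(x)} \;+\; \|\hat S\|_{\widetilde{\mathcal{T}}_\ast},
\]
where $\|\cdot\|_{\widetilde{\mathcal{T}}_\ast}$ is the natural (quasi-)norm recording the constants implicit in the defining estimates of the class. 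Completeness is standard, and each evaluation $T_x : S \mapsto W(x)S(x)$ is continuous on $X$ since $|S(x)| \le xf(x)\|S\|_X$ and $W$ is locally bounded. The hypothesis asserts that the orbit $\{T_x(S)\}_{x \ge 1}$ is bounded for every $S \in X$, so the Banach--Steinhaus theorem furnishes an absolute constant $C>0$ with $W(x)|S(x)| \ll \|S\|_X$ for all $x \geq 1$ and $S \in X$.

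Next, for each $x$ and $\lambda \geq 1$, I would produce explicit $S_{x,\lambda} \in X$ with $\|S_{x,\lambda}\|_X \ll 1$ and $|S_{x,\lambda}(x)| \gg 1/(\lambda/f(x) + \widetilde{E}_\ast(x,\lambda))$; the uniform bound above then forces the claimed inequality for $W(x)$. Two separate families of test functions deliver the two terms in the infimum: one family realizes the ratio $f(x)/\lambda$ (carrying the Tauberian cost of enforcing $|S'|\leq f$), and a second family realizes $1/\widetilde{E}_\ast(x,\lambda)$ (carrying the Fourier-side cost of membership in $\widetilde{\mathcal{T}}_\ast$). The second family is assembled in the frequency variable from the dual sequence $h_k$ of \cite{DebruyneSeifert2}: functions approximating $\chi_{\{|t|\geq 1\}}$, uniformly bounded, analytic in shrinking strips around $\R$, and vanishing to order $k$ at the origin. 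Dilating to $h_k(t/\lambda)$ and multiplying by a phase $e^{-ixt}$ concentrates the inverse Fourier transform near $x$; the zero of high order at $0$ supplies the decay of $\hat S$ and its derivatives required by $\widetilde{\mathcal{T}}_\ast$, while the analyticity strip controls the spatial decay of $S$ away from $x$. For the non-analytic classes $\widetilde{\mathcal{T}}_{\mathrm{Dif}}, \widetilde{\mathcal{T}}_{\mathrm{HC}}, \widetilde{\mathcal{T}}_{\mathrm{SANQ}}, \widetilde{\mathcal{T}}_{\mathrm{SAINQ}}$, and similar, the Denjoy--Carleman theorem supplies a single compactly supported analogue of $h_k$ and the construction simplifies considerably.

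The principal obstacle is reconciling $\operatorname{supp} S \subseteq [0,\infty)$ with the Fourier-side hypothesis $\hat S \in \widetilde{\mathcal{T}}_\ast$: the candidates above are only concentrated on $[0,\infty)$ via the spatial decay of $h_k^{\vee}$, not supported there, so one must excise the negative half-line tail without destroying the $\widetilde{\mathcal{T}}_\ast$-norm estimates of $\hat S$. This surgery is what the later section \ref{qiksecnegaxis} is devoted to, and it is the main technical bottleneck. A secondary difficulty is that optimality must be established across the full parameter range, including the regime where the ultradifferentiability class approaches being quasianalytic; this is why a sequence $h_k$, rather than a single $h$, is required, paralleling the sequence $\phi_n$ used in the direct Tauberian direction. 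Accordingly, the technical restrictions embedded in the definition of $\widetilde{\mathcal{T}}_\ast$ in section \ref{secqikstate} are calibrated to make the extremal construction feasible, and I expect the argument to split along the various subclasses with the analytic case $\widetilde{\mathcal{T}}_{\mathrm{SA}}$ being the most intricate.
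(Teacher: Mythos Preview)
Your plan has the right overall architecture (duality via functional analysis, the sequence $h_k$ of \cite{DebruyneSeifert2}, a Denjoy--Carleman test function for the non-quasianalytic classes, and the negative-half-axis surgery as the main technical step), but there are two genuine problems.

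\textbf{The two-family decomposition is wrong.} You write that ``one family realizes the ratio $f(x)/\lambda$\ldots and a second family realizes $1/\widetilde{E}_\ast(x,\lambda)$''. This cannot work: a family that forces $|S(x)|\gg f(x)/\lambda$ while keeping $\|S\|_X\ll 1$ would in particular require $\|\hat S\|_{\widetilde{\mathcal{T}}_\ast}\ll 1$, and there is no reason this holds independently of $\lambda$ (indeed for small $\lambda$ it fails). The paper uses a \emph{single} family $S_{x,\lambda}(y)=h(\lambda(x-y))\chi_{(0,\infty)}(y)$ (respectively $h_k$ in the subanalytic case) with $S_{x,\lambda}(x)\geq 1$, and shows that the Tauberian part of its norm is $\ll\lambda/f(x)$ while the $\widetilde{\mathcal{T}}_\ast$-part is $\ll\widetilde{E}_\ast(x,\lambda)$; the sum in \eqref{eqresultopti} is the \emph{total} norm of one function, not two separate witnesses. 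The tradeoff between the two terms is intrinsic: concentrating $S$ near $x$ (large $\lambda$) makes $|S'|$ large but improves the Fourier-side estimates, and vice versa.

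\textbf{Banach--Steinhaus on your space $X$ does not apply as stated.} Your claimed continuity of $T_x$ via $|S(x)|\le x f(x)\|S\|_X$ presumes $S(0^+)=0$, which is not part of the hypotheses; the test functions in the paper have a jump at $0$. More seriously, for several classes (e.g.\ $\widetilde{\mathcal{T}}_{\mathrm{Dif}}$) the norm $\|\hat S\|_{\widetilde{\mathcal{T}}_\ast}$ involves only $\hat S^{(N)}$ and does not control lower-order data, so $X$ is not complete and point evaluation is not continuous. The paper avoids this by working with Fr\'echet spaces $V_1\subset V_2$ carrying the additional seminorms $\sup_{|t|\le\nu,\,j\le m}|\hat S^{(j)}(t)|$, and invoking the \emph{open mapping theorem} rather than Banach--Steinhaus: since the hypothesis says $V_1=V_2$ as sets and the inclusion is continuous, open mapping gives \eqref{eqqikopenmap} for some $\nu_0,m_0$; one then has to bound these extra seminorms for the test functions, which costs only $\ll 1/f(x)$. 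This is not merely cosmetic: the extra seminorms are what make the duality step go through.
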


The Fourier transform is interpreted here initially as the boundary (ultra)distribution of the Laplace transform of $S$, that is $\left\langle\mathcal{L}\{S;\sigma + it\}, \phi(t)\right\rangle \rightarrow \left\langle \hat{S}(t), \phi(t)\right\rangle$ as $\sigma \rightarrow 0^+$ for all compactly supported functions $\phi$ whose Fourier transform $\hat{\phi}(x)$ obeys $\hat{\phi}(x) \ll_{\phi,\varepsilon} \exp(-|x|^{1-\varepsilon/2})$, say, with $\varepsilon$ given by Theorem \ref{thrfrsop}. Yet, as the boundary (ultra)distribution corresponds to a continuous function, a standard application (of an ultradistributional version) of the edge-of-the-wedge theorem involving the Poisson transform, delivers that the Fourier transform is in fact a continuous extension of the Laplace transform to the imaginary axis.

Evidently, the above theorem implies that for each positive real-valued function $\varepsilon: \R_+ \rightarrow (0,\infty)$ tending to $0$ as $x \rightarrow \infty$, there exists a function $S = S(\varepsilon, f, \widetilde{\mathcal{T}}_{\ast}) : (0,\infty) \rightarrow \mathbb{R}$ with derivative $|S'(x)| \leq f(x)$ and whose Fourier transform, interpreted appropriately, belongs to $\widetilde{\mathcal{T}}_{\ast}$, but for which 
$$ S(x) \gg_{\varepsilon,f,\widetilde{\mathcal{T}}_{\ast}} \varepsilon(x) \inf_{\lambda \geq 1} \left(\frac{\lambda}{f(x)} + \widetilde{E}_{\ast}(x,\lambda)\right)^{-1}, \ \ \ \text{infinitely often as } x \rightarrow \infty.
$$
In section \ref{qikremopti}, we shall sometimes refer to such functions $S$ as \emph{counterexamples}.

The functions $\widetilde{E}_{\ast}(x,\lambda)$ are given by
\begin{itemize}
\item $\widetilde{E}_{\mathrm{Dif}}(N,G, p,x,\lambda) = x^{N}\lambda^{-1} \left\| \chi_{(\lambda, 2\lambda)}/G \right\|_{L^{p}}$,
\item $\widetilde{E}_{\mathrm{DifI}}(N, G, x,\lambda) = x^{N}/G(\lambda)$,
\item $\widetilde{E}_{\mathrm{HC}}(N, G, p, \omega, \delta_0, x,\lambda) = x^{N}\lambda^{-1} \left\| \chi_{(\lambda, 2\lambda)}/G \right\|_{L^{p}}/ \omega(1/x)$,
\item $\widetilde{E}_{\mathrm{HCI}}(N, G, \omega, \delta_0, x,\lambda) = x^{N}/(\omega(1/x)G(\lambda))$,
\item $\widetilde{E}_{\mathrm{SANQ}}(M_n, B, G_n, G, H, p, x,\lambda) = \exp\left(M\left(\frac{x}{BG(\lambda)}\right)\right)/H(\lambda)$, 
\item $\widetilde{E}_{\mathrm{SA}}(M_n, B,\tilde{B}, G_n, G, H, p, c, x,\lambda) = \exp\left(M\left(\frac{x}{B G(\lambda)}\right) + Q\left(\frac{c \log x}{B \lambda  G(\lambda)}\right) \right)/H(\lambda)$, \linebreak[4] where $Q$ is the associated function of the sequence $M_n/n!$. The estimate \eqref{eqresultopti} holds for each $c > 0$, but naturally the implicit constant depends on $c$.
\item $\widetilde{E}_{\mathrm{SAINQ}}(M_n,B, G, H, x,\lambda) = \exp\left(M\left(\frac{x}{B G(\lambda)}\right)\right)/H(\lambda)$, 
\item $\widetilde{E}_{\mathrm{SAI}}(M_n,B, G, H, c, x,\lambda) = \exp\left(M\left(\frac{x}{B G(\lambda)}\right) + Q\left(\frac{c \log x}{B \lambda G(\lambda)}\right) \right)/H(\lambda)$, where $Q$ is the associated function of the sequence $M_n/n!$. Again the estimate \eqref{eqresultopti} holds for each $c > 0$, but the implicit constant depends on $c$.
\item $\widetilde{E}_{\mathrm{An}}(G, H,c, x,\lambda) = \exp\left(\frac{x}{G(\lambda)}\right)/(\lambda H(\lambda))$ if $c \log x \leq G(\lambda) \lambda$ and $\infty$ otherwise. Again any $c> 0$ in \eqref{eqresultopti} is allowed but the implicit constant depends on $c$.
\end{itemize}

The rest of this Section is dedicated to the proof of Theorem \ref{thrfrsop}.

\subsection{Proof of Theorem \ref{thrfrsop}: The duality argument}

First we argue that we may actually take any \emph{complex-valued} $S$ in the hypothesis of the statement of Theorem \ref{thrfrsop}. Indeed, if a complex-valued $S$ has a continuous derivative $|S'(x)| \leq f(x)$ and its Fourier transform is infinitely differentiable and belongs to $\widetilde{\mathcal{T}}_{\ast}$, then these properties also hold for $\Re S$ and  $\Im S$ since $\widehat{\Re S}(t) = (\hat{S}(t) + \overline{\hat{S}(-t)})/2$ and $\widehat{\Im S}(t) = (\hat{S}(t) - \overline{\hat{S}(-t)})/2i$ and the definition of $\widetilde{\mathcal{T}}_\ast$ is \emph{symmetric} with respect to the origin. Thus, applying the ``real-valued" hypothesis of Theorem \ref{thrfrsop}, one has $|S(x)| \leq |\Re S(x)| + |\Im S(x)| \ll_{S,W} 1/W(x)$ and the ``complex-valued" hypothesis therefore follows from the ``real-valued" one. 

We start by collecting all the functions that satisfy the hypotheses of the Tauberian theorem in a Fr\'echet space $V_{1}$, that is, all complex-valued continuously differentiable functions $S: (0,\infty) \rightarrow \C$ such that $|S'(x)| \ll_{S} f(x)$ and whose Fourier transform, initially interpreted as ultradistributional boundary values of the Laplace transform, is an infinitely differentiable function that belongs to the class $\widetilde{\mathcal{T}}_{\ast}$. The space $V_{1}$ is topologized via the countable family of seminorms
\begin{equation} \label{eqrfrsnorm}
 \left\|S\right\|_{1,\nu,m} =  \sup_{y > 0} \frac{|S'(y)|}{f(y)} +  \left\| \hat{S}\right\|_{\widetilde{\mathcal{T}}_{\ast}} + \sup_{\substack{ t \in [-\nu,\nu]\\ j \leq m}} |\hat{S}^{(j)}(t)|, \ \ \ \nu,m \in \mathbb{N},
\end{equation}
where $ \left\| g\right\|_{\widetilde{\mathcal{T}}_{\ast}}$ denotes the norm associated with $\widetilde{\mathcal{T}}_{\ast}$, that is,
\begin{itemize}
\item $ \left\| g\right\|_{\widetilde{\mathcal{T}}_{\mathrm{Dif}}} =  \left\|g^{(N)}/G\right\|_{L^{p}}$,
\item $ \left\| g\right\|_{\widetilde{\mathcal{T}}_{\mathrm{DifI}}} = \displaystyle{\sup_{R > 0} \frac{\int^{R}_{-R} |g^{(N)}(t)| \mathrm{d}t}{G(R)}}$,
\item  $ \left\| g\right\|_{\widetilde{\mathcal{T}}_{\mathrm{HC}}} = \displaystyle{\sup_{0 < \delta \leq \delta_0} \frac{1}{\omega(\delta)}\left\|\frac{\sup_{|u| \leq \delta}|g^{(N)}(t+u) - g^{(N)}(t)|}{G(t)}\right\|_{L^{p}} } $,
\item $ \left\| g\right\|_{\widetilde{\mathcal{T}}_{\mathrm{HCI}}} = \displaystyle{\sup_{R > 0} \sup_{0 < \delta \leq\delta_0} \frac{\int^{R}_{-R} \sup_{|u| \leq \delta}|g^{(N)}(t+u) - g^{(N)}(t) | \mathrm{d}t}{G(R) \omega(\delta)}}$,
\item $ \left\| g\right\|_{\widetilde{\mathcal{T}}_{\mathrm{SANQ}}} =  \left\| g\right\|_{\widetilde{\mathcal{T}}_{\mathrm{SA}}} = \displaystyle{\sup_{\substack{n \in \mathbb{N}\\ n \geq 1}} \left\|\frac{g^{(n)}(t)}{B^n M_n G_{n}(t)}\right\|_{L^{p}}}$,
\item $ \left\| g\right\|_{\widetilde{\mathcal{T}}_{\mathrm{SAINQ}}} =  \left\| g\right\|_{\widetilde{\mathcal{T}}_{\mathrm{SAI}}} = \displaystyle{\sup_{R > 0} \sup_{\substack{n \in \mathbb{N}\\ n \geq 1}} \frac{\int^{R}_{-R} |g^{(n)}(t)| \mathrm{d}t }{B^n M_{n} G(R)^{n}H(R)}}$,
\item $ \left\| g\right\|_{\widetilde{\mathcal{T}}_{\mathrm{An}}} = \displaystyle{ \sup_{|\Im z| \leq  1/G(|\Re z|)} \frac{|g(z)|}{H(|z|)}}$.
\end{itemize}
It is a standard verification that $V_1$ is complete for each $\widetilde{\mathcal{T}}_{\ast}$ and thus $V_1$ is in fact Fr\'echet. (Recall that our convention on analyticity on closed regions in this work asserts that a function $g \in \widetilde{\mathcal{T}}_{\mathrm{An}}$ only needs to be analytic on the interior of $\{z: |\Im z| \leq  1/G(|\Re z|)\}$ with a continuous extension to the boundary of this region.)

We let $V_{2}$ be the Fr\'echet space that consists of the functions $S \in V_1$ additionally satisfying the estimate $S(x) \ll_S 1/W(x)$; it is topologized via the countable family of seminorms
$$ \left\|S\right\|_{2,\nu,m} = \left\|S\right\|_{1,\nu,m} + \sup_{y > 0} |S(y)|W(y).
$$

Now, by the hypotheses of the Theorem \ref{thrfrsop}---$1/W$ is an admissible rate for the Tauberian theorem---both Fr\'echet spaces $V_{1}$ and $V_{2}$ consist of the same elements. Therefore, the open mapping theorem implies that the canonical injection $\iota : V_2 \rightarrow V_1$ is an isomorphism. Hence, there exists $\nu_0,m_0 \in \mathbb{N}$ such that
\begin{equation} \label{eqqikopenmap} \sup_{y > 0} |S(y)|W(y)  \ll_{f,\widetilde{\mathcal{T}}_{\ast},W} \left\|S\right\|_{1,\nu_0,m_0} \ \ \ \text{for all }S \in V_{1}.
\end{equation}

The next step is to select a suitable $S$.

\subsection{Proof of Theorem \ref{thrfrsop}: The test functions} \label{qiksectestfun}

We shall consider two distinct families of functions for $S$. One is designed to handle the boundary hypotheses $\widetilde{\mathcal{T}}_{\ast}$ of non-quasianalytic nature for the Laplace transform, that is $\widetilde{\mathcal{T}}_{\mathrm{Dif}}$, $\widetilde{\mathcal{T}}_{\mathrm{DifI}}$, $\widetilde{\mathcal{T}}_{\mathrm{HC}}$, $\widetilde{\mathcal{T}}_{\mathrm{HCI}}$, $\widetilde{\mathcal{T}}_{\mathrm{SANQ}}$ and $\widetilde{\mathcal{T}}_{\mathrm{SANQI}}$, and the other more complicated family handles the boundary hypotheses of more general subanalytic nature, that is $\widetilde{\mathcal{T}}_{\mathrm{SA}}$, $\widetilde{\mathcal{T}}_{\mathrm{SAI}}$ and $\widetilde{\mathcal{T}}_{\mathrm{An}}$. In the sequel we shall distinguish between these two cases.

\textbf{Non-quasianalytic $\widetilde{\mathcal{T}}_{\ast}$:} We select $S$ as 
$$S_{x,\lambda}(y) = h(\lambda(x-y)) \chi_{(0,\infty)}(y),$$
 where $h \in \mathcal{F}(\mathcal{D}(1,2))$ is a function such that $h(0) = 1$, $\max_{y \in \mathbb{R}} |h(y)| = 1$, $|h^{(j)}(y)| \ll_{\varepsilon,B,M_n} \exp(-\overline{M}(|y|))$ for $j = 0,1,2$ and $|\hat{h}^{(n)}(t)| \ll_{\varepsilon, B, M_n} \tilde{M}_n$. Here $\tilde{M}_n$ and $\overline{M}_{n}$ are two logarithmically convex, non-quasianalytic sequences and are constructed in such a fashion that they obey the properties described below. The function $\overline{M}(y)$ is the associated function of the sequence $\overline{M}_n$ and the existence of $h$ follows from the Denjoy-Carleman theorem.
 
First, the sequence $\tilde{M}_n$ is selected to be a logarithmically convex, non-quasianalytic minorant of the sequences\footnote{For $\widetilde{\mathcal{T}}_{\mathrm{Dif}}$, $\widetilde{\mathcal{T}}_{\mathrm{DifI}}$, $\widetilde{\mathcal{T}}_{\mathrm{HC}}$ or $\widetilde{\mathcal{T}}_{\mathrm{HCI}}$, one may simply pretend that $B =1$ and $M_{n} = n^{n/2}$, say.}  $2^{-n}B^{n}M_{n-2}$ (for $n \geq 2$), $B^n M_n$ and $(ne\inv\gamma\inv)^{n/\gamma}$ where $1/2 \leq \gamma < 1$ is such that $1-\gamma < \varepsilon$, say $\gamma = 1-\varepsilon/2$. Its associated function satisfies $\tilde{M}(y) \geq y^{\gamma} + O_{\gamma}(1)$, but also $\int^{\infty}_1 \tilde{M}(y)y^{-2} \dif y < \infty$, since $\tilde{M}_n$ is non-quasianalytic. 

The second sequence $\overline{M}_n$ is defined as $\overline{M}_n = \min_{0\leq j \leq n}\{\tilde{M}_j \tilde{M}_{n-j}\}$. One may verify that that its associated function is $\overline{M}(y) = 2\tilde{M}(y)$ and that the sequence is logarithmically convex, see e.g. \cite[Lemma 3.5]{komatsu}. As $\int^{\infty}_1 \overline{M}(y)y^{-2} \dif y = 2 \int^{\infty}_1 \tilde{M}(y)y^{-2} \dif y < \infty$, the sequence $\overline{M}_n$ is non-quasianalytic.
 
We will show later that $S_{x,\lambda} \in V_1$. From the above properties one can then deduce \eqref{eqrfrsm} below where $h_k$ and $S_{k,x,\lambda}$ are replaced with $h$ and $S_{x,\lambda}$ respectively. 

\textbf{Subanalytic $\widetilde{\mathcal{T}}_{\ast}$:} Obviously $\hat{S}_{x,\lambda}$ cannot belong to $\widetilde{\mathcal{T}}_{\mathrm{SA}}$, $\widetilde{\mathcal{T}}_{\mathrm{SAI}}$ or $\widetilde{\mathcal{T}}_{\mathrm{An}}$ if $M_n$ is quasianalytic. We therefore have to relinquish the compact support of $\hat{h}$ and replace it with sufficiently fast decay at $0$. This involves the introduction of an extra parameter $k$. We appeal to the ingenious construction from \cite[Lemma 2.1]{DebruyneSeifert2}; there one constructs a sequence of complex-valued differentiable functions $h_{k} \in L^{1}(\mathbb{R})$ (denoted $f_k$ in \cite{DebruyneSeifert2} and depending in principle also on the positive parameters $c$ and $\rho$) such that
\begin{enumerate}
 \item \label{qikpropseqlow} $\inf_{k\in \mathbb{N}} h_{k}(0) \geq  1$,
 \item \label{qikpropseql1} the functions $h_{k}$ and $h_{k}'$ are continuous and uniformly bounded in $L^1$, that is, $\| h_{k}\|_{L^{1}(\mathbb{R})} \ll 1$ and $\| h_{k}'\|_{L^{1}(\mathbb{R})} \ll 1$,
 \item \label{qikpropseqlinf} the sequence $h_{k}$ and its derivatives are uniformly bounded in $L^{\infty}$, that is, $\| h_{k}\|_{L^{\infty}(\mathbb{R})} \ll 1$ and $\| h'_{k}\|_{L^{\infty}(\mathbb{R})} \ll_{c,\rho} 1$,
 \item \label{qikpropseqdeczero}the Fourier transforms $\hat{h}_{k}(z)$ possess an analytic continuation to the ball $\{z : |z| \leq \rho\}$ where they satisfy $\hat{h}_{k}(z) \ll_{c,\rho} |z|^k (2\rho)^{-k}$,
 \item \label{qikpropseqana} the Fourier transforms $\hat{h}_{k}(z)$ can be analytically continued to the strip $\Lambda_{k,c} := \{z : |\Im z| \leq c^{-1}/\log (k+1)\}$ and are uniformly bounded there; more specifically 
 \begin{equation} \label{eqqikbostrip}
  \sup_{k \in \mathbb{N}} \sup_{z \in \Lambda_{k,c}} (1 + |z|^4) |\hat{h}_{k}(z)| \ll_{c} 1.
\end{equation}
\end{enumerate}

The weight $1/(1+ |z|^4)$ for $\hat{h}_{k}$ in \eqref{eqqikbostrip} is not explicitly mentioned in the statement of \cite[Lemma 2.1]{DebruyneSeifert2}, but is evident from the construction. The family of functions $S$ to be considered in \eqref{eqqikopenmap} is
$$S_{k,x,\lambda}(y) = h_{k}(\lambda(x-y)) \chi_{(0,\infty)}(y).$$

The functions $S_{x,\lambda}$ and $S_{k,x,\lambda}$ indeed belong to $V_1$; in the next sections we verify this by estimating the relevant norms by sometimes imposing requirements on the involved parameters; it is also clear that  $S_{x,\lambda}'$ and $S_{k,x,\lambda}'$ are continuous on $(0,\infty)$ and from the subsequent sections we will also derive that $\hat{S}_{x,\lambda}$ and $\hat{S}_{k,x,\lambda}$ are infinitely differentiable. Therefore, by applying \eqref{eqqikopenmap} and exploiting property (\ref{qikpropseqlow}), one obtains, for $x >0$,
\begin{equation} \label{eqrfrsm} 
 W(x) \leq \sup_{y > 0} |h_{k}(\lambda(x-y))| W(y) \ll_{f,\widetilde{\mathcal{T}}_{\ast},W}  \sup_{y > 0} \frac{|S_{k,  x,\lambda}'(y)|}{f(y)} +  \left\| \hat{S}_{k,  x,\lambda}\right\|_{\widetilde{\mathcal{T}}_{\ast}} + \sup_{\substack{|t| \leq \nu_0 \\ n \leq m_0}} \left|\hat{S}_{k,x,\lambda}^{(n)}(t)\right|.
\end{equation}
The rest of the proof is devoted to estimating these norms. 

\subsection{Proof of Theorem \ref{thrfrsop}: Analysis of the norm of the Tauberian condition}

We shall show that the relevant norms are $\ll \lambda/f(x) + \tilde{E}(x,\lambda)$. During the proof we shall sometimes assume that $\lambda \geq C$, for some constant $C$ depending only on $f, W$ and $\widetilde{\mathcal{T}}_{\ast}$, but this is allowed since the hypotheses imposed on these classes imply that $\inf_{1\leq \lambda \leq C} \widetilde{E}_{\ast}(x,\lambda) \gg_{C,\widetilde{\mathcal{T}}_{\ast}} \widetilde{E}_{\ast}(x,C)$. 

We start with by estimating the norm that encodes the Tauberian condition. This shall be $\ll \lambda/f(x)$. We begin with the family $S_{x,\lambda}$ for the non-quasianalytic classes $\widetilde{\mathcal{T}}_{\ast}$.

\textbf{Non-quasianalytic $\widetilde{\mathcal{T}}_{\ast}$:}
We now analyze the weighted $L^{\infty}$-norm in \eqref{eqrfrsm} of the family $S_{x,\lambda}$. If the supremum is restricted to $y \geq x$, the desired estimate $O(\lambda/f(x))$ clearly follows as $f$ is non-decreasing and $h' \in L^{\infty}(\mathbb{R})$. 

Let $C$ be such that $|f'(x)/f(x)| \leq Cx^{-\varepsilon}$. In the range $C'x := C^{1/\varepsilon}x/(1+C^{1/\varepsilon}) \leq y \leq x$, we find, since $\lambda > 1$,  
\begin{align*}
 \frac{|h'(\lambda(x-y))| }{f(y)} & \ll_{\varepsilon} \exp\left(-|x-y|^{\gamma} - \log f(x) + |x-y| \sup_{C'x \leq u \leq x} \left|\frac{f'(u)}{f(u)}\right| \right)\\
 & \ll f(x)^{-1} \exp(-|x-y|^{\gamma} + (1+C^{1/\varepsilon})^{\varepsilon}|x-y|x^{-\varepsilon})\\
& \ll f(x)^{-1} \exp(-|x-y|^{\gamma} + |x-y|^{1-\varepsilon}) \ll f(x)^{-1},
\end{align*}     
by the mean-value theorem applied to $\log f$. Finally, as $f \gg 1$ we find for $0 < y <  C'x$ that
\begin{align*}
 \frac{|h'(\lambda(x-y))| }{f(y)}  & \ll_{\varepsilon} f(x)^{-1}\exp\left(-(1-C')^{\gamma}x^{\gamma} + \log f(x)\right) \\
& \ll f(x)^{-1}\exp\left(-(1-C')^{\gamma}x^{\gamma} + C'' x^{1-\varepsilon} \right) \ll_{C',C'',\varepsilon} f(x)^{-1}, 
\end{align*}
after choosing $C''$ such that $\log f(x) \leq C'' x^{1-\varepsilon}$.

\textbf{Subanalytic $\widetilde{\mathcal{T}}_{\ast}$:} The argument for $S_{k,x\lambda}$ is similar to the one for the family $S_{x,\lambda}$ and we only briefly discuss the changes. The main difference is that we now use the estimate $|h'_{k}(\lambda(x-y))| \ll_{c,\rho} \exp(-c^{-1} |x-y|/\log (k+1))$ which follows from (\ref{qikpropseqana}) after moving the contour in the formula $h_{k}'(y) = (2\pi)^{-1} \int^{\infty}_{-\infty} it e^{iyt} \hat{h}_{k}(t) \dif t$. Working with this alternative bound generates an additional restriction. In the range $C' x \leq y \leq x$, the desired estimate now follows from $\exp(-c^{-1}|x-y|/\log(k+1) + (1 + C^{1/\varepsilon})^{\varepsilon} |x-y| x^{-\varepsilon}) \ll 1 $, which is only valid if one accepts 
\begin{equation} \label{eqqikextracondk} x \geq C_{c,\varepsilon} (\log (k+1))^{1/\varepsilon},
\end{equation}
for a sufficiently large constant $C_{c,\varepsilon}$, depending only on $c$ and $\varepsilon$ and $C$. The same restriction \eqref{eqqikextracondk} (with $C_{c,\varepsilon}$ additionally depending on $C''$) is also necessary in the estimation of the range $0 < y < C'x$. We shall later verify that our eventual choice for $k$ indeed fulfills \eqref{eqqikextracondk}. 

\subsection{Proof of Theorem  \ref{thrfrsop}: The contribution of the negative half-axis} \label{qiksecnegaxis}

For the estimation of the $ \| \cdot \|_{\widetilde{\mathcal{T}}_{\ast}}$-norm, we write $S_{k,x,\lambda}(y) = h_{k,x,\lambda}(y) - h_{k,x,\lambda}(y)\chi_{(-\infty,0]}$ where $h_{k,x,\lambda}(y) = h_{k}(\lambda(x-y))$ and estimate the norm of both terms separately. $S_{x,\lambda}$ is decomposed analogously. In this section we analyze the norm of the function that is supported on the negative axis. 

The estimation of $|\hat{S}_{k,x,\lambda}^{(n)}(t)|$ and $|\hat{S}_{x,\lambda}^{(n)}(t)|$ for $t \in [-\nu_0,\nu_0]$ and $n \leq m_0$ happens analogously by decomposing $S$ into the parts supported on the positive and negative half-axes, but shall be technically simpler than for the $ \| \cdot \|_{\widetilde{\mathcal{T}}_{\ast}}$-norm.

\textbf{Non-quasianalytic $\widetilde{\mathcal{T}}_{\ast}$:} Let $h_{x,\lambda,-}(y) = h(\lambda(x-y))  \chi_{(-\infty,0]}(y)$; we estimate $\hat{h}^{(n)}_{x,\lambda,-}$. We obtain for $n \geq 2$, all real $x, \lambda, |t| \geq 1$ that 
\begin{align*} \hat{h}^{(n)}_{x,\lambda,-}(t) & = \int^{0}_{-\infty} e^{-iyt} (-iy)^n h(\lambda(x-y)) \dif y = -\frac{1}{t^2} \int^{0}_{-\infty} e^{-iyt} \left((-iy)^n h(\lambda(x-y))\right)'' \dif y \displaybreak[1] \\ 
& \ll  |t|^{-2} \int^{-1}_{-\infty} n^2 |y|^n \max_{j \in \{0,1,2\}} \lambda^{j} |h^{(j)}(\lambda(x-y))| \dif y  \\
& \ \ \ \ \ + |t|^{-2}\int^{0}_{-1} n^2  \max_{j \in \{0,1,2\}} \lambda^{j} |h^{(j)}(\lambda(x-y))| \dif y =: I_1 + I_2. 
\end{align*}
Because of the bound $|h^{(j)}(y)| \ll_{\varepsilon,B,M_n} \exp(-\overline{M}(|y|)) \ll_{\varepsilon} \exp(-|y|^{\gamma})$, $j= 0,1,2$, the second integral becomes 
$$I_2 \ll_{\varepsilon,B,M_n} |t|^{-2}n^2 \lambda^2 \exp(-\lambda^{\gamma} x^{\gamma}) \leq |t|^{-2} n^2\lambda^2 \exp(-\lambda^{\gamma} - x^{\gamma}) \ll_\varepsilon \frac{|t|^{-2}n^2}{f(x)} \ll_{B,M_n} \frac{B^n M_n}{|t|^{2}f(x)},$$
for $\lambda, x \geq 4$, say, since $\log f(x) \ll x^{1-\varepsilon}$. The first integral is estimated as 
\begin{align*}
I_1 & \ll_{\varepsilon, B, M_n} |t|^{-2} \int^{-1}_{-\infty} n^2 |y|^n \lambda^2 \exp\left(- \overline{M}(\lambda(x + |y|))\right) \dif y\\
& \ll |t|^{-2} \int^{-1}_{-\infty} n^2 |y|^n \lambda^2 \exp\left(- \tilde{M}(\lambda x) - \tilde{M}(\lambda |y|)\right) \dif y\\
& \ll_{\varepsilon} |t|^{-2} \exp(-x^{\gamma}) \int^{-1}_{-\infty} \frac{n^2|y|^n\tilde{M}_{n+2}}{\lambda^{n} |y|^{n+2}} \dif y \ll_{\varepsilon} \frac{n^2 \tilde{M}_{n+2} }{|t|^{2} f(x)} \ll \frac{ B^n M_n}{|t|^{2} f(x)},
\end{align*}   
where we used $\lambda \geq 1$, $\overline{M}(\lambda(x + |y|) = 2\tilde{M}(\lambda(x+|y|)) \geq \tilde{M}(\lambda x) + \tilde{M}(\lambda |y|)$, $\tilde{M}(x) \geq x^{\gamma} + O_{\gamma}(1) $ and $\log f(x) \ll x^{1-\varepsilon}$ with $1-\varepsilon < \gamma$. In conclusion, we obtain $|\hat{h}^{(n)}_{x,\lambda,-}(t) |\ll_{\varepsilon,B,M_n} B^n M_n |t|^{-2}/f(x)$ for $n \geq 2$, real $|t| \geq 1$, $x \geq 4$ and $\lambda \geq 4$. 

With the same calculation, one also achieves $|\hat{h}'_{x,\lambda,-}(t)| \ll_{\varepsilon, B,M_n} |t|^{-2}/f(x)$ under the same assumptions for $x, \lambda$ and $|t|$; the only difference is that in the second integration by parts, a contribution from the boundary at $0$ appears, but it is also $\ll_{\varepsilon,B,M_n} |t|^{-2}/f(x)$. 
Similarly, by refraining to integrate by parts, one deduces $\hat{h}^{(n)}_{x,\lambda,-}(t) \ll_{\varepsilon,B,M_n} B^n M_n/f(x)$ for $|t| \leq 1$ for all $n \in \mathbb{N}$, including $n =0$ (under $x,\lambda \geq 4$). With these estimates it is then easy to see that $\| \hat{h}_{x,\lambda,-} \|_{\widetilde{\mathcal{T}}_{\ast}} \ll_{\varepsilon, B, M_n,\widetilde{\mathcal{T}}_{\ast}} 1/f(x) \leq \lambda/f(x)$ for all $\widetilde{\mathcal{T}}_{\ast}$ of non-quasianalytic nature; note that we here use the assumptions $\| (1+|t|)^{-2}/G(t) \|_{L^{p}} \ll 1$ and $\| (1+|t|)^{-2}/G_n(t) \|_{L^{p}} \ll 1$ for $\widetilde{\mathcal{T}}_{\mathrm{Dif}}$, $\widetilde{\mathcal{T}}_{\mathrm{HC}}$ and $\widetilde{\mathcal{T}}_{\mathrm{SANQ}}$ respectively.

From the above considerations it follows immediately that $|\hat{h}^{(n)}_{x,\lambda,-}(t)| \ll_{\varepsilon,B,M_n,m_0} 1/f(x)$ for all $t \in [-\nu_0,\nu_0]$ and $n \leq m_0$.

\textbf{Subanalytic $\widetilde{\mathcal{T}}_{\ast}$:} For the sequence $h_{k}$ we cannot quite perform the same argument as above as we require a larger region of analyticity for $\hat{h}_{k,x,\lambda,-}(t)$ near $t = 0$ than near $t = \lambda$, say; the reasoning above together with $h_{k}(y) \ll_{c,\rho} \exp(-c^{-1}|y|/\log(k+1))$ only delivers bounds for $\hat{h}_{k}(z)$ in horizontal strips. So, in order to efficiently capture the analyticity of $\hat{h}_{k}$ in hourglass-shaped regions, an additional more subtle argument is needed.

Let again $h_{k,x,\lambda,-}(y) = h_{k}(\lambda(x-y))  \chi_{(-\infty,0]}(y)$ and $h_{k,x,\lambda,+}(y) =  h_{k}(\lambda(x-y)) \chi_{(0,\infty)}(y)$. We do begin with an estimate in the half-plane $\Im z \geq -\lambda(2c\log(k+1))\inv$. We note first that the property \eqref{eqqikbostrip} implies, after switching the contour in $h_{k}^{(j)}(y) = (2\pi)^{-1} \int^{\infty}_{-\infty} (it)^j e^{iyt} \hat{h}_{k}(t) \dif t$, that $h_{k}^{(j)}(y) \ll_{c,\rho} \exp(-c^{-1}|y|/\log(k+1))$ for $j = 0,1,2$. 

So, in the half-plane $\Im z \geq -\lambda(2c\log(k+1))\inv$ we obtain
\begin{align*}
&\hat{h}_{k,x,\lambda,-}(z)  = \int^{0}_{-\infty} e^{-izy} h_{k}(\lambda(x-y)) \dif y = - \frac{h_{k}(\lambda x)}{iz} - \frac{\lambda}{iz} \int^{0}_{-\infty} e^{-izy} h'_{k}(\lambda(x-y)) \dif y \\
& \ \ \ \ll_{c,\rho} \frac{\exp(-c^{-1}x/\log(k+1))}{|z|} + \frac{\lambda}{|z|} \int^0_{-\infty} \exp\left(-\frac{\lambda x}{c \log(k+1)} - \frac{\lambda|y|}{2c \log(k+1)}\right) \dif y \\
& \ \ \ \ll_c \frac{ \log (k+1)}{|z|} \exp\left(-\frac {x}{c \log(k+1)}\right) \ll_{c,\varepsilon} \frac{1}{|z| f(x) },
\end{align*}
since $\log f(x) \ll_{\varepsilon} x^{1-\varepsilon}$ if, say, $k \ll  x$ which our later choice of $k$ shall verify (although the implicit constant will depend on some admissible parameters). Note that we crucially use here that $y \leq 0$  (and the lower bound for $\Im z$) to bound $|e^{-izy}|$. Naturally, we also achieve $\hat{h}_{k,x,\lambda,-}(z) \ll_{c,\rho,\varepsilon} 1/f(x)$ if one does not integrate by parts.

If $\Im z \leq -\lambda(2c\log(k+1))\inv$, we write $ \hat{h}_{k,x,\lambda,-}(z) = \hat{h}_{k,x,\lambda}(z) - \hat{h}_{k,x,\lambda,+}(z)$ and estimate $\hat{h}_{k,x,\lambda,+}$. We find
\begin{align*}
 \hat{h}_{k,x,\lambda,+}(z) & = \int^{\infty}_0 e^{-izy} h_{k}(\lambda(x-y)) \dif y = \frac{h_k(\lambda x)}{iz} - \frac{\lambda}{iz}\int^\infty_0 e^{-izy} h_{k}'(\lambda(x-y)) \dif y \\
 & \ll_{c,\rho} \frac{\exp(-c^{-1}x/\log(k+1))}{|z|} + \frac{\lambda}{|z|}\int^{\infty}_{0} \exp\left(- \frac{\lambda y + \lambda |x-y|}{2c\log(k+1)}\right) \dif y \\
 & \ll_c \frac{\lambda}{|z|}  \max_{y \geq 0} \exp\left(- \frac{\lambda y + \lambda |x-y|)}{4c\log(k+1)}\right) \int^{\infty}_{0} \exp\left(-\frac{\lambda y }{4c\log(k+1)}\right) \dif y  \\
 & \ll_c \frac{\log (k+1)}{|z|} \exp\left(-\frac{x}{4c\log(k+1)}\right) \ll_{c,\varepsilon} \frac{1}{|z| f(x)},
\end{align*}
if $k \ll x$, say. Analogously, if one does not integrate by parts, one obtains $|\hat{h}_{k,x,\lambda,+}(z)| \ll 1/f(x)$.

Summarizing, after inserting $\hat{h}_{k,x,\lambda}(z) = e^{-ixz}\lambda^{-1} \hat{h}_{k}(-z/\lambda)$ and exploiting the bound from property (\ref{qikpropseqdeczero}), we obtain, say, if $\rho \geq 2$, that
\begin{equation} \label{eqqiknegaest}
\hat{h}_{k,x,\lambda,-}(z) \ll_{c,\rho,\varepsilon} \begin{cases}
 \frac{\min\{1, |z|^{-1}\}}{f(x)} & \text{ if }  |\Im z| \leq \frac{\lambda }{2c \log(k + 1)}, \\
  \frac{\exp(x/G(0))}{\lambda 2^k} + \frac{\min\{1, |z|^{-1}\}}{f(x)} & \text{ if } |z| \leq 2\lambda \text{ and }  |\Im z| \leq 1/G(0),
\end{cases}
\end{equation}
provided $k \ll x$. Here $G(0)$ corresponds to the function $G$ in the definition of $\widetilde{\mathcal{T}}_{\mathrm{An}}$. For $\widetilde{\mathcal{T}}_{\mathrm{SAI}}$ and $\widetilde{\mathcal{T}}_{\mathrm{SA}}$, we shall modify the strip $|\Im z| \leq 1/G(0)$, see below. 

For $\widetilde{\mathcal{T}}_{\mathrm{An}}$, the estimate \eqref{eqqiknegaest} then leads to $\| \hat{h}_{k,x,\lambda,-}\|_{\widetilde{\mathcal{T}}_{\mathrm{An}}} \ll_{c,\rho, \varepsilon, G, H} 1/f(x)$ after selecting $k = \lfloor 2x/(G(0) \log 2)\rfloor + 1$ and provided that 
\begin{equation} \label{eqqikcondnegpart}
 \lambda \geq \frac{1}{G(0)}, \ \ \ \text{and} \ \ \ \frac{\lambda}{2c\log(k+1)} \geq \frac{1}{G(\lambda)},
\end{equation}
since $tH(t)$ is non-decreasing and $H(t)$ is bounded from below near $t = 0$. The restrictions \eqref{eqqikcondnegpart} serve to guarantee that if $|z| \geq 2\lambda$ with $|\Im z| \leq 1/G(|\Re z|)$, then $|\Im z| \leq \lambda(2c\log(k+1))\inv$ must hold and thus the first bound of \eqref{eqqiknegaest} is applicable. Note that with our choice for $k$, the second restriction of \eqref{eqqikcondnegpart} is fulfilled for sufficiently large $x$ due to $c\log x \leq \lambda G(\lambda)$, the assumption in $\widetilde{E}_{\mathrm{An}}$, after replacing $c$ in \eqref{eqqikcondnegpart} (and throughout the proof) with $c/3$, say. 

For $\widetilde{\mathcal{T}}_{\mathrm{SA}}$ and $\widetilde{\mathcal{T}}_{\mathrm{SAI}}$ we adjust the second estimate of \eqref{eqqiknegaest}. For $\widetilde{\mathcal{T}}_{\mathrm{SAI}}$ we use instead
\begin{equation} \label{eqqiknegaesadj}
\hat{h}_{k,x,\lambda,-}(z) \ll_{c,\rho,\varepsilon}  \frac{\exp(x/BL)}{\lambda 2^k} + \frac{\min\{1, |z|^{-1}\}}{f(x)} \ \ \text{ if } |z| \leq 2\lambda \text{ and }  |\Im z| \leq 1/BL,
\end{equation}
where $L$ refers to (SA), see section \ref{qiksecprelim}. For $\widetilde{\mathcal{T}}_{\mathrm{SA}}$ one has to replace $L$ with $\tilde{B}\inv$ everywhere in \eqref{eqqiknegaesadj}. Then, if $\lambda \geq 1/(BL)$, Cauchy's inequalities yield for real $t$ that
\begin{equation} \label{eqqiknegaesderiv}\hat{h}_{k,x,\lambda,-}^{(n)}(t) \ll_{c,\rho, \varepsilon,B, M_n}B^n L^n n!\left(\frac{\exp(x/BL)}{\lambda 2^k} + \frac{\min\{1, |t|^{-1}\}}{f(x)}\right), \ \ \ |t| \leq \lambda, \ \ \ n \geq 1.
\end{equation}
Therefore, when estimating the norm $\|\hat{h}_{k,x,\lambda,-}\|_{\widetilde{\mathcal{T}}_{\mathrm{SAI}}}$, the contribution of the part $|t| < \lambda$ is $\ll_{c,\rho,\varepsilon,\widetilde{\mathcal{T}}_{\mathrm{SAI}}} \log(1+\lambda)/f(x)$ after selecting $k = \lfloor 2x/(BL\log2)\rfloor +1$. Similarly, the contribution of the part $t < \lambda$ to the norm $\|\hat{h}_{k,x,\lambda,-}\|_{\widetilde{\mathcal{T}}_{\mathrm{SA}}}$ is $\ll_{c,\rho,\varepsilon,\widetilde{\mathcal{T}}_{\mathrm{SA}}} \lambda/f(x)$ provided that $\lambda \geq \tilde{B}/B$ after selecting $k = \lfloor2\tilde{B}/(B\log 2)\rfloor + 1$ and appealing to \eqref{eqqiknewcondsa}. Both of these estimates are acceptable. Note also that all of our choices for $k$ verify \eqref{eqqikextracondk} for sufficiently large $x$ (depending only on the sequence $M_n$, $\tilde{B}$, $B$, $c$, $\varepsilon$, $C'$ and $C''$).

For the range $|t| \geq \lambda$, we proceed in the same way as for the family $S_{x,\lambda}$. However, instead of $h^{(j)}(y) \ll_{M_n} \exp(-\overline{M}(y))$, we use $h_{k}^{(j)}(y) \ll_{c,\rho} \exp(-c^{-1}|y|/\log(k+1))$ for $j = 0,1,2$. Integrating by parts twice the formula $\hat{h}_{k, x, \lambda, -}^{(n)}(t) = \int^{0}_{-\infty} (-iy)^n e^{-ity} h_{k}(\lambda(x-y))\dif y$ and inserting the adjusted exponential bound now delivers 
$$ \hat{h}_{k,x, \lambda, -}^{(n)}(t)\ll_{c,\rho} \frac{n! c^n \log^{n+1} (k+1)}{ \lambda^{n-1}|t|^2}, \ \ \ n \geq 1,
$$
after some computations. Thus, the contribution of the part $|t| \geq \lambda$ to $\|\hat{h}_{k,x,\lambda,-}\|_{\widetilde{\mathcal{T}}_{\mathrm{SAI}}}$ is, in view of the non-decrease of $G$ and $H$, 
\begin{equation} \label{eqqiknegpartsuban} \ll_{c,\rho} \sup_{n \geq 1 }\frac{n! c^n \log^{n+1}(k+1)}{ \lambda^n B^n M_n G(\lambda)^n H(\lambda)}.
\end{equation}
We delay the estimate of this quantity to section \ref{qiksecopticon} as we will encounter similar, but more complicated expressions there. In view of \eqref{eqqiknewcondsa}, the contribution of $|t| \geq \lambda$ for the norm $\|\hat{h}_{k,x,\lambda,-}\|_{\widetilde{\mathcal{T}}_{\mathrm{SA}}}$ is also of the form \eqref{eqqiknegpartsuban}, but with an extra factor $\lambda$.

Observe finally that \eqref{eqqiknegaesadj} and \eqref{eqqiknegaesderiv} imply for $n \leq m_0$ and $|t| \leq \nu_0$ that $\hat{h}^{(n)}(t) \ll_{c,\rho,\varepsilon, \widetilde{\mathcal{T}}_{\mathrm{SAI}}} 1/f(x)$ as soon as $\lambda \geq \nu_0$ with the given choice of $k$ in $\widetilde{\mathcal{T}}_{\mathrm{SAI}}$. The modifications for $\widetilde{\mathcal{T}}_{\mathrm{An}}$ and $\widetilde{\mathcal{T}}_{\mathrm{SA}}$ are obvious.

\subsection{Conclusion of the proof of Theorem \ref{thrfrsop}: Analysis of the norm for $\widetilde{\mathcal{T}}_\ast$ } \label{qiksecopticon}
Now we conclude the proof of Theorem  \ref{thrfrsop} by estimating $\|\hat{h}_{k,x,\lambda}\|_{\widetilde{\mathcal{T}}_\ast}$ and $\|\hat{h}_{x,\lambda}\|_{\widetilde{\mathcal{T}}_\ast}$, where $h_{k,x,\lambda}(y)$ and $h_{x,\lambda}(y)$ are respectively $h_{k}(\lambda(x-y))$ and $h(\lambda(x-y))$. We recall their Fourier transforms are $\hat{h}_{k,x,\lambda}(t) =  \lambda^{-1} e^{-ixt} \hat{h}_{k}(-t/\lambda)$ and $\hat{h}_{x,\lambda}(t) =  \lambda^{-1} e^{-ixt} \hat{h}(-t/\lambda)$.

Note also that $\sup_{|t|\leq \nu_0, n \leq m_0} |\hat{h}^{(n)}_{x,\lambda}(t)| = 0$ as soon as $\lambda \geq \nu_0$, while property (\ref{qikpropseqdeczero}), see also \eqref{eqqikderdeczero} below, implies that $\sup_{|t|\leq \nu_0, n \leq m_0} |\hat{h}^{(n)}_{k,x,\lambda}(t)| \ll_{m_0,c,\rho} (1+x^{m_0}) 2^{-k} \ll_{m_0,\widetilde{\mathcal{T}}_{\ast},\varepsilon} 1/f(x)$ as soon as $\lambda \geq \nu_0$ with the choices for $k$ that were made in section \ref{qiksecnegaxis}.

\textbf{Non-quasianalytic $\widetilde{\mathcal{T}}_{\ast}$:} We only estimate in detail the norm $\|\hat{h}_{x,\lambda}\|_{\widetilde{\mathcal{T}}_{\mathrm{SAINQ}}}$. The only modifications for other hypotheses $\widetilde{\mathcal{T}}_{\ast}$ of non-quasianalytic nature, are either the insertion of an $L^p$-estimate, such as \eqref{eqrfrscogn}, or it involves the simpler case of only finitely many derivatives. At the end, we briefly explain how one handles the H\"older continuity of $\widetilde{\mathcal{T}}_{\mathrm{HC}}$ and $\widetilde{\mathcal{T}}_{\mathrm{HCI}}$.

Since $G \geq 1$, the functions $G,H$ are both non-decreasing, the sequence $M_n/n!$ is logarithmically convex and the support of $\hat{h}$ belongs to the interval $(1,2)$ with $\hat{h}^{(n)} \ll_{\varepsilon,B,M_n} B^n M_n$, we obtain, with a slight abuse of the notation of the derivative,
\begin{align*}
& \sup_{n \in \mathbb{N}}\sup_{R > 0} \int^{R}_{-R}\frac{|\hat{h}^{(n)}_{x,\lambda}(t)| \mathrm{d}t}{B^n M_n G(R)^{n} H(R)} = \lambda^{\inv} \sup_{n \in \mathbb{N}}\sup_{R > 0} \int^{R}_{-R}\frac{\abs[2]{\left(e^{-ixt}\hat{h}(-\lambda^{\inv}t)\right)^{(n)}} \mathrm{d}t}{B^n M_n G(R)^{n} H(R)} \\
 & \quad \quad \leq \sup_{n \in \mathbb{N}} \sup_{R > 0} \sum_{j = 0}^{n} {n \choose j} \frac{x^{j}\lambda^{-n+j}}{B^{n} M_{n}G(R)^{n}H(R)}  \int^{\lambda^{-1}R}_{-\lambda^{-1}R} |\hat{h}^{(n-j)}(t)| \mathrm{d}t \\
& \quad \quad \ll_{\varepsilon,B,M_n} \frac{1}{H(\lambda)} \sup_{n \in \mathbb{N}} \sum_{j = 0}^{n} {n \choose j} \frac{x^{j} M_{n-j} \lambda^{-n+j}}{B^{j} M_{n}G(\lambda)^{n}}
 \leq \frac{1}{H(\lambda)} \sup_{n \in \mathbb{N}}  \sum_{j = 0}^{n} \frac{x^j}{B^j M_j G(\lambda)^j}  \frac{n! M_j M_{n-j}}{ M_n j! (n-j)! } \lambda^{-n+j} \\
&\quad \quad \leq \frac{1}{H(\lambda)} \exp\left(M\left(\frac{x}{B G(\lambda)}\right)\right) \sum_{j = 0}^n \frac{n! M_j M_{n-j}}{M_n j! (n-j)! } \lambda^{-n+j} 
 \ll_{M_0}  \frac{1}{H(\lambda)} \exp\left(M\left(\frac{x}{B G(\lambda)}\right)\right),
\end{align*}
if $\lambda \geq 2$, say. This concludes the proof of Theorem \ref{thrfrsop} for $\widetilde{\mathcal{T}}_{\mathrm{SAINQ}}$.

We briefly mention the changes for $\widetilde{\mathcal{T}}_{\mathrm{HC}}$ and $\widetilde{\mathcal{T}}_{\mathrm{HCI}}$. One has to adjust the support of $\hat{h}$ in section \ref{qiksectestfun} to $(4/3,5/3)$ to ensure that $\hat{h}(\lambda^{-1}(t+u)) = 0$ for $t \notin [\lambda, 2\lambda]$ as $\lambda \geq 1$ and $|u| \leq \delta < \delta_0 \leq 1/3$, and one requires the additional estimate 
$$ \sup_{0 < \delta \leq \delta_0} \sup_{|u| \leq \delta} \frac{|e^{-ix(t+u)} - e^{-ixt}|}{\omega(\delta)} \ll \frac{1}{\omega(1/x)}.
$$
If $\delta \geq 1/x$, this follows by bounding the complex exponentials by $1$ and the non-decrease of $\omega$. If $\delta \leq 1/x$, one can estimate the left-hand side by $\ll x\delta/\omega(\delta) \ll 1/\omega(1/x)$ where the last step is justified as $\omega$ is non-decreasing and subadditive.

\textbf{Subanalytic $\widetilde{\mathcal{T}}_{\ast}$:} We consider $\widetilde{\mathcal{T}}_{\mathrm{SAI}}$ in detail; the adjustments for $\widetilde{\mathcal{T}}_{\mathrm{SA}}$ should pose no additional difficulties. 

In contrast to the family $h_{x,\lambda}$, we can no longer exploit the support of $h_{k,x,\lambda}$ to argue that the contribution of the part $R \leq \lambda$ to the norm $\| \hat{h}_{k,  x,\lambda}\|_{\widetilde{\mathcal{T}}_{\mathrm{SAI}}}$ is negligible. We use instead the decay of $\hat{h}_{k}$ near the origin. Via Cauchy's inequalities, property (\ref{qikpropseqdeczero}) implies that
\begin{equation} \label{eqqikderdeczero}\hat{h}_{k}^{(n)}(t) \ll_{c,\rho}  \frac{2^n n! }{2^k \rho^n}, \ \ \ |t| \leq 1, \ \ \ n \in \mathbb{N},
\end{equation}
if $\rho \geq 2$, say. Recall that $L$ is a fixed number such that $n! \ll_{M_n} L^{-n} M_n$ as the sequence $M_n$ is subanalytic. From \eqref{eqqikderdeczero} we obtain for $R \leq \lambda$ that
\begin{align*} 
\frac{1}{\lambda B^n M_n} \int^{R}_{-R} \abs[2]{\left(e^{-ixt}\hat{h}_{k}(-\lambda^{\inv} t)\right)^{(n)}} \mathrm{d}t
& \ll_{c,\rho} \sum_{j = 0}^n {n \choose j} \frac{x^j \lambda^{-n+j} 2^{n-j} (n-j)! }{B^n M_n 2^k \rho^{n-j}} \\
& \ll_{M_n} \sum_{j = 0}^n \frac{x^j  2^{n-j}}{j! 2^k \rho^{n-j} L^n B^n} \ll 2^{-k} \sum_{j = 0}^{n}  \frac{x^j }{L^j B^j j!} \\
& \ll \exp\left(\frac{x}{LB} - k \log 2\right) \ll_{\varepsilon, L,B} 1/f(x),
\end{align*}
if $\rho \geq 2/(LB)$ and $k \geq 2x/(LB\log 2)$, say, which our choice for $k$ in section \ref{qiksecnegaxis} for $\widetilde{\mathcal{T}}_{\mathrm{SAI}}$ verifies. 

It only remains to analyze the norm $\| \hat{h}_{k,  x,\lambda}\|_{\widetilde{\mathcal{T}}_{\mathrm{SAI}}}$ if the quantity $R$ in the definition of the norm is larger than $\lambda$. We now exploit (\ref{qikpropseqana}), the bound \eqref{eqqikbostrip} implies for real $t$ that
$$ \hat{h}_{k}^{(n)}(t) \ll_{c} \frac{c^n \log^n(k+1) n!}{1 + |t|^4}.
$$
Inserting this bound into the expression for the norm $\| \hat{h}_{k, x,\lambda}\|_{\widetilde{\mathcal{T}}_{\mathrm{SAI}}}$, a similar calculation as carried out above for $h_{x,\lambda}$ then yields, since $G, H$ are non-decreasing and $R \geq \lambda$,
\begin{align*}
 & \lambda^{\inv} \sup_{\substack{n \in \mathbb{N} \\ R \geq \lambda}} \int^{R}_{-R}\frac{\abs[2]{\left(e^{-ixt}\hat{h}_k(-\lambda^{\inv} t)\right)^{(n)}} \mathrm{d}t}{B^n M_n G(R)^{n} H(R)} \\
 &  \ \ \  \qquad \ll_{c} \sup_{\substack{n \in \mathbb{N} \\ R \geq \lambda}} \sum_{j = 0}^n {n \choose j} \frac{x^j c^{n-j} \log^{n-j}(k+1) (n-j)!}{B^n M_n G(\lambda)^n H(\lambda) \lambda^{n-j}} \int^{R}_{-R} \left(1 + \frac{|t|}{\lambda}\right)^{-4} \lambda^{-1} \dif t \\
 & \ \ \  \qquad \ll \frac{1}{H(\lambda)} \sup_{n \in \mathbb{N}} \sum_{j = 0}^n \frac{x^j}{B^j M_j G(\lambda)^j} \frac{2^{n-j} c^{n-j} \log^{n-j}(k+1) (n-j)!}{B^{n-j}M_{n-j} \lambda^{n-j} G(\lambda)^{n-j}} \frac{n! M_j M_{n-j}}{M_n j! (n-j)!2^{n-j}} \\
 & \ \ \  \qquad \ll_{M_0} \frac{1}{H(\lambda)} \exp\left(M\left(\frac{x}{BG(\lambda)}\right) + Q\left(\frac{2c\log(k+1)}{B\lambda  G(\lambda)}\right)\right), 
\end{align*}
with $M$ and $Q$ the associated functions of the sequences $M_n$ and $Q_n$ respectively. Here we exploited that $Q_n = M_n/n!$ is logarithmically convex and thus that $Q_{n} \geq Q_{j}Q_{n-j}/Q_{0}$. We note that \eqref{eqqiknegpartsuban} can be bounded by the term corresponding to $j = 1$ in the second line of the above estimation as $c \log^2(k+1)/(x\lambda) \ll_c \log^2(k+1)/x \ll_{B,L} 1$, and similarly if \eqref{eqqiknegpartsuban} were to be multiplied with $\lambda$. Inserting the choice for $k$ and replacing throughout the proof $c$ with $c/3$, say, then delivers the desired estimate given the increasing nature of $Q$. This concludes the proof of Theorem \ref{thrfrsop} for ${\widetilde{\mathcal{T}}_{\mathrm{SAI}}}$. 

The analysis for ${\widetilde{\mathcal{T}}_{\mathrm{SA}}}$ is similar, except that one now has to use the bounds \eqref{eqqiknewcondsa}. There arises an extra factor $\lambda$ in the estimation of the part $|t| \leq \lambda$, but this is acceptable.  

The treatment of ${\widetilde{\mathcal{T}}_{\mathrm{An}}}$ is also fairly similar. For $|z| \leq \lambda + G(0)\inv$, say, with $|\Im z| \leq 1/G(0)$, one uses (\ref{qikpropseqdeczero}), say, assuming $\rho \geq 1 + G(0)\inv$, to find $\lambda^{-1} |e^{-ixz}\hat{h}_{k}(-z/\lambda)| \ll_{c,\rho} \exp(x/G(0) - k\log 2) \ll_{G,\varepsilon} 1/f(x)$ with the appropriate choice for $k = \lfloor 2x/(G(0)\log2)\rfloor + 1$ made in section \ref{qiksecnegaxis} for the class ${\widetilde{\mathcal{T}}_{\mathrm{An}}}$. 
For $|z| \geq \lambda + G(0)\inv$ but still satisfying $|\Im z| \leq 1/G(|\Re z|)$, we have $|\Re z| \geq \lambda$ and we can use (\ref{qikpropseqana}) and the non-decrease of $G(t)$ and $tH(t)$ to find
$$\frac{e^{-ixz}\hat{h}_{k}(-z/\lambda)}{\lambda H(|z|)} \ll_{c,\rho} \frac{\exp\left(x/G(\lambda)\right)}{\lambda H(\lambda)}.
$$
The only requirement is that $-z/\lambda$ must belong to $\Lambda_{k,c}$, but this is implied by $c \log(k+ 1) \leq \lambda G(\lambda)$, see also \eqref{eqqikcondnegpart}. This concludes the proof of Theorem \ref{thrfrsop} for all ${\widetilde{\mathcal{T}}_{\ast}}$.

\subsection{Concluding remarks} \label{qikremopti}
We conclude this Section by providing the corresponding optimality statement in Theorem \ref{coran} and end with a few comments how the rates in Theorem \ref{thrfrsop} compare with those of Theorem \ref{thrikimain} and a discussion of the main innovations in the proof of Theorem \ref{thrfrsop}.

\begin{corollary} \label{coroptian} Let $M,K, W \colon\R_+\to(0,\infty)$ be non-decreasing functions and assume that $M$ and $K$ are continuous. Suppose further that there exists a constant $C$ such that 
\begin{equation} \label{eqopticondan} K(t) \leq \exp\left(\exp(CtM(t))\right), \quad t \rightarrow \infty.
\end{equation}
\label{eq:result}If $|S(x)| \ll_{S,W} W(x)\inv$, $x\to\infty$, for every Lipschitz continuous function $S\colon\R_+\to\R$ such that the Laplace transform of $S$ extends analytically to the region $\Omega_M$, defined in \eqref{eq:defomega}, and is bounded by $K(|z|)/(1+|z|)$ there, then
 \begin{equation}
 W(x)\ll_{W,M,K} M_K\inv(x),\quad x\to\infty,
 \end{equation}
 where $M_K\inv$ is the inverse function of $M_K(t) = M(t) \left(\log t + \log K(t)\right)$.
\end{corollary}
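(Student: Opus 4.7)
The plan is to derive the corollary as a direct specialization of the general optimality result, Theorem \ref{thrfrsop}, applied to the class $\widetilde{\mathcal{T}}_{\mathrm{An}}(M,H)$ with $H(t) := K(t)/(1+t)$, together with the constant Tauberian weight $f \equiv f_0 \geq 1$. First I would check the hypotheses: the constant function $f_0$ is differentiable, non-decreasing, satisfies $|f'/f|=0\ll x^{-\varepsilon}$, $\log f_0 = O(1) \ll x^{1-\varepsilon}$, and $f_0 \gg 1$; the function $M$ is continuous, non-decreasing with $M(0)>0$; and the monotonicity requirement $tH(t) = tK(t)/(1+t)$ being non-decreasing holds since both $K$ and $t/(1+t)$ are non-decreasing. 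Every continuously differentiable function $S$ with $|S'(x)|\leq f_0$ is Lipschitz, so the bound $|S(x)| \ll_{S,W} 1/W(x)$ assumed in the corollary implies the hypothesis of Theorem \ref{thrfrsop}.

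Thus, for any fixed $c>0$, Theorem \ref{thrfrsop} yields
\begin{equation*}
W(x) \;\ll_{f_0,M,K,W}\; \inf_{\lambda\geq 1}\left(\frac{\lambda}{f_0} + \frac{(1+\lambda)\exp\!\bigl(x/M(\lambda)\bigr)}{\lambda\, K(\lambda)}\right),
\end{equation*}
where the infimum ranges over those $\lambda\geq 1$ satisfying $c\log x \leq \lambda M(\lambda)$. I would then make the key choice $\lambda = M_K^{-1}(x)$; this means $M(\lambda)\bigl(\log\lambda + \log K(\lambda)\bigr) = x$, equivalently $\exp(x/M(\lambda)) = \lambda K(\lambda)$, so the second term of the infimum collapses to $(1+\lambda)/\lambda \leq 2$, giving $W(x) \ll \lambda = M_K^{-1}(x)$, as required.

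The main obstacle is verifying that this choice of $\lambda$ lies in the admissible range, that is, $c \log x \leq \lambda M(\lambda)$ with $\lambda = M_K^{-1}(x)$. This is precisely the step where the double-exponential growth restriction \eqref{eqopticondan} is used. From $K(\lambda) \leq \exp(\exp(C\lambda M(\lambda)))$ we get $\log K(\lambda) \leq \exp(C\lambda M(\lambda))$, hence
\begin{equation*}
x \;=\; M(\lambda)\bigl(\log\lambda + \log K(\lambda)\bigr) \;\leq\; 2 M(\lambda)\exp\!\bigl(C\lambda M(\lambda)\bigr)
\end{equation*}
for sufficiently large $\lambda$, which in turn gives $\log x \leq \log(2M(\lambda)) + C\lambda M(\lambda) \ll \lambda M(\lambda)$ as $\lambda \to \infty$. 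Since $\lambda = M_K^{-1}(x) \to \infty$ with $x$, the constraint $c\log x \leq \lambda M(\lambda)$ holds for all sufficiently large $x$, and the earlier values of $x$ can be absorbed into the implicit constant. This completes the proof plan.
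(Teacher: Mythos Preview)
Your proposal is correct and follows exactly the paper's own argument: apply Theorem \ref{thrfrsop} with $f\equiv 1$, $G=M$, $H(t)=K(t)/(1+t)$, choose $\lambda=M_K^{-1}(x)$, and use \eqref{eqopticondan} to verify the admissibility constraint $c\log x\leq \lambda M(\lambda)$. One harmless arithmetic slip: with $\exp(x/M(\lambda))=\lambda K(\lambda)$ the second term $\frac{(1+\lambda)\exp(x/M(\lambda))}{\lambda K(\lambda)}$ equals $1+\lambda$, not $(1+\lambda)/\lambda$; the conclusion $W(x)\ll \lambda/f_0 + (1+\lambda)\ll \lambda = M_K^{-1}(x)$ is unaffected.
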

The corollary follows from Theorem \ref{thrfrsop} by taking $f(x) = 1$ and ${\widetilde{\mathcal{T}}_{\ast}} ={\widetilde{\mathcal{T}}_{\mathrm{An}}}$ with $G(t) = M(t)$ and $H(t) = K(t)/(1+t)$. One chooses $\lambda = M_{K}\inv(x)$ in \eqref{eqresultopti} and the hypothesis \eqref{eqopticondan} guarantees that one may pick $c > 0$ (depending on $M$ and $K$, but independent of $x$ and $\lambda$) such that the restriction $c\log x \leq \lambda G(\lambda)$ in the definition of $\widetilde{E}_{\mathrm{An}}$ is fulfilled with this choice for $\lambda$ as $x$ tends to $\infty$. The assumption \eqref{eqopticondan} is much more general than the corresponding hypothesis $M_{K}(x) \ll \exp(Cx)$ for some $C$ from \cite[Th. 2.3]{DebruyneSeifert2}, which was the previous best optimality result. In particular, this shows that the $M_K^{-1}$-estimate in Theorem \ref{coran} cannot be improved in case $M = K$ for all non-decreasing functions $M$.

\begin{enumerate}
\item 
The underlying reason we were able to relax the restriction $M_{K}(x) \ll \exp(Cx)$ from \cite[Th. 2.3]{DebruyneSeifert2} lies in the exploited region of analyticity of the functions $\hat{h}_{k}$. In \cite{DebruyneSeifert2} one essentially only used that $\hat{h}_{k}$ is analytic in the strip $\{z : |\Im z| \leq c^{-1}/\log(k+1)\}$, see \cite[Eq. 2.12]{DebruyneSeifert2}, while here we truly take advantage of the analytic continuation of $\hat{h}_{k}(z)$ to $\{z: |z| \leq 1\}$, say. This enabled us to weaken the restriction for $\lambda$ that ensured that $S_{k,x,\lambda}$ indeed belonged to $V_1$.  

\item For $\widetilde{\mathcal{T}}_{\mathrm{SA}}$, $\widetilde{\mathcal{T}}_{\mathrm{SAI}}$ and $\widetilde{\mathcal{T}}_{\mathrm{An}}$, the extra factor $\exp(Q(c \log x/ \lambda G(\lambda)))$ in $\widetilde{E}_{\mathrm{SAI}}$ may inhibit the establishment of the optimality of Theorem \ref{thrikimain} for $\mathcal{T}_{\mathrm{SA}}$, $\mathcal{T}_{\mathrm{SAI}}$ and $\mathcal{T}_{\mathrm{An}}$ if the optimal choices for $\lambda$ in \eqref{eqrikires} are growing too slowly to $\infty$ (as $x \rightarrow \infty$), cf. \eqref{eqopticondan}. Ideally one selects $\lambda$ in \eqref{eqresultopti} as if $\exp(Q(c \log x/ \lambda G(\lambda)))$ is not present in $\tilde{E}_{\ast}$ or that $\lambda$ is so large that the size of $Q(c \log x/ \lambda G(\lambda))$ pales in comparison with that of $M(x/G(\lambda))$.

Typically, the relevant growth requirement on $\lambda$ weakens as the sequence $M_n$ strays further from describing analyticity, and dissipates entirely if $M_n$ is in the vicinity of the realm of non-quasianalytic sequences. For example, the sequence $M_n = (n\log (n+1))^n$ is still quasianalytic, but only barely. Its associated function is $M(x) \asymp x/\log x$ while the associated function of $Q_n = M_n/n!$ is $Q(x) \ll \exp(Cx)$ for some $C$. Therefore, for a sufficiently small $c > 0$, one always has $Q(c\log x/ \lambda G(\lambda)) \leq M(x/G(\lambda)) + O_{M_n}(1)$ if $G$ is bounded from below and the dominant term inside the exponential in $\widetilde{E}_{\mathrm{SAI}}$ is always $M(x/G(\lambda))$. Usually this is sufficient to achieve the optimality of Theorem \ref{thrikimain} for $\mathcal{T}_{\mathrm{SAI}}$. 

Namely, if $\lambda G(\lambda) \gg \log x$ then $Q(c\log x/\lambda G(\lambda)) \ll_c 1$ which immediately yields the optimality as now $Q(c\log x/\lambda G(\lambda))$ indeed does not play any role in \eqref{eqresultopti}. On the other hand, if $\lambda G(\lambda) \ll \log x$, the optimal choices for $\lambda$ in \eqref{eqrikires} are then typically tending so slowly to $\infty$ that one has the same rate in \eqref{eqrikires} (with a worse implicit constant) if one replaces the factor $\exp(M(x/G(\lambda)))$ in $E_{\mathrm{SAI}}$ with $\exp(2M(x/G(\lambda)))$.

\item Apart from the restriction in case the boundary assumptions $\mathcal{T}_\ast$ are of quasianalytic nature discussed above and the presence of the factor $(1+\kappa B\inv G(\lambda)\inv \lambda^{-1})\inv$ in the argument of  associated function $M$ in $E_{\mathrm{SAI}}$ which, as we discussed in Section \ref{secest}, is usually insignificant, the two terms in the infimum in \eqref{eqresultopti} are the reciprocals of the ones in \eqref{eqrikires} for the hypotheses $\mathcal{T}_{\mathrm{DifI}}$, $\mathcal{T}_{\mathrm{HCI}}$ and $\mathcal{T}_{\mathrm{SAI}}$. Then, if, say, $E(x,\lambda)$ is a continuous non-decreasing function in $\lambda$ (and therefore its reciprocal $\tilde{E}(x,\lambda)$ is continuous and non-increasing), the infima in both \eqref{eqrikires} and \eqref{eqresultopti} are essentially reached when the two terms in the infima are equal and this happens for the same $\lambda$ in both  \eqref{eqrikires} and \eqref{eqresultopti}. Therefore, under these circumstances Theorem \ref{thrfrsop} truly yields the optimality of the rate in Theorem \ref{thrikimain} for the classes $\mathcal{T}_{\mathrm{DifI}}$, $\mathcal{T}_{\mathrm{HCI}}$ and $\mathcal{T}_{\mathrm{SAI}}$ (if the function $F$ in the statement of Theorem \ref{thrikimain} is differentiable with derivative $f$ satisfying the hypotheses of the Theorem \ref{thrfrsop}).

For the other $\mathcal{T}_{\ast}$ another discrepancy might occur between \eqref{eqrikires} and \eqref{eqresultopti} if the estimate in the $L^{p}$-norm in $\tilde{E}_{\ast}$ does not match the integral in $E_{\ast}$. This generally happens when the bounds for the derivatives of the Laplace transform are close to being integrable. For example, Theorem \ref{thrfrsop} gives that the rate $E_{\mathrm{Dif}}$ is the optimal one in Theorem \ref{thrikimain} for the class $\mathcal{T}_{\mathrm{Dif}}$ if $G$ is of the form $G(t) = \min\{1,t^{-1/q}\}H(t)$ for a non-decreasing function $H$ of positive increase, but not if $G(t) = \min\{1,t^{-1/q}\}$ (with $q$ the conjugate index of $p$). In the latter case Theorem \ref{thrikimain} delivers (with $F(x) = x$) the rate $S(x) \ll_S x^{-N} \log^{1/q} x$ as $x \rightarrow \infty$, while Theorem \ref{thrfrsop} only yields the barrier $x^{-N}$. 

For $\mathcal{T}_{\mathrm{An}}$, a similar comment applies. When the bound $H$ in $\mathcal{T}_{\mathrm{An}}$ does not satisfy $\int^{\lambda}_{-\lambda} H(t) \dif t \asymp \lambda H(\lambda)$, there is a possibility that the estimate from Theorem \ref{thrikimain} is slightly worse than the barrier given in Theorem \ref{thrfrsop}. On the other hand, if the function $G$ governing the shape of the region of analytic continuation, is growing sufficiently fast, one might still be able to reach the barrier from Theorem \ref{thrfrsop}, even if the above asymptotic relation for $H$ fails. For example, in Theorem \ref{coran}, we also reached the $M_K$-estimate \eqref{eqmkest} if only $M(t)/\log^{\beta} t$ was eventually non-decreasing, see also the fourth item of Remark \ref{remrfran}.

 Similar comments apply to $\mathcal{T}_{\mathrm{SA}}$. We just mention that $\widetilde{\mathcal{T}}_{\mathrm{SA}}$ is reasonably compatible with $\mathcal{T}_{\mathrm{SA}}$. As an illustration, let $G_{n}(t) \asymp G(t)^{n}H(t)\min\{1,t^{-1/q}\}$, for non-decreasing $G,H$ and where $H$ is of positive increase. Then all of \eqref{eqaux}, \eqref{eqrfrscogn} and \eqref{eqqiknewcondsa} are fulfilled implying \eqref{eqrikires} generally matches \eqref{eqresultopti}. As before, a slight discrepancy between the two rates might occur if $G_n$ is close to $t^{-1/q}$.

\item We comment now why we only allowed derivatives from $n \geq 1$ onwards for certain $\widetilde{\mathcal{T}}_{\ast}$. This is mainly due to $h_{x,\lambda,-}$, which was defined as the difference between $S_{x,\lambda}$ and $h_{x,\lambda}$ and for which we only have $\hat{h}_{x,\lambda,-}(t) \ll t^{-1}$. Thus, if one were to include $n = 0$ in the definition of $\widetilde{\mathcal{T}}_{\ast}$, one would need to introduce some undesirable extra hypotheses to achieve \eqref{eqresultopti}; for $\widetilde{\mathcal{T}}_{\mathrm{DifI}}$ for example, one requires $G(R) \gg \log R$ just to ensure that the norm $\| \hat{h}_{x,\lambda,-} \|_{\widetilde{\mathcal{T}}_{\mathrm{DifI}}}$ is even finite. 

On the other hand, we can only reach $\hat{h}_{x,\lambda,-}(t) \ll t^{-1}$ primarily because of the discontinuity of $S_{x,\lambda}$ at $0$. It was introduced as we simply defined $S_{x,\lambda}$ to be the multiplication of $h_{x,\lambda}$ with the indicator function of the positive half-axis. If $S_{x,\lambda}$ were to be defined to transition smoother from the $0$-function on the negative half-axis to $h_{x,\lambda}$, one ought to be able to obtain better estimates for $\hat{h}_{x,\lambda,-}(t)$; this way, one might treat the optimality for $\widetilde{\mathcal{T}}_{\ast}$ including $n = 0$, without the need to invoke extra hypotheses. With a smoother transition, one might also weaken the requirements $\| (1+|t|)^{-2}/G(t) \|_{L^{p}} \ll 1$ and $\| (1+|t|)^{-2}/G_n(t) \|_{L^{p}} \ll 1$ for $\widetilde{\mathcal{T}}_{\mathrm{Dif}}$, $\widetilde{\mathcal{T}}_{\mathrm{HC}}$ and $\widetilde{\mathcal{T}}_{\mathrm{SA}}$ respectively. 

Yet, in practice the Ingham-Karamata theorem is not applied if one exploits no regularity (differentiability, analyticity, etc.) of the Fourier transform, nor it is used when the derivatives of the Fourier transform are integrable; we therefore decided not to pursue Theorem \ref{thrfrsop} under this mild generalization for $\widetilde{\mathcal{T}}_{\ast}$. 

\item Compared to the work in \cite{DebruyneSeifert2} which is also based on the open mapping theorem, the proof of our optimality theorem features a number of technical improvements, such as the acquisition of \emph{real-valued} counterexamples, a treatment of \emph{flexible Tauberian conditions}, an exposition for more general hypotheses $\widetilde{\mathcal{T}}_{\ast}$ for the Laplace transform, the insertion of a different family of test functions for assumptions $\widetilde{\mathcal{T}}_{\ast}$ of non-quasianalytic nature and the already mentioned improvement for $\widetilde{\mathcal{T}}_{\mathrm{An}}$ in Corollary \ref{coroptian}.

We also emphasize that the analysis to obtain counterexamples $S$ having support on the positive half-axis is rather different than in \cite{DebruyneSeifert2}. There one essentially first established the existence of a counterexample $S$ with no restrictions on the support and then argued via an edge-of-the-wedge argument that the function $S_{+} = S \chi_{[0,\infty)}$ inherits the desired properties from $S$. However, it was crucial there that the hypotheses for the Laplace transform were of \emph{analytic} nature. For the weaker hypotheses $\mathcal{T}_{\ast}$ treated in this work, it is not necessarily true that for \emph{arbitrary} $\hat{S} \in \widetilde{\mathcal{T}}_{\ast}$, then also $\widehat{S \chi_{[0,\infty)}} \in \widetilde{\mathcal{T}}_{\ast}$ unless $\widetilde{\mathcal{T}}_{\ast} = \widetilde{\mathcal{T}}_{\mathrm{An}}$. Here we must carry out in section \ref{qiksecnegaxis} a delicate estimation of the norm $\|S\|_{\widetilde{\mathcal{T}}_{\ast}}$ for $S$ belonging to a specific family of test functions.
  
\item Theorem \ref{thrfrsop} provides examples $S$ that, in addition to the assumptions of the Tauberian Theorem \ref{thrikimain}, witness some other properties as well, such as the two-sided Tauberian condition, the fact that its Fourier transform is infinitely differentiable or that the derivative $S'$ is continuous. The primary reason we added these extra restrictions were to turn $V_1$ into a Fr\'echet space. However, it should go without say that other properties may be imposed on the counterexample $S$, as long the spaces $V_1$ and $V_2$ continue to be Fr\'echet (or another type of space for which the open mapping theorem holds) after incorporating the extra (semi)norms that these properties entail, the functions $S_{x,\lambda}$ and $S_{k,x,\lambda}$ continue to belong to $V_1$ (or to the completion of $V_1$ with respect to the corresponding modification of the seminorm $\|\cdot\|_{2,\nu_0,\mu_0}$ in \eqref{eqqikopenmap}) and that one can adequately estimate the extra seminorms.   

For example, if one wishes $S$ to be $L^1$, as in \cite{DebruyneSeifert2}, then one has to add the term $\| S\|_{L^1}$ to the definition of the norms \eqref{eqrfrsnorm} for the space $V_1$ (and require that all elements of $V_1$ are $L^1)$. These modified spaces $V_1$ and $V_2$ are Fr\'echet and $S_{k,x,\lambda}$ and $S_{k,x}$ belong to them, but $\| S_{x,\lambda}\|_{L^1}$ and $\|S_{k,x,\lambda}\|_{L^1}$ are only $\ll \lambda^{-1}$---$h$ and $h_k$ are uniformly bounded in $L^1$---and this bound may be larger than the right-hand side of \eqref{eqresultopti} if $f(x) \rightarrow \infty$ and if that right-hand side is larger than $\sqrt{f(x)}$ after optimizing $\lambda$. This is actually an intrinsic barrier as it is easy to see that, for a non-decreasing function $f(x)$ tending to $\infty$ as $x \rightarrow \infty$, all differentiable functions $S \in L^1(0,\infty)$ with $|S'(x)| \leq f(x)$ must satisfy $S(x) = o(\sqrt{f(x)})$ as $x \rightarrow \infty$.

Another example is the application for $C_0$-semigroups considered in \cite{DebruyneSeifert1,DebruyneSeifert2}.  There $f(x) = 1$ and one requires that the counterexamples $S$ are bounded and have a derivative $S'$ that is uniformly continuous. These modifications can easily be inserted in the space $V_1$ by adding the $L^{\infty}$-norm to the seminorms \eqref{eqrfrsnorm} topologizing the space $V_1$. The estimation of this extra $L^\infty$-norm does not produce troublesome terms as $\| S_{x,\lambda}\|_{L^\infty} \ll 1$ and $\|S_{k,x,\lambda}\|_{L^\infty} \ll 1$. 

\item Naturally the open mapping argument may be adapted to treat Tauberian conditions $\mathfrak{T}_{m}$ involving \emph{higher-order} derivatives. In case the Tauberian condition only considers finitely many derivatives, it should be possible to alter the construction of \cite[Lemma 2.1]{DebruyneSeifert2} to guarantee that $h_{k}^{(j)} \ll_{c,\rho,m} 1$ for $j \leq m$ and to modify the proof of Theorem \ref{thrfrsop} to this more general setting. In case the Tauberian condition involves infinitely many derivatives, the construction of an adequate sequence $h_{k}$ may be more delicate.

\end{enumerate}

\section{Vector-valued functions} \label{secvec}
 
Thus far, we have only considered the case when $S$ was scalar-valued. The main reason for this restriction is paradoxically to gain generality as we were able to treat one-sided Tauberian conditions. It is not immediately obvious what the corresponding generalization for vector-valued functions ought to be, although some appropriate notions are available, see \cite[Ch. 4]{a-b-h-n}. Furthermore, the proofs of most Tauberian theorems concerning functions having values in a Banach space $Y$ can actually be reduced to scalar-valued functions. In this Section we shall briefly sketch a reduction procedure that is not so commonly used in this context. We also mention that an alternative approach for generating the vector-valued Tauberian theorem is to generalize the proof of the scalar-valued one, as has been done often in the literature, see e.g. \cite{b-t, seifert}. \par
Consider the vector-valued function $\textbf{S} : \mathbb{R}_{+} \rightarrow Y$, where $Y$ denotes a Banach space. We shall analyze the scalar-valued function $S_{\textbf{e}}(x) :=\left\langle \textbf{S}(x),\textbf{e}\right\rangle$ for an arbitrary unit vector $\textbf{e} \in Y'$, the dual space of $Y$. If $\textbf{S}$ satisfies an acceptable Tauberian condition and its Laplace transform admits certain boundary behavior, one may apply the scalar-valued theorem to $S_{\textbf{e}}$ to obtain 
\begin{equation}
 \left\langle  \textbf{S}(x)  , \textbf{e}\right\rangle \leq C E(x), \ \ \ x \geq X,
\end{equation}
for a certain constant $C$ and a certain error function $E(x)$. We remark here that the constant $C$ and function $E(x)$ are in fact \emph{independent} of $\textbf{e}$ as one would estimate the quantities 
\begin{equation}
 \left|\left\langle \hat{\textbf{S}}^{(N)}(t),\textbf{e}\right\rangle\right| \text{ or } \left|\left\langle \hat{\textbf{S}}^{(N)}(t+u) -\hat{\textbf{S}}^{(N)}(t),\textbf{e}\right\rangle\right|
\end{equation}
via
\begin{equation}
 \left\| \hat{\textbf{S}}^{(N)}(t)\right\| \text{ or } \left\| \hat{\textbf{S}}^{(N)}(t+u) -\hat{\textbf{S}}^{(N)}(t)\right\|,
\end{equation}
since $\textbf{e}$ was a unit vector. 

Then, the Hahn-Banach theorem gives that 
\begin{equation}
 \left\|  \textbf{S}(x)\right\| \leq C E(x), \ \ \ x \geq X,
\end{equation}
delivering the vector-valued Tauberian theorem. The generalizations of the concepts ultradifferentiability and H\"older continuity to Banach-space-valued functions should be obvious. 

\appendix

\section{Table of decay rates} \label{rfrappendix}

For convenience of the reader, we perform the optimization in \eqref{eqrikires} for some useful, frequently arising boundary assumptions for $\mathcal{L}\{S;s\}$ under the classical Tauberian condition that $S'(x)$ is bounded (above or below). This corresponds to $f(x) = C$ in Theorem \ref{thrikimain}. We denote $g(t) = \mathcal{L}\{S;it\}$ as the (continuous) extension of $\mathcal{L}\{S;s\}$ to the imaginary axis.  In the table below, the $O$-notations in the left column are to be interpreted as $|t| \rightarrow \infty$ and the ones on the right as $x \rightarrow \infty$, but they are, contrary to the rest of the paper, allowed to depend on all other introduced parameters. 

For the hypotheses within the class $\mathcal{T}_{\mathrm{HC}}$, we always consider $p = \infty$ and the function $\omega(\delta) = \delta^{\beta'}\log^{-\gamma'}(1/\delta)$ with sufficiently large $\delta_0$, where $0 \leq \gamma' \leq 1$ and with the additional restriction that $\gamma' > 0$ if $\beta' = 0$ and $\gamma' < 0$ if $\beta' = 1$. For the class $\mathcal{T}_{\mathrm{SA}}$, we always consider $M_n = n^{\alpha'n}$ with $\alpha' \geq 1$ (and $M_0 = 1$) and $G \equiv 1$. $N$ represents a natural number but all other parameters are real numbers. If no (further) restrictions on a parameter are mentioned, it concerns an arbitrary real number. 

\begin{center}
\begin{longtable}{|p{8.6cm}|p{6.3 cm}|}
\hline
 \textbf{Boundary behavior of $\mathcal{L}\{S;s\}$} & \textbf{Error term}\\
\hline \hline
 $g^{(N)}(t) = O(\left|t\right|^{M} \log^{\gamma} \left|t\right|)$. ($N \geq 1, M > -1)$ & $O(x^{-N/(M+2)} \log^{\gamma/(M+2)} x)$\\
\hline
$g^{(N)}(t) = O(\left|t\right|^{-1} \log^{\gamma} \left|t\right|)$. ($N \geq 1,  \gamma > -1$) & $O(x^{-N}\log^{\gamma +1} x)$\\
\hline

$g^{(N)}(t) = O(e^{C\left|t\right|^{\gamma}})$. ($N \geq 1$, $ \gamma > 0$, $C > 0$) & $O\left(\log^{-\frac{1}{\gamma}} x\right)$
 \\
\hline

$g\in \mathcal{T}_{\mathrm{HC}}$:  $G(t) = \left|t+1\right|^{M}\log^{\gamma}\left|t + 2\right|$. ($M > -1$, $(N,\beta') \neq (0,0)$) & $O(x^{-(N+\beta')/(M+2)}\log^{(\gamma -\gamma')/(M+2)} x)$\\
\hline

$g\in \mathcal{T}_{\mathrm{HC}}$:  $G(t) = \left|t+1\right|^{-1}\log^{\gamma}\left|t + 2\right|$. ($\gamma > -1$, $(N,\beta') \neq (0,0)$) & $O(x^{-N-\beta'}\log^{\gamma -\gamma' +1} x)$\\
\hline

$g\in \mathcal{T}_{\mathrm{SA}}$:  $H(t) = \left|t+1\right|^{M}\log^{\gamma}\left|t + 2\right|$. ($M > -1$, 
$B > 0$) & $O\Big({\exp\Big(-\frac{\alpha'x^{1/\alpha'}}{eB^{1/\alpha'}(M+2)}\Big)} x^{\gamma/\alpha'(M+2)}\Big)$\\
\hline

$g\in \mathcal{T}_{\mathrm{SA}}$:  $H(t) = \left|t+1\right|^{-1}\log^{\gamma}\left|t + 2\right|$. ($\gamma > -1$, 
$B > 0$) & $O\Big({\exp\Big(-\frac{\alpha'x^{1/\alpha'}}{eB^{1/\alpha'}}\Big)} x^{(\gamma+1)/\alpha'}\Big)$\\
\hline
$g\in \mathcal{T}_{\mathrm{SA}}$:  $H(t) = e^{C\left|t\right|^{\gamma}}$. ($B,C,\gamma > 0$) & $O\left(x^{-1/\alpha'\gamma}\right)$\\
\hline

$g \in \mathcal{T}_{\mathrm{An}}$: $G(t) = c^{-1}$, $ H(t) = t^{M} \log^{\gamma}(t + 2)$. ($M > -1$, $c > 0$)  & $O(e^{-cx/(M+2)} x^{\gamma/(M+2)})$\\
\hline
$g \in \mathcal{T}_{\mathrm{An}}$: $G(t) = c^{-1}$, $H(t) = t^{-1} \log^{\gamma}(t + 2)$.
($\gamma > -1, c > 0$). & $O(e^{-cx}x^{\gamma + 1} )$\\
\hline

 \mbox{$g \in \mathcal{T}_{\mathrm{An}} $: $G(t) = c^{-1} \max\{t_0, \log^{\alpha} t\} $,} \mbox{$H(t) = t^{M} \log^{\gamma} (t+2) $}. ($\alpha, c >0$, $M > -1$) & $O\Big({\exp\Big(-\left(\frac{cx}{(M+2)}\right)^{1/(\alpha + 1)}\Big)} \allowbreak x^{\gamma/(M+2)(\alpha + 1)^{2}}\Big)$\\
\hline
\mbox{$g \in \mathcal{T}_{\mathrm{An}}$: $G(t) = c^{-1} \max\{t_0, \log^{\alpha} t\}$,} \mbox{$H(t) = t^{-1} \log^{\gamma} (t+2) $}. ($\alpha, c >0$, $\gamma > -1$) & $O\Big(\exp\left(-(cx)^{1/(\alpha + 1)}\right) x^{(\gamma+1)/(\alpha + 1)^{2}}\Big)$\\
\hline 
$g \in \mathcal{T}_{\mathrm{An}}$: $G(t) = c^{-1} \max\{t_0, \log^{\alpha}(t) \log^{\beta}(\log t)\}$, $H(t) = t^M \log^{\gamma}(t+2)$. ($\alpha > 0, M >  -1$ or $\alpha > 0$, $M = -1$, $\gamma > -1$) (*) & $ { \exp\Big(-\Big(\frac{(\alpha + 1)^{\beta}cx}{(M+2)  \log^{\beta} x} \Big)^{1/(\alpha + 1)}  }\allowbreak {(1 + O(\log \log x/\log x))}
   \Big)  $\\
\hline
\mbox{$g \in \mathcal{T}_{\mathrm{An}}$: $G(t) = c^{-1} \max\{t_0, t^\alpha \log^\beta t\}$,} \mbox{$H(t) = t^M \log^{\gamma} (t+2)$}. ($\alpha > 0$) & $O\left(x^{-1/\alpha} \log^{(\beta +1)/\alpha} x\right)$ \\
\hline
\mbox{$g \in \mathcal{T}_{\mathrm{An}}$: $G(t) = c^{-1} \max\{t_0, t^\alpha\}$,} \mbox{$H(t) = \exp(Ct^{\gamma})$}. ($\alpha \geq 0$, $\gamma > 0$) & $O\left(x^{-1/(\alpha+ \gamma)}\right)$ \\
\hline
\mbox{$g \in \mathcal{T}_{\mathrm{An}}$: $G(t) = c^{-1} \max\{t_0, \exp(Ct^\alpha)\}$,} \mbox{$H(t) = \exp(Ct^{\gamma})$}. ($\alpha > 0$, $\gamma > 0$) & $O\left(\log^{-1/\alpha} x\right)$ \\
\hline
\end{longtable}
\end{center}
(*) The Tauberian theorem gives in principle a better decay rate, but the inverse function that arises after the optimization cannot be approximated as well as in the other examples and we just calculated the main term inside the exponential.

\end{document}